\documentclass[a4paper,11pt,reqno]{article}
\usepackage{relsize}
\usepackage{color}
\usepackage{mathtools}
\usepackage[dvipsnames,table]{xcolor}
\usepackage[noadjust]{cite}

\usepackage{hyperref, enumitem}

\hypersetup{
  colorlinks   = true, 
  urlcolor     = blue, 
  linkcolor    = Purple, 
  citecolor   = red 
}
\usepackage{amsmath,amsthm,amssymb}
\usepackage{xurl}
\hypersetup{breaklinks=true}
\usepackage[margin=2.5cm]{geometry}

\usepackage{graphicx}
\usepackage{algorithm}
\usepackage{algorithmic}
\usepackage[T1]{fontenc}
\usepackage{authblk}
\usepackage[english]{babel}
\usepackage{stackrel}

\usepackage[capitalize,noabbrev]{cleveref}

\usepackage{tikz}
\usetikzlibrary{decorations.pathreplacing}
\usetikzlibrary{shapes.geometric,quotes}
\usetikzlibrary{shapes.misc}

\tikzset{cross/.style={cross out, draw=black, minimum size=2*(#1-\pgflinewidth), inner sep=0pt, outer sep=0pt}, 
cross/.default={1pt}}
\hyphenation{parti-cular}

\theoremstyle{definition}
\newtheorem{theorem}{Theorem}[section]

\newtheorem{proposition}[theorem]{Proposition}
\newtheorem{corollary}[theorem]{Corollary}
\newtheorem{lemma}[theorem]{Lemma}
\newtheorem{definition}[theorem]{Definition}
\newtheorem{example}[theorem]{Example}
\newtheorem{remark}[theorem]{Remark}

\newtheorem{conjecture}[theorem]{Conjecture}

\newcommand{\F}{\mathbb F}
\newcommand{\Z}{\mathbb Z}
\newcommand{\LL}{\mathbb L}
\newcommand{\Q}{\mathbb Q}

\newcommand{\mB}{\mathcal B}
\newcommand{\mC}{\mathcal C}
\newcommand{\C}{\mathcal C}
\newcommand{\mD}{\mathcal D}

\newcommand{\mF}{\mathcal F}
\newcommand{\mT}{\mathcal T}
\newcommand{\mS}{\mathcal S}
\newcommand{\mL}{\mathcal L}

\newcommand{\bsig}{\bar{\sigma}}

\newcommand{\gen}[1]{\langle #1\rangle}

\newcommand{\rev}[1]{{#1}}

\DeclareMathOperator{\rk}{rk}
\DeclareMathOperator{\id}{id}
\DeclareMathOperator{\Gal}{Gal}
\DeclareMathOperator{\End}{End}

\begin{document}

\title{A proof of the Etzion-Silberstein conjecture for monotone and MDS-constructible Ferrers diagrams}

\author[1,2]{Alessandro Neri}

\affil[1]{\begin{small}Department of
Mathematics and Applications
``R. Caccioppoli'',  University of Naples Federico II, Italy\end{small}}

\affil[2]{\begin{small}Department of Mathematics: Analysis, Logic and Discrete Mathematics, Ghent University, Belgium\end{small}}

\author[3]{Mima Stanojkovski}

\affil[3]{\begin{small}Department of Mathematics, University of Trento, Italy\end{small}}

\date{}

\maketitle

\begin{abstract}
    Ferrers diagram rank-metric codes were introduced by Etzion and Silberstein in 2009. In their work, they proposed a conjecture on the largest dimension of a space of matrices over a finite field whose nonzero elements are supported on a given Ferrers diagram and all have rank lower bounded by a fixed positive integer $d$. Since stated, the Etzion-Silberstein conjecture has been verified in a number of cases, often requiring additional constraints on the field size or on the minimum rank $d$ in dependence of the corresponding Ferrers diagram. As of today, this conjecture still remains widely open.
Using modular methods, we give a constructive proof of the Etzion-Silberstein conjecture for the class of strictly monotone Ferrers diagrams, which does not depend on the minimum rank $d$ and holds over every finite field. In addition, we leverage on the last result to also prove the conjecture for the class of MDS-constructible Ferrers diagrams, without requiring any restriction on the field size.
\newline
\newline
\emph{Keywords}: Rank-metric codes, Ferrers diagrams, Etzion-Silberstein conjecture, monotone diagrams, convex diagrams, MDS-constructible diagrams. 
\newline
2020 \emph{Mathematics Subject Classification}:  11T71, 15A03, 16S35, 94B05, 94B60.
\end{abstract}

\section{Introduction}

Let $\F$ be a finite field and let $1\leq d\leq n$ and $1\leq k\leq n^2$ be integers. Let, moreover, $\F^{n\times n}$ denote the collection of $n\times n$ matrices with coefficients in $\F$. Then the map 
\[
\F^{n\times n}\times \F^{n\times n}\longrightarrow\Z_{\geq 0}, \quad (A,B) \longmapsto \rk(A-B) 
\]
defines a metric on $\F^{n\times n}$. An $[n\times n,k,d]_{\F}$ \emph{rank-metric code} is a $k$-dimensional linear subspace $\mC$ of $\F^{n\times n}$ such that  $d$ is the \emph{minimum rank} among the nonzero elements of $\mC$. 
Rank-metric codes have a long mathematical history \cite{delsarte1978bilinear,gabidulin1985theory} and have become very popular in the field of algebraic coding theory since the discovery of their applications to network coding \cite{silvaetal2008a}.
Thanks to the work of Delsarte \cite{delsarte1978bilinear}, the parameters  of an $[n\times n, k ,d]_{\F}$ rank-metric code are related by the following Singleton-like Bound:
\begin{equation}\label{eq:SLBound}
k\leq n(n-d+1).
\end{equation}
Codes meeting the last bound with equality are called \emph{maximum rank distance codes} (MRDs, for short) and constructions of such codes are known over finite fields and can be found in \cite{delsarte1978bilinear,gabidulin1985theory}. 

If rank-metric codes allow for codes within the entire matrix space $\F^{n\times n}$, the more general \emph{Ferrers diagram rank-metric codes} are rank-metric codes with prescribed support. These were introduced in 2009 by Etzion and Silberstein \cite{etzion2009error} motivated by the application of subspace codes in network coding and arise from subspace codes in a fixed Schubert cell. The study of Ferrers diagram rank-metric codes has been carried on since by various authors \cite{antrobus2019maximal,ballico2015linear,etzion2016optimal,gorla2017subspace,gruica2022rook,liu2019constructions,liu2019several}.

Let $[n]=\{1,\ldots,n\}$ and let $\mD\subseteq [n]^2$
be a top-right justified \emph{Ferrers diagram} (cf.\ \cref{def:Ferrers}) outside of which the elements of the code will have entries equal to $0$.
A $[\mD,k,d]_{\F}$ \emph{Ferrers diagram rank-metric code}  is  an $[n\times n,k,d]_{\F}$ rank-metric code $\mC$ such that if $A=(a_{i,j})\in\mC$ and $(x,y)\in[n]^2\setminus\mD$, then $a_{x,y}=0$.
Two immediate examples are: 
\begin{enumerate}[label=$(\arabic*)$]
    \item If $\mD=[n]^2$, then a $[\mD,k,d]_{\F}$ Ferrers diagram rank-metric code  is the same as an $[n\times n,k,d]_{\F}$ rank-metric code. 
    \item If $\mD=\mT_n=\{(i,j): i,j\in[n] \textup{ with } i\leq j\}$, then a $[\mT_n,k,d]_{\F}$ Ferrers diagram rank-metric code is an $[n\times n,k,d]_{\F}$ rank-metric code that is contained in the space of upper triangular $n\times n$ matrices with coefficients over $\F$.
\end{enumerate}
Adding constraints to the support of the matrices in the code clearly influences the relation between its intrinsic parameters. In \cite{etzion2009error}, the upper bound on the dimension given in \eqref{eq:SLBound} is generalized to an explicit upper bound $\nu_{\min}(\mD,d)$ taking into account the prescribed support $\mD$; cf. \eqref{eq:gen_SBbound} for the exact value of $\nu_{\min}(\mD,d)$. For instance:
\begin{enumerate}[label=$(\arabic*)$]
    \item If $\mD=[n]^2$, then $\nu_{\min}(\mD,d)=n(n-d+1)$ recovers the bound in \eqref{eq:SLBound}.
    \item If $\mD=\mT_n$, then $\nu_{\min}(\mathcal{D},d)$ is equal to 
    \[
\nu_{\min}(\mT_n,d)=\sum_{i=1}^{n-d+1}i=\binom{n-d+2}{2}=\frac{(n-d+1)(n-d+2)}{2}.
\]
\end{enumerate}
A $[\mD,\nu_{\min}(\mD,d),d]_{\F}$  Ferrers diagram rank-metric code is called a  \emph{maximum Ferrers diagram code} (MFD, for short): Etzion and Silberstein conjectured that these codes exist for every choice of $d$ and $\mD$, and over every finite field $\F$. This is now known as the  \emph{Etzion-Silberstein conjecture} and, as of today, it is still open in its full generality.
In particular and to the best of our knowledge, not even the case of upper triangular matrices has been tackled completely. The instances of existing $[\mT_n,\nu_{\min}(\mT_n,d),d]_{\F}$ MFD codes that are known to us and that do not impose any restriction on the field $\F$ are the following: 
\begin{itemize}
    \item The case where $d=1$, which consists of all the upper triangular matrices. 
    \item The case where $d=2$, for which an MFD code is given by the subspace of upper triangular matrices where every diagonal sum is zero.
    \item The case where $d=3$, which is settled by \cite[Corollary~3.10]{antrobus2019maximal}. 
    \item The case where $d=n-1$, which is given by \cite[Theorem~5.3]{antrobus2019maximal} (for a constructive proof see \cite[Section~2.4]{antrobus2019phd}).
\end{itemize}
All other cases are settled with the additional assumption that $|\F|\geq n-1$, as shown in \cite{etzion2016optimal}. This is due to the fact that the pair $(\mT_n,d)$ is \emph{MDS-constructible} for every $d$ and $n$; cf. \cref{rem:triangular_MDS}.

\medskip

In this paper, we fill the gap by providing a constructive proof of the Etzion-Silberstein conjecture for the class of MDS-constructible Ferrers diagrams without imposing any condition on the field size; cf. \cref{thm:MDS-constr-column}.
We do this by first proving the conjecture for the subclass of \emph{strictly monotone Ferrers diagrams}; cf \cref{th:strictly-monotone}. These are Ferrers diagrams that do not allow two nonempty columns to have the same number of elements,  and include therefore the upper triangular Ferrers diagrams $\mT_n$ as a special subclass. By adjunction, we also prove the Etzion-Silberstein conjecture for \emph{initially convex} Ferrers diagrams; cf. \cref{def:strictmonotone_initiallyconvex}. 

We conclude this introduction with a short account of our methods. To produce MFD codes for strictly monotone Ferrers diagrams, we extend Delsarte-Gabidulin constructions of MRD codes, which are realized through the identification of $\F^{n\times n}$ with a skew algebra $\LL[\sigma]$, where $\LL$ is an auxiliary cyclic field extension of $\F$ of degree $n$ with Galois group generated by $\sigma$.
In \cref{thm:monotone_representation}, for a given finite field $\F$ of characteristic $p$ and $n=p^m$, we determine a skew algebra representation of the space of matrices supported on a Ferrers diagram $\mD\subseteq [n]^2$, when $\mD$ is \emph{$p$-monotone}; cf. \cref{def:p-monotone}. The crucial properties used are that $\LL$ has degree $p^m$ over $\F$ and that $\sigma-\id$ is a nilpotent $\F$-linear endomorphism of $\LL$. \rev{We subsequently employ \cref{thm:monotone_representation}} to prove the Etzion-Silberstein conjecture for $p$-monotone and $p$-convex Ferrers diagrams over finite fields of characteristic $p$ and where $n$ is a  power of $p$; cf. \cref{thm:MFD-monotone}.
For strictly monotone (and thus initially convex) Ferrers diagrams, we are able to move away from the modular case by showing that the property of being strictly monotone is preserved by embeddings.
Finally, we further extend our results to the class of MDS-constructible Ferrers diagrams, relying on the combinatorial features of their diagonals.

\bigskip
\noindent
\textbf{Organization of the paper.} In \cref{sec:notation} we set the notation for the whole paper, properly define Ferrers diagram codes and state the Etzion-Silberstein conjecture in its full generality. Additionally, we define MDS-constructible and monotone Ferrers diagrams as well as the class of $p$-monotone Ferrers diagrams and their adjoints. In \cref{sec:modular}, we concentrate on the case where $n=p^m$ is a power of the characteristic $p$ and, given a field extension $\LL$ of $\F$ of degree $n$, we study the symmetries of a natural full flag in $\LL$. We exploit the results of \cref{sec:modular} in \cref{sec:ferrers}, where we prove the Etzion-Silberstein conjecture for $p$-monotone Ferrers diagrams over finite fields of characteristic $p$. As corollaries we derive that the Etzion-Silberstein conjecture holds true over any finite field for the classes of strictly monotone, initially convex, and MDS-constructible Ferrers diagrams.

\bigskip
\noindent
\textbf{Acknowledgements.} 
Alessandro Neri is supported by the
Research Foundation--Flanders (FWO) grant 12ZZB23N.
Mima Stanojkovski is funded by the Italian program Rita Levi Montalcini, Edition 2020. This work was partially supported by Indam-GNSAGA (Italy).
They thank Ren\'e Schoof for a short proof of \cref{prop:absorbing} and the anonymous referees for their helpful comments.

\bigskip
\noindent
\textbf{Declaration of competing interest.}
The authors declare that they have no known competing financial interests or personal
relationships that could have appeared to influence the work reported in this paper.

\section{Notation and preliminaries}\label{sec:notation}

Let $n$ be a positive integer and let $\F$ be a field with a cyclic Galois extension $\LL$ of degree $n$. Fixing a generator $\sigma$ of the Galois group $\Gal(\LL/\F)$, the $\F$-algebra $(\End_\F(\LL),+,\circ)$ can be represented as a skew group algebra $(\LL[\sigma],+,\cdot)$. This is defined as the set
$$\LL[\sigma]=\left\{\sum_{i=0}^{n-1} a_i \sigma^i \,:\, a_i \in \LL\right\}$$
 endowed with usual addition, while the multiplication is defined on monomials via
$$(a\sigma^i)\cdot(b\sigma^j)=a\sigma^i(b)\sigma^{i+j}, \qquad \mbox{ for } a,b \in \LL,$$
and  extended by distributivity to $\LL[\sigma]$. The map 
$\LL[\sigma]\rightarrow\End_\F(\LL)$ given by 
\begin{equation}\label{eq:F-iso}
\begin{array}{ccc}
    \LL[\sigma] & \longrightarrow & \End_\F(\LL) \\
    \sum\limits_{i=0}^{n-1}a_i\sigma^i & \longmapsto & \Big(\alpha \longmapsto \sum\limits_{i=0}^{n-1}a_i\sigma^i(\alpha) \Big) 
\end{array}
\end{equation}
 is an $\F$-algebra isomorphism as a consequence of Artin's Theorem of linear independence of characters.
By choosing any $\F$-basis $\mB$ of $\LL$, one obtains an induced $\F$-algebra isomorphism 
\begin{equation}\label{eq:matrix_isomorphism}
\rev{\phi_{\mathcal{B}}: }\ \LL[\sigma] \ \ \longrightarrow\ \  \F^{n\times n},\end{equation}
\rev{where the columns of the matrix represent the images of the basis vectors, per usual Linear Algebra practice (in contrast with the common coding-theoretic costume of using row vectors)}. 
This representation lies at the heart of the most prominent constructions of families of optimal rank-metric codes, as we will see later in \cref{sec:rank-metric}.

Throughout, we identify every element $p(\sigma)\in \LL[\sigma]$ with its image in $\End_\F(\LL)$ under \eqref{eq:F-iso}: we denote its evaluation in $\alpha \in \LL$ by $p(\sigma)(\alpha)$. In addition, the \textbf{$\sigma$-degree} (or simply \textbf{degree}) of a nonzero element  $p(\sigma)=a_0\id+a_1\sigma+\ldots+a_{n-1}\sigma^{n-1}\in\LL[\sigma]$ is defined in the usual way as 
$$\deg p(\sigma)=\max\{ i\in\{0,\ldots,n-1\} \,:\,a_i\neq 0\}.$$ 
Artin's theorem of linear independence of characters ensures then that, for any nonzero $\sigma$-polynomial $p(\sigma)=a_0\id+a_1\sigma+\ldots+a_{n-1}\sigma^{n-1}\in\LL[\sigma]$, the following holds:
\begin{equation}\label{eq:bound_degree}
 \dim_\F(\ker p(\sigma))\leq \deg p(\sigma).
\end{equation}
This last statement can be found for instance in \cite[Theorem 5]{gow2009galois}, although it can also be derived from earlier works; see e.g.\ \cite{delsarte1978bilinear,guralnick1994invertible}.

\subsection{Ferrers diagram rank-metric codes}\label{sec:rank-metric}

Rank-metric codes were first introduced by Delsarte in \cite{delsarte1978bilinear} and then reintroduced by Gabidulin in \cite{gabidulin1985theory}. They have gained in popularity thanks to their application to network coding \cite{silvaetal2008a} and for further applications in \rev{distributed storage \cite{blaum2013partial} and cryptography \cite{gabidulin1991ideals}.}

\begin{definition}
Let $\F$ be a field and $k,m,n,d$ positive integers. An $[n\times m,k,d]_\F$ \textbf{rank-metric code}  is a $k$-dimensional $\F$-subspace $\C$ of $\F^{n\times m}$, where $d=\min\{\rk(A) : A\in \C\setminus \{0\}\}$ is called the \textbf{minimum rank distance} of $\C$. 
\end{definition}

The parameters $k,m,n,d$ of a rank-metric code depend from each other through the well-known Singleton-like bound, originally proved by Delsarte in \cite{delsarte1978bilinear} and reading:
\begin{equation}\label{eq:SingBound}
    k\leq \min\{m(n-d+1), n(m-d+1)\}.
\end{equation}
Codes meeting the bound of \eqref{eq:SingBound} with equality are called \textbf{maximum rank distance codes}, or simply \textbf{MRD codes}. When $\F$ admits a degree $\max\{n,m\}$ cyclic extension field $\LL$, there is a well-known construction of MRD codes for every choice of $d,n,m$ with $1\leq d \leq \min\{m,n\}$, which is due to Delsarte \cite{delsarte1978bilinear} for finite fields and to Guralnick \cite{guralnick1994invertible} for general fields. Given $\sigma$ a generator of $\Gal(\LL/\F)$, this construction goes as follows.

If $n=m$, then the $\LL$-subspace 
$$\LL[\sigma]_{n-d}=\left\{\sum_{i=0}^{n-d} a_i \sigma^i\,:\, a_i \in \LL \right\}\subseteq \LL[\sigma].$$
has $\F$-dimension $n(n-d+1)$ and can be seen as an $[n\times n,n(n-d+1),d]_{\F}$ code, due to the isomorphism in \eqref{eq:matrix_isomorphism}. Furthermore, by \eqref{eq:bound_degree} it is clear that the minimum rank distance of $\C$ is $d$, implying that $\LL[\sigma]_{n-d}$ is an MRD code. 

If $m<n$, then by the last case we know that $\LL[\sigma]_{m-d}$ is an $[n\times n, n(m-d+1),d+n-m]_\F$ MRD code. Projecting $\LL[\sigma]_{m-d}$ to $\F^{n\times m}$ by erasing any set of $n-m$ columns, we then obtain an $[n\times m,n(m-d+1),d]_\F$ MRD code. 

If $m>n$, one reduces to the previous case by interchanging the values $n$ and $m$ and transposing the matrices. 

\medskip 

In 2009, motivated by the application of subspace codes to network coding, Etzion and Silberstein initiated the study of Ferrers diagram rank-metric codes \cite{etzion2009error}. These are rank-metric codes whose elements have entries which are identically zero outside of a given Ferrers diagram, and arise from subspace codes in a fixed Schubert cell.

  \begin{definition}\label{def:Ferrers}
     A (top-right justified) \textbf{Ferrers diagram of order $n$}  is 
      a subset $\mD$ of $[n]^2$
     satisfying the following properties:
     \begin{itemize}
     \item If $(i,j)\in \mD$ and $j' \in \{j,\ldots,n\}$, then $(i,j')\in \mD$.
     \item If $(i,j)\in\mD$ and $i'\in\{1,\ldots,i\}$, then $(i',j)\in\mD$.
     \end{itemize}
    \end{definition}
    
    Every Ferrers diagram $\mD$ can be graphically represented as a top-right justified grid of dots. We will often use this representation in order to help the reader graphically catch the features of the Ferrers diagrams that we will deal with in this paper. For compactness, however, we will also identify a Ferrers diagram $\mD$ with the vector $(c_1,\ldots,c_n)$, where $c_i\in\{0,\ldots,n\}$ is the number of dots  in the column $i$ of the grid: in view of \cref{def:Ferrers} it then holds that $0\leq c_1\leq\ldots\leq c_n\leq n$. 

    \begin{remark}
We have already seen that, if a Ferrers diagram is given as a subset of $[n]^2$, then its vector representation $(c_1,\ldots,c_n)$ is obtained as
\[
c_j= |\{i\in [n]\, :\,(i,j)\in \mD \}|.
\]
Conversely, if $\mD$ is given by a vector $(c_1,\ldots,c_n)$, then its representation as a subset of $[n]^2$ is derived as
\[
\mD=\{(i,j)\in[n]^2\, :\, i\leq c_j\}.
\]
These maps are inverses to each other and we will use the two representations of a Ferrers diagram interchangeably. 
    \end{remark}

   \begin{example}\label{ex:fer5}
        The Ferrers diagram of order $5$ given by $\mD=(0,1,1,4,5)$ is graphically represented by the following dotted grid. 
        
  \bigskip 
            \centering
                    \begin{tikzpicture}[scale=0.5]
\draw[help lines, very thick, white, fill=blue!10] (0,1) -- (0,-4) -- (5,-4)--(5,1)--(0,1);
\draw[fill=black] (1.5,0.5) circle (0.1cm);
\draw[fill=black] (2.5,0.5) circle (0.1cm);
\draw[fill=black] (3.5,0.5) circle (0.1cm);
\draw[fill=black] (4.5,0.5) circle (0.1cm);
\draw[fill=black] (3.5,-0.5) circle (0.1cm);
\draw[fill=black] (4.5,-0.5) circle (0.1cm);
\draw[fill=black] (3.5,-1.5) circle (0.1cm);
\draw[fill=black] (4.5,-1.5) circle (0.1cm);
\draw[fill=black] (3.5,-2.5) circle (0.1cm);
\draw[fill=black] (4.5,-2.5) circle (0.1cm);
\draw[fill=black] (4.5,-3.5) circle (0.1cm);
\end{tikzpicture}
    \end{example}

    There is a natural notion of adjunction for Ferrers diagrams, which corresponds to the conjugate partition of an integer. 

    \begin{definition}
        Let $\mD$ be a Ferrers diagram of order $n$. Then, the \textbf{adjoint Ferrers diagram}   $\mD^\top$ is defined as 
        $$ \mD^\top=\{(n+1-j,n+1-i) \,:\, (i,j) \in \mD \}.$$
       
    \end{definition}
 In other words, the adjoint Ferrers diagram is obtained by transposing the indices of the original Ferrers diagram with respect to the main antidiagonal. We remark that, if $\mD$ is a Ferrers diagram, then $\mD^\top$ is the Ferrers diagram obtained from the reflection, with respect to the main antidiagonal, of a the graphical representation of $\mD$. This is illustrated in the following example.
       \begin{example}\label{ex:fer5^perp}
        The adjoint $\mD^\top$ of the Ferrers diagram $\mD=(0,1,1,4,5)$ from \cref{ex:fer5} is $\mD^\top=(1,2,2,2,4)$ and is graphically represented by the following. 
        
  \bigskip 
            \centering
                    \begin{tikzpicture}[scale=0.5]
\draw[help lines, very thick, white, fill=blue!10] (0,1) -- (0,-4) -- (5,-4)--(5,1)--(0,1);
\draw[fill=black] (0.5,0.5) circle (0.1cm);
\draw[fill=black] (1.5,0.5) circle (0.1cm);
\draw[fill=black] (2.5,0.5) circle (0.1cm);
\draw[fill=black] (3.5,0.5) circle (0.1cm);
\draw[fill=black] (4.5,0.5) circle (0.1cm);
\draw[fill=black] (0.5,0.5) circle (0.1cm);
\draw[fill=black] (1.5,-0.5) circle (0.1cm);
\draw[fill=black] (2.5,-0.5) circle (0.1cm);
\draw[fill=black] (3.5,-0.5) circle (0.1cm);
\draw[fill=black] (4.5,-0.5) circle (0.1cm);
\draw[fill=black] (4.5,-1.5) circle (0.1cm);
\draw[fill=black] (4.5,-2.5) circle (0.1cm);
\end{tikzpicture}
    \end{example}

    For a given Ferrers diagram $\mD=(c_1,\ldots,c_n)$ of order $n$, denote by $\F^{\mD}$ the space of $n\times n$ matrices over $\F$ supported on $\mD$, that is,
    $$ \F^{\mD}=\{A=(a_{i,j})\in\F^{n\times n} \,:\, i>c_j \textup{ implies } a_{i,j}=0\}\subseteq \F^{n\times n}.$$

    \begin{example}
        If $\mD=(1,2,3,\ldots,n)$, then $\F^\mD$ is nothing else than the collection of upper triangular $n\times n$ matrices over $\F$. In this case, $\mD$ satisfies $\mD=\mD^\top$. 
    \end{example}

   \begin{definition}
       Let $\mD$ be a Ferrers diagram of order $n$. A $[\mD,k,d]_{\F}$ \textbf{Ferrers diagram rank-metric code} is a $k$-dimensional $\F$-subspace $\mC$ of $\F^\mD$ endowed with the rank distance. The parameter $d$ is the \textbf{minimum rank distance} of $\mC$ and is defined as
       $$d=\min\{\rk (A) \,:\, A \in \mC, A\neq 0\}.$$
   \end{definition}
    In their seminal paper \cite{etzion2009error}, Etzion and Silberstein showed the following relation between the parameters of a $[\mD,k,d]_{\F}$ code $\mC$, relying on the classical argument that $\mC$ trivially intersects any subspace of $\F^\mD$ whose elements have rank at most $d-1$. 

    \begin{proposition}[{\cite[Theorem 1]{etzion2009error}}]\label{prop:bound_Ferrers} 
     Let $\mD=(c_1,\ldots,c_n)$ be a Ferrers diagram of order $n$, and let $\mC$ be a $[\mD,k,d]_{\F}$ code. Then
     $$ k\leq\rev{ \min \left\{\sum_{i=1}^{n-j}\max\{0,c_i-d+1+j\}\ :\ j\in\{0,\ldots,d-1\}\right\}}. $$
    \end{proposition}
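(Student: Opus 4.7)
The plan is to prove the bound by a Singleton-type puncturing argument carried out once for each $j \in \{0, \ldots, d-1\}$. For every such $j$, I would locate a ``south-west rectangle'' $R_j$ of matrix positions on which $\mC$ embeds injectively, then count the dots of $\mD$ lying in $R_j$, and finally take the minimum of these counts over $j$.

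Concretely, fix $j \in \{0, \ldots, d-1\}$ and set
\[
R_j = \{(i,\ell) \in [n]^2 : i \geq d-j \text{ and } \ell \leq n-j\},
\]
a rectangle of shape $(n-d+1+j) \times (n-j)$. Consider the restriction map $\pi_j$ sending a matrix in $\F^{n\times n}$ to its sub-matrix indexed by $R_j$, equivalently, the map that deletes the top $d-1-j$ rows and the last $j$ columns. For any nonzero $A \in \mC$ one has $\rk(A) \geq d$, and since $\pi_j(A)$ is obtained from $A$ by removing $d-1-j$ rows and $j$ columns in total, its rank is at least $d - (d-1) = 1$; in particular $\pi_j(A) \neq 0$. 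Thus $\pi_j|_{\mC}$ is injective, and so $\dim_\F \mC$ is bounded by the dimension of $\pi_j(\F^{\mD})$, which equals the number $|\mD \cap R_j|$ of dots of $\mD$ lying in $R_j$.

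Finally, because $\mD$ is top-right justified, the dots of column $\ell$ of $\mD$ are exactly $(1,\ell), \ldots, (c_\ell, \ell)$; those that additionally lie in $R_j$ require $i \geq d-j$, and so they number $\max(0, c_\ell - (d-1-j)) = \max(0, c_\ell - d + 1 + j)$ when $\ell \leq n-j$ and zero otherwise. Summing over $\ell \in \{1, \ldots, n-j\}$ yields the bound for that value of $j$, and taking the minimum over $j$ produces the asserted inequality. The argument is essentially combinatorial and I foresee no real technical obstacle; the only delicate point is choosing the correct split $d-1 = (d-1-j) + j$ of the total number of deleted rows and columns, which is precisely what produces the $d$ bounds indexed by $j$ over which one minimizes.
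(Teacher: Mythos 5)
Your argument is correct and is essentially the classical one the paper alludes to (and attributes to Etzion--Silberstein): your claim that the puncturing map $\pi_j$ is injective on $\mC$ is the dual formulation of the paper's remark that $\mC$ trivially intersects the subspace of $\F^{\mD}$ supported on the deleted $d-1-j$ rows and $j$ columns, whose elements all have rank at most $d-1$, and your rectangle $R_j$ is exactly the paper's set $\mS_{n,d,j}$. The dot count and the minimization over $j$ match the paper's $\nu_j(\mD,d)=|\mD\cap\mS_{n,d,j}|$, so there is nothing to correct.
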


    The quantity on the right-hand side of the bound in Proposition \ref{prop:bound_Ferrers}, which we denote
    \begin{equation}\label{eq:gen_SBbound}\nu_{\min}(\mD,d)=\rev{\min \left\{\sum_{i=1}^{n-j}\max\{0,c_i-d+1+j\}\ :\ j\in\{0,\ldots,d-1\}\right\}},
    \end{equation}
    represents, informally speaking, the minimum, among all the $j\in\{0,\ldots,d-1\}$ of the number of dots remaining in the Ferrers diagram $\mD$ after removing the first $d-j-1$ rows and the last $j$ columns from $\mD$. From this interpretation, it is immediately clear that $\nu_{\min}(\mD,d)=\nu_{\min}(\mD^\top,d)$. 
    
    \rev{
    To lighten the notation, in the remaining part of the paper, we will write $\nu_j(\mD,d)$ meaning
    \[
    \nu_j(\mD,d)=\sum_{i=1}^{n-j}\max\{0,c_i-d+1+j\}
    \]
    yielding in turn that $\nu_{\min}(\mD,d)=\min\{\nu_j(\mD,d)\ :\ j\in\{0,\ldots,d-1\}\}$.
    }

    \begin{example}
Let $\mD=(0,1,1,4,5)$ be as in \cref{ex:fer5} and set $d=3$.     Then 
    \[
    \nu_{\min}(\mD,d)=\min\left\{\sum_{i=1}^{5-j}\max\{0,c_i-2+j\}\ :\ j\in\{0,1,2\}\right\}=\min\{2+3, 3, 1+1\}=2,
    \]
    which is the same as the minimum among the number of dots remaining after perfoming the following deletions:

      \bigskip 
            \centering
                    \begin{tikzpicture}[scale=0.5]
\draw[help lines, very thick, white, fill=blue!10] (0,1) -- (0,-4) -- (5,-4)--(5,1)--(0,1);
\draw[red, dashed] (0,0.5) -- (5,0.5);
\draw[red, dashed] (0,-0.5) -- (5,-0.5);
\draw (1.5,0.5) node [cross=3.5pt,red,thick] {};
\draw (2.5,0.5) node [cross=3.5pt,red,thick] {};
\draw (3.5,0.5) node [cross=3.5pt,red,thick] {};
\draw (4.5,0.5) node [cross=3.5pt,red,thick] {};
\draw (3.5,-0.5) node [cross=3.5pt,red,thick] {};
\draw (4.5,-0.5) node [cross=3.5pt,red,thick] {};
\draw[fill=black] (1.5,0.5) circle (0.1cm);
\draw[fill=black] (2.5,0.5) circle (0.1cm);
\draw[fill=black] (3.5,0.5) circle (0.1cm);
\draw[fill=black] (4.5,0.5) circle (0.1cm);
\draw[fill=black] (3.5,-0.5) circle (0.1cm);
\draw[fill=black] (4.5,-0.5) circle (0.1cm);
\draw[fill=black] (3.5,-1.5) circle (0.1cm);
\draw[fill=black] (4.5,-1.5) circle (0.1cm);
\draw[fill=black] (3.5,-2.5) circle (0.1cm);
\draw[fill=black] (4.5,-2.5) circle (0.1cm);
\draw[fill=black] (4.5,-3.5) circle (0.1cm);
\end{tikzpicture} \quad \quad
                   \begin{tikzpicture}[scale=0.5]
\draw[help lines, very thick, white, fill=blue!10] (0,1) -- (0,-4) -- (5,-4)--(5,1)--(0,1);
\draw[red, dashed] (0,0.5) -- (5,0.5);
\draw[red, dashed] (4.5,1) -- (4.5,-4);
\draw (1.5,0.5) node [cross=3.5pt,red,thick] {};
\draw (2.5,0.5) node [cross=3.5pt,red,thick] {};
\draw (3.5,0.5) node [cross=3.5pt,red,thick] {};
\draw (4.5,0.5) node [cross=3.5pt,red,thick] {};
\draw (4.5,-0.5) node [cross=3.5pt,red,thick] {};
\draw (4.5,-1.5) node [cross=3.5pt,red,thick] {};
\draw (4.5,-2.5) node [cross=3.5pt,red,thick] {};
\draw (4.5,-3.5) node [cross=3.5pt,red,thick] {};

\draw[fill=black] (1.5,0.5) circle (0.1cm);
\draw[fill=black] (2.5,0.5) circle (0.1cm);
\draw[fill=black] (3.5,0.5) circle (0.1cm);
\draw[fill=black] (4.5,0.5) circle (0.1cm);
\draw[fill=black] (3.5,-0.5) circle (0.1cm);
\draw[fill=black] (4.5,-0.5) circle (0.1cm);
\draw[fill=black] (3.5,-1.5) circle (0.1cm);
\draw[fill=black] (4.5,-1.5) circle (0.1cm);
\draw[fill=black] (3.5,-2.5) circle (0.1cm);
\draw[fill=black] (4.5,-2.5) circle (0.1cm);
\draw[fill=black] (4.5,-3.5) circle (0.1cm);
\end{tikzpicture} \quad \quad
                   \begin{tikzpicture}[scale=0.5]
\draw[help lines, very thick, white, fill=blue!10] (0,1) -- (0,-4) -- (5,-4)--(5,1)--(0,1);
\draw[red, dashed] (3.5,1) -- (3.5,-4);
\draw[red, dashed] (4.5,1) -- (4.5,-4);
\draw (3.5,0.5) node [cross=3.5pt,red,thick] {};
\draw (3.5,-0.5) node [cross=3.5pt,red,thick] {};
\draw (3.5,-1.5) node [cross=3.5pt,red,thick] {};
\draw (3.5,-2.5) node [cross=3.5pt,red,thick] {};
\draw (4.5,0.5) node [cross=3.5pt,red,thick] {};
\draw (4.5,-0.5) node [cross=3.5pt,red,thick] {};
\draw (4.5,-1.5) node [cross=3.5pt,red,thick] {};
\draw (4.5,-2.5) node [cross=3.5pt,red,thick] {};
\draw (4.5,-3.5) node [cross=3.5pt,red,thick] {};
\draw[fill=black] (1.5,0.5) circle (0.1cm);
\draw[fill=black] (2.5,0.5) circle (0.1cm);
\draw[fill=black] (3.5,0.5) circle (0.1cm);
\draw[fill=black] (4.5,0.5) circle (0.1cm);
\draw[fill=black] (3.5,-0.5) circle (0.1cm);
\draw[fill=black] (4.5,-0.5) circle (0.1cm);
\draw[fill=black] (3.5,-1.5) circle (0.1cm);
\draw[fill=black] (4.5,-1.5) circle (0.1cm);
\draw[fill=black] (3.5,-2.5) circle (0.1cm);
\draw[fill=black] (4.5,-2.5) circle (0.1cm);
\draw[fill=black] (4.5,-3.5) circle (0.1cm);
\end{tikzpicture}

    \end{example}
    
    In the same paper \cite{etzion2009error}, the authors also conjectured that over \textbf{finite} fields the bound given in Proposition \ref{prop:bound_Ferrers} is tight for any Ferrers diagram.  This is now known as the \textbf{Etzion-Silberstein conjecture} and is stated below.

    \begin{conjecture}[{\cite[Conjecture 1]{etzion2009error}}]\label{conj:ES}
        Let $\mD=(c_1,\ldots,c_n)$ be a Ferrers diagram of order $n$, and let $d \in \{1,\ldots,n\}$. If $\F$ is a finite field, then there exists a $[\mD,k,d]_{\F}$ code $\mC$ with 
        $$k=\nu_{\min}(\mD,d).$$
    \end{conjecture}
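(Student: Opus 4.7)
The plan is to produce explicit MFD codes as subspaces of the skew algebra $\LL[\sigma]$, refining the Delsarte--Gabidulin construction recalled in \cref{sec:rank-metric}. A full proof of \cref{conj:ES} for arbitrary $\mD$ seems out of reach by any single technique currently available, so I would aim instead to establish the conjecture for structurally tractable subclasses of Ferrers diagrams and then bootstrap to larger classes.

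First, fix a cyclic extension $\LL/\F$ of degree $n$. For each Ferrers diagram $\mD$ of order $n$, the goal is to produce an $\F$-basis $\mB$ of $\LL$ such that the isomorphism $\phi_{\mB}:\LL[\sigma]\to\F^{n\times n}$ sends a concretely describable $\F$-subspace $U\subseteq\LL[\sigma]$ onto $\F^\mD$. The subspace $U$ should be compatible with the $\sigma$-degree filtration so that the intersection $U\cap\LL[\sigma]_{n-d}$ becomes a natural MFD candidate: its $\F$-dimension should match $\nu_{\min}(\mD,d)$ and, thanks to \eqref{eq:bound_degree}, every nonzero element will automatically have rank at least $d$.

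The crucial input is a basis $\mB$ adapted to $\mD$. For this I would exploit a full flag of $\F$-subspaces of $\LL$ stabilised by a distinguished $\F$-linear endomorphism, most naturally $\sigma-\id$ itself. In the modular regime $n=p^m$ with $p=\mathrm{char}\,\F$, this endomorphism is nilpotent and its iterated kernels form a canonical flag
\[
0\subsetneq\ker(\sigma-\id)\subsetneq\ker(\sigma-\id)^2\subsetneq\cdots\subsetneq\LL,
\]
whose jumps I would align with the columns of $\mD$. I expect this to yield the conjecture cleanly for Ferrers diagrams whose column-height pattern is compatible with this flag, in particular for strictly monotone $\mD$, the upper-triangular case $\mD=\mT_n$ being the archetype.

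Two main obstacles stand out. The first is the dimension count: one must verify that the subspace of $\sigma$-polynomials supported in $U$ and of $\sigma$-degree at most $n-d$ has dimension \emph{exactly} $\nu_{\min}(\mD,d)$, which demands careful combinatorial bookkeeping of admissible coefficient positions matched against the minimum value of $\nu_j(\mD,d)$ over $j\in\{0,\ldots,d-1\}$. The second, more serious obstacle, is removing the modular hypothesis. Here I would argue by embedding: a strictly monotone $\mD$ of order $n$ embeds into a strictly monotone diagram of order $p^m$ for any $p^m\geq n$, and any MFD code on the larger diagram restricts to an MFD code on $\mD$; by adjunction, the same strategy should also deliver initially convex diagrams. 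Finally, to reach the MDS-constructible class without any restriction on $|\F|$, I would glue the strictly monotone construction to antidiagonals of $\mD$ filled with classical MDS codes, the monotone backbone absorbing what would otherwise be a field-size obligation.
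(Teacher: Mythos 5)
Your outline for the monotone and strictly monotone part coincides with the paper's argument: the flag $\mF_i=\ker(\sigma-\id)^i$ in the modular case $n=p^m$, the representation of $\F^\mD$ as $\bigoplus_i\mF_{c_i}\bsig^{i-1}$ (\cref{thm:monotone_representation}), the truncation to $\sigma$-degree at most $n-d$ combined with \eqref{eq:bound_degree}, the fact that for ($p$-)monotone diagrams $\nu_{\min}(\mD,d)=\sum_{i=1}^{n-d+1}c_i$ (so the dimension count you flag as an obstacle is immediate; cf.\ \cref{rem:deletion-monotone}), the padding of a strictly monotone diagram by zero columns to reach order $p^m$ (\cref{lem:s-monotone_embed}), and adjunction for the initially convex class. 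Note also that neither your plan nor the paper proves \cref{conj:ES} in full generality; both are confined to these subclasses, so the most one can claim is the conjecture for strictly monotone, initially convex, and MDS-constructible diagrams.

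The genuine gap is in your final step. Filling the diagonals $\mD\cap\Delta_i^n$ with classical MDS codes is precisely \cite[Construction 1]{etzion2016optimal}, and it requires an MDS code of length $|\mD\cap\Delta_i^n|$ and minimum Hamming distance $d$ over $\F$, hence $|\F|\geq \max_i|\mD\cap\Delta_i^n|-1$; asserting that the ``monotone backbone absorbs'' this obligation does not explain how the small-field obstruction disappears, and as written your construction would reintroduce exactly the field-size restriction the theorem is meant to remove. The paper's route uses no MDS codes at all: for an MDS-constructible, $j$-Singleton pair it first shows (\cref{lem:MDScolumn}) that $\mD\cap\mS_{n,d,j}=\mD\cap\mT_{n,d,j}$ and that every diagonal meeting $\mS_{n,d,j}$ contains all of $\Delta_i^n\cap\mL_{n,d,j}$. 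This permits sandwiching a sub-diagram $\mD'\subseteq\mD$ inside the strictly monotone diagram $\mD''=\bigcup_{i\geq\ell}\Delta_i^n$ so that $\nu_{\min}(\mD'',d)=\nu_{\min}(\mD',d)+|\mD''\setminus\mD'|$ and $\nu_{\min}(\mD',d)=\nu_{\min}(\mD,d)$; an MFD code for $\mD''$, supplied by the strictly monotone case, intersected with $\F^{\mD'}$ is then MFD for $\mD'$ and hence for $\mD$ by \cref{prop:MFD-from-old}. You would need to replace your gluing step with an argument of this kind, or otherwise justify how the diagonal MDS construction can be carried out over an arbitrary finite field.
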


    A code meeting the bound from Proposition \ref{prop:bound_Ferrers} with equality is said to be a \textbf{maximum Ferrers diagram (MFD) code}. Thus, the Etzion-Silberstein conjecture (Conjecture \ref{conj:ES}) states that MFD codes exist over any finite field and for every Ferrers diagram of order $n$ and minimum rank distance $1\le d\le n$. Since $d=1$ is a trivial instance of the problem, we will often directly consider $d\geq 2$.

    \medskip

Though Conjecture \ref{conj:ES} is still widely open, there is a number of cases in which it has been confirmed; see  \cite[Section 2.3]{liu2019constructions} for a summary. Two main techniques have been used for this: the construction of MFD codes as subcodes of MRD codes and the concept of MDS-constructibility.  

The first one allows to completely settle the case where $d=2$: see for instance \cite[Theorem~2]{etzion2009error}, \cite[Corollary~19]{gorla2017subspace}), and \cite[Theorem~III.1]{antrobus2019maximal}. 
 For $d\ge 3$, the structure of MRD codes has been exploited in various ways for the construction of MFD codes, often with additional restrictions on the minimum rank $d$ depending on the Ferrers diagram $\mD$. This method can already be found in Etzion and Silberstein's seminal paper \cite{etzion2009error}, and it has been improved and refined ever since (see e.g.\ \cite{antrobus2019maximal,etzion2016optimal,gorla2017subspace,liu2019constructions,liu2019several}). A summary of the MRD-subcodes constructions can be found in \cite[Theorem 2.8]{liu2019several}. 

The second general method uses the theory of MDS codes in the Hamming metric, but only allows to prove Conjecture \ref{conj:ES} over ``large enough'' finite fields and for  Ferrers diagrams with the property of being \emph{MDS-constructible}; cf.\ \cref{def:MDS-constructible}. In the following, for $i\in[n]$, let  
$$\Delta_i^n=\{(j,j+i-1) : j\in[n-i+1]\}$$ 
be the $i$-th diagonal of an $n\times n$ grid. 

\begin{definition}\label{def:MDS-constructible}
 Let $\mD\subseteq [n]^2$ be a Ferrers diagram of order $n$, and let $d\in[n]$ be a positive integer. The pair $(\mD,d)$ is \textbf{MDS-constructible} if 
 $$ \nu_{\min}(\mD,d)=\sum_{i=1}^n\max\{0,|\mD\cap \Delta_i^n|-d+1\}.$$
\end{definition}

We remark that, since $|\Delta_i^n|=n-i+1$, for an MDS-constructible pair $(\mD,d)$ one  has
\begin{equation}\label{eq:MDS-smallsum}
    \nu_{\min}(\mD,d)=\sum_{i=1}^{n-d+1}\max\{0,|\mD\cap \Delta_i^n|-d+1\}.
\end{equation}
The notion   of MDS-constructible pairs was introduced in \cite{antrobus2019maximal}, but it was already shown in \cite[Theorem 7]{etzion2016optimal} that $[\mD,\nu_{\min}(\mD,d),d]_{\F}$ MFD codes exists (and can be explicitly constructed) whenever $|\F|\geq \max \{|\mD\cap \Delta_i^n|-1 \,:\, i\in[n]\}$. The rough idea is to embed MDS codes of length $|\mD\cap \Delta_i^n|$ and Hamming distance $d$ into $\mD\cap \Delta_i^n$ and then take the direct sum of these spaces. For a detailed description of this construction we refer the reader to  \cite[Construction 1]{etzion2016optimal}.  
We remark that a similar diagonal construction for rank metric codes was already given in \cite[Theorem 3]{roth1991maximum}.
Recently, existence results for MFD codes over some MDS-constructible pairs were derived in \cite{gruica2022rook}, with partial improvements on the field size.

\begin{remark}\label{rem:triangular_MDS} It is readily seen that the upper triangular Ferrers diagrams fall into this category. In other words, for positive integers $n$ and $1\leq d\leq n$, if $\mT_n=\{(i,j) \,:\, i,j\in[n], i \le j\}$ denotes the upper-triangular $n\times n$ Ferrers diagram, then
the pair $(\mT_n,d)$ is MDS-constructible.
\end{remark}

In \cref{sec:ferrers} we prove \cref{conj:ES} for the family of MDS-constructible Ferrers diagrams over any finite field. To get rid of the
restriction on the field size, we exploit 
 MRD-subcode constructions. The novelty of our approach is the use of an auxiliary extension field whose degree is a power of the characteristic.

\subsection{Monotone and convex Ferrers diagrams}\label{sec:monotone}

Two special families of Ferrers diagrams which we will focus on, are those of $p$-monotone and $p$-convex Ferrers diagrams, where $p$ is a prime number. We define them both in this section. 

   \begin{definition}
        A Ferrers diagram $\mD=(c_1,\ldots,c_n)$ of order $n$ is called \textbf{monotone} if, whenever $i\in\{1,\ldots,n-1\}$  is such that $0<c_i<n$, one has $c_{i+1} >c_i$.
    \end{definition}

        \begin{example}\label{exa:monotone}
    The following are examples of monotone Ferrers diagrams of order $n=5$.
    $$ \mD=(0,0,1,3,4), \;\; \mD=(1,2,4,5,5),\;\; \mD=(2,3,5,5,5),\;\; \mD=(0,1,4,5,5)$$
    
                    \centering
                           \begin{tikzpicture}[scale=0.5]
\draw[help lines, very thick, white, fill=blue!10] (0,1) -- (0,-4) -- (5,-4)--(5,1)--(0,1);
\draw[fill=black] (2.5,0.5) circle (0.1cm);
\draw[fill=black] (3.5,0.5) circle (0.1cm);
\draw[fill=black] (4.5,0.5) circle (0.1cm);
\draw[fill=black] (3.5,-0.5) circle (0.1cm);
\draw[fill=black] (4.5,-0.5) circle (0.1cm);
\draw[fill=black] (3.5,-1.5) circle (0.1cm);
\draw[fill=black] (4.5,-1.5) circle (0.1cm);
\draw[fill=black] (4.5,-2.5) circle (0.1cm);
\end{tikzpicture}  \quad
                    \begin{tikzpicture}[scale=0.5]
\draw[help lines, very thick, white, fill=blue!10] (0,1) -- (0,-4) -- (5,-4)--(5,1)--(0,1);
\draw[fill=black] (0.5,0.5) circle (0.1cm);
\draw[fill=black] (1.5,0.5) circle (0.1cm);
\draw[fill=black] (2.5,0.5) circle (0.1cm);
\draw[fill=black] (3.5,0.5) circle (0.1cm);
\draw[fill=black] (4.5,0.5) circle (0.1cm);
\draw[fill=black] (1.5,-0.5) circle (0.1cm);
\draw[fill=black] (2.5,-0.5) circle (0.1cm);
\draw[fill=black] (3.5,-0.5) circle (0.1cm);
\draw[fill=black] (4.5,-0.5) circle (0.1cm);
\draw[fill=black] (2.5,-1.5) circle (0.1cm);
\draw[fill=black] (3.5,-1.5) circle (0.1cm);
\draw[fill=black] (4.5,-1.5) circle (0.1cm);
\draw[fill=black] (2.5,-2.5) circle (0.1cm);
\draw[fill=black] (3.5,-2.5) circle (0.1cm);
\draw[fill=black] (4.5,-2.5) circle (0.1cm);
\draw[fill=black] (3.5,-3.5) circle (0.1cm);
\draw[fill=black] (4.5,-3.5) circle (0.1cm);
\end{tikzpicture}         \quad           \begin{tikzpicture}[scale=0.5]
\draw[help lines, very thick, white, fill=blue!10] (0,1) -- (0,-4) -- (5,-4)--(5,1)--(0,1);
\draw[fill=black] (0.5,0.5) circle (0.1cm);
\draw[fill=black] (1.5,0.5) circle (0.1cm);
\draw[fill=black] (2.5,0.5) circle (0.1cm);
\draw[fill=black] (3.5,0.5) circle (0.1cm);
\draw[fill=black] (4.5,0.5) circle (0.1cm);
\draw[fill=black] (0.5,-0.5) circle (0.1cm);
\draw[fill=black] (1.5,-0.5) circle (0.1cm);
\draw[fill=black] (2.5,-0.5) circle (0.1cm);
\draw[fill=black] (3.5,-0.5) circle (0.1cm);
\draw[fill=black] (4.5,-0.5) circle (0.1cm);
\draw[fill=black] (1.5,-1.5) circle (0.1cm);
\draw[fill=black] (2.5,-1.5) circle (0.1cm);
\draw[fill=black] (3.5,-1.5) circle (0.1cm);
\draw[fill=black] (4.5,-1.5) circle (0.1cm);
\draw[fill=black] (2.5,-2.5) circle (0.1cm);
\draw[fill=black] (3.5,-2.5) circle (0.1cm);
\draw[fill=black] (4.5,-2.5) circle (0.1cm);
\draw[fill=black] (2.5,-3.5) circle (0.1cm);
\draw[fill=black] (3.5,-3.5) circle (0.1cm);
\draw[fill=black] (4.5,-3.5) circle (0.1cm);
\end{tikzpicture}               \quad           \begin{tikzpicture}[scale=0.5]
\draw[help lines, very thick, white, fill=blue!10] (0,1) -- (0,-4) -- (5,-4)--(5,1)--(0,1);
\draw[fill=black] (1.5,0.5) circle (0.1cm);
\draw[fill=black] (2.5,0.5) circle (0.1cm);
\draw[fill=black] (3.5,0.5) circle (0.1cm);
\draw[fill=black] (4.5,0.5) circle (0.1cm);
\draw[fill=black] (2.5,-0.5) circle (0.1cm);
\draw[fill=black] (3.5,-0.5) circle (0.1cm);
\draw[fill=black] (4.5,-0.5) circle (0.1cm);
\draw[fill=black] (2.5,-1.5) circle (0.1cm);
\draw[fill=black] (3.5,-1.5) circle (0.1cm);
\draw[fill=black] (4.5,-1.5) circle (0.1cm);
\draw[fill=black] (2.5,-2.5) circle (0.1cm);
\draw[fill=black] (3.5,-2.5) circle (0.1cm);
\draw[fill=black] (4.5,-2.5) circle (0.1cm);
\draw[fill=black] (3.5,-3.5) circle (0.1cm);
\draw[fill=black] (4.5,-3.5) circle (0.1cm);
\end{tikzpicture}   
\end{example}

    \begin{remark}\label{rem:monotonicity}
        Let  $\mD=(c_1,\ldots,c_n)$ be a monotone Ferrers diagram of order $n$, and define $$\ell=\min\{i \,:\, c_i \neq 0\}\ \ \textup{ and }\ \  r=\max\{ i \,:\, c_i\neq n\}.$$ Then, for every $i,j \in \{\ell,\ell+1,\ldots,r+1\}$ with $i<j$, one has $c_j-c_i\geq j-i$.
    \end{remark}

    \begin{remark}\label{rem:monotone_diagonal}
        Let $\mD=(c_1,\ldots,c_n)$ be a monotone Ferrers diagram of order $n$. If $i,j<n$ are such that  $(i,j)\in \mD$, then monotonicity yields $(i+1,j+1)\in \mD$.
    \end{remark}

    \begin{definition}
        A Ferrers diagram $\mD=(c_1,\ldots,c_n)$ of order $n$ is called \textbf{convex} if, for all $i\in\{1,\ldots,n-1\}$, one has that $c_{i+1}-c_i\leq 1$.
    \end{definition}

    \rev{The reader can easily verify that the empty diagram $\mD=\emptyset$ and the full diagram $\mD=[n]^2$ are both monotone and convex.}

           \begin{example}\label{exa:convex}
    The following are examples of convex Ferrers diagrams of order $n=5$.
       $$ \mD=(0,1,2,2,3), \;\; \mD=(2,3,3,4,5),\;\; \mD=(3,3,4,5,5),\;\; \mD=(2,3,3,3,4)$$
 
                    \centering
                           \begin{tikzpicture}[scale=0.5]
\draw[help lines, very thick, white, fill=blue!10] (0,1) -- (0,-4) -- (5,-4)--(5,1)--(0,1);
\draw[fill=black] (1.5,0.5) circle (0.1cm);
\draw[fill=black] (2.5,0.5) circle (0.1cm);
\draw[fill=black] (3.5,0.5) circle (0.1cm);
\draw[fill=black] (4.5,0.5) circle (0.1cm);
\draw[fill=black] (2.5,-0.5) circle (0.1cm);
\draw[fill=black] (3.5,-0.5) circle (0.1cm);
\draw[fill=black] (4.5,-0.5) circle (0.1cm);
\draw[fill=black] (4.5,-1.5) circle (0.1cm);
\end{tikzpicture}  \quad
                    \begin{tikzpicture}[scale=0.5]
\draw[help lines, very thick, white, fill=blue!10] (0,1) -- (0,-4) -- (5,-4)--(5,1)--(0,1);
\draw[fill=black] (0.5,0.5) circle (0.1cm);
\draw[fill=black] (1.5,0.5) circle (0.1cm);
\draw[fill=black] (2.5,0.5) circle (0.1cm);
\draw[fill=black] (3.5,0.5) circle (0.1cm);
\draw[fill=black] (4.5,0.5) circle (0.1cm);
\draw[fill=black] (0.5,-0.5) circle (0.1cm);
\draw[fill=black] (1.5,-0.5) circle (0.1cm);
\draw[fill=black] (2.5,-0.5) circle (0.1cm);
\draw[fill=black] (3.5,-0.5) circle (0.1cm);
\draw[fill=black] (4.5,-0.5) circle (0.1cm);
\draw[fill=black] (1.5,-1.5) circle (0.1cm);
\draw[fill=black] (2.5,-1.5) circle (0.1cm);
\draw[fill=black] (3.5,-1.5) circle (0.1cm);
\draw[fill=black] (4.5,-1.5) circle (0.1cm);
\draw[fill=black] (3.5,-2.5) circle (0.1cm);
\draw[fill=black] (4.5,-2.5) circle (0.1cm);
\draw[fill=black] (4.5,-3.5) circle (0.1cm);
\end{tikzpicture}         \quad           \begin{tikzpicture}[scale=0.5]
\draw[help lines, very thick, white, fill=blue!10] (0,1) -- (0,-4) -- (5,-4)--(5,1)--(0,1);
\draw[fill=black] (0.5,0.5) circle (0.1cm);
\draw[fill=black] (1.5,0.5) circle (0.1cm);
\draw[fill=black] (2.5,0.5) circle (0.1cm);
\draw[fill=black] (3.5,0.5) circle (0.1cm);
\draw[fill=black] (4.5,0.5) circle (0.1cm);
\draw[fill=black] (0.5,-0.5) circle (0.1cm);
\draw[fill=black] (1.5,-0.5) circle (0.1cm);
\draw[fill=black] (2.5,-0.5) circle (0.1cm);
\draw[fill=black] (3.5,-0.5) circle (0.1cm);
\draw[fill=black] (4.5,-0.5) circle (0.1cm);
\draw[fill=black] (0.5,-1.5) circle (0.1cm);
\draw[fill=black] (1.5,-1.5) circle (0.1cm);
\draw[fill=black] (2.5,-1.5) circle (0.1cm);
\draw[fill=black] (3.5,-1.5) circle (0.1cm);
\draw[fill=black] (4.5,-1.5) circle (0.1cm);
\draw[fill=black] (2.5,-2.5) circle (0.1cm);
\draw[fill=black] (3.5,-2.5) circle (0.1cm);
\draw[fill=black] (4.5,-2.5) circle (0.1cm);
\draw[fill=black] (3.5,-3.5) circle (0.1cm);
\draw[fill=black] (4.5,-3.5) circle (0.1cm);
\end{tikzpicture}               \quad           \begin{tikzpicture}[scale=0.5]
\draw[help lines, very thick, white, fill=blue!10] (0,1) -- (0,-4) -- (5,-4)--(5,1)--(0,1);
\draw[fill=black] (0.5,0.5) circle (0.1cm);
\draw[fill=black] (1.5,0.5) circle (0.1cm);
\draw[fill=black] (2.5,0.5) circle (0.1cm);
\draw[fill=black] (3.5,0.5) circle (0.1cm);
\draw[fill=black] (4.5,0.5) circle (0.1cm);
\draw[fill=black] (0.5,-0.5) circle (0.1cm);
\draw[fill=black] (1.5,-0.5) circle (0.1cm);
\draw[fill=black] (2.5,-0.5) circle (0.1cm);
\draw[fill=black] (3.5,-0.5) circle (0.1cm);
\draw[fill=black] (4.5,-0.5) circle (0.1cm);
\draw[fill=black] (1.5,-1.5) circle (0.1cm);
\draw[fill=black] (2.5,-1.5) circle (0.1cm);
\draw[fill=black] (3.5,-1.5) circle (0.1cm);
\draw[fill=black] (4.5,-1.5) circle (0.1cm);
\draw[fill=black] (4.5,-2.5) circle (0.1cm);
\end{tikzpicture}   
\end{example}

We remark that convex Ferrers diagrams are adjoints of monotone Ferrers diagrams and the other way around, as one can see from \cref{exa:monotone} and \cref{exa:convex}.

\begin{definition}\label{def:p-height} 
Let $p$ be a prime number and let $\mD=(c_1,\ldots,c_n)$ be a Ferrers diagram of order $n$. Let $h$ be the largest nonnegative integer with the following properties:
 \begin{enumerate}[label=(\arabic*)]
 \item\label{it:p0} The power $p^h$ divides $n$.
     \item\label{it:p1} For every $i\in\{1,\ldots,n\}$, the power $p^h$ divides $c_i$.
     \item\label{it:p2} If $r,s\in\{0,\ldots,n-1\}$ \rev{are such that there exists  a nonnegative integer $Q$ with the property that $r,s\in\{Qp^h, Qp^h+1, \ldots, (Q+1)p^h-1\}$}, then $c_{n-r}=c_{n-s}$. 
 \end{enumerate}
 The number $h$ is called the \textbf{$p$-height} of $\mD$ and the \textbf{$p$-contraction} of $\mD$ is the Ferrers diagram $\mD^{( p)}=(c_1',\ldots,c_{n/p^{h}}')$  of order $n/p^{h}$ that is defined by 
 $$c_i'=\frac{c_{p^h i}}{p^{h}}, \quad i\in\{1,\ldots, n/p^{h}\}.$$
\end{definition}

We remark that, in the context of \cref{def:p-height}, the value $h=0$ satisfies \ref{it:p0}, \ref{it:p1}, and \ref{it:p2} so the $p$-height of $\mD$ is well defined. 
Moreover, if $\mD$ is a Ferrers diagram of order $n$ of $p$-height $h=0$, then the $p$-contraction of $\mD$ is equal to $\mD$.
Roughly speaking, as the following example suggests, if $h$ is the $p$-height of a Ferrers diagram $\mD$, then $[n]^2$ can be partitioned into blocks of size $p^h\times p^h$ where the intersection of $\mD$ with any block is either full or empty.

\begin{example}\label{exa:2-reduction}
The Ferrers diagram $\mD=(4,4,4,4,8,8,8,8)$ of order $8$ has $2$-height equal to $2$. Its $2$-contraction is the Ferrers diagram $\mD^{(2)}=(1,2)$ of order $2$. 
\medskip
\begin{center}
    \begin{tikzpicture}[scale=0.5]
\draw[help lines, very thick, white, fill=blue!10] (0,1) -- (0,-7) -- (8,-7)--(8,1)--(0,1);
 \draw[help lines, dashed] (4,0.95) -- (4,-6.95);
 \draw[help lines, dashed] (0.05,-3) -- (7.95,-3);
\draw[fill=black] (0.5,0.5) circle (0.1cm);
\draw[fill=black] (1.5,0.5) circle (0.1cm);
\draw[fill=black] (2.5,0.5) circle (0.1cm);
\draw[fill=black] (3.5,0.5) circle (0.1cm);
\draw[fill=black] (4.5,0.5) circle (0.1cm);
\draw[fill=black] (5.5,0.5) circle (0.1cm);
\draw[fill=black] (6.5,0.5) circle (0.1cm);
\draw[fill=black] (7.5,0.5) circle (0.1cm);

\draw[fill=black] (0.5,-0.5) circle (0.1cm);
\draw[fill=black] (1.5,-0.5) circle (0.1cm);
\draw[fill=black] (2.5,-0.5) circle (0.1cm);
\draw[fill=black] (3.5,-0.5) circle (0.1cm);
\draw[fill=black] (4.5,-0.5) circle (0.1cm);
\draw[fill=black] (5.5,-0.5) circle (0.1cm);
\draw[fill=black] (6.5,-0.5) circle (0.1cm);
\draw[fill=black] (7.5,-0.5) circle (0.1cm);

\draw[fill=black] (0.5,-1.5) circle (0.1cm);
\draw[fill=black] (1.5,-1.5) circle (0.1cm);
\draw[fill=black] (2.5,-1.5) circle (0.1cm);
\draw[fill=black] (3.5,-1.5) circle (0.1cm);
\draw[fill=black] (4.5,-1.5) circle (0.1cm);
\draw[fill=black] (5.5,-1.5) circle (0.1cm);
\draw[fill=black] (6.5,-1.5) circle (0.1cm);
\draw[fill=black] (7.5,-1.5) circle (0.1cm);

\draw[fill=black] (0.5,-2.5) circle (0.1cm);
\draw[fill=black] (1.5,-2.5) circle (0.1cm);
\draw[fill=black] (2.5,-2.5) circle (0.1cm);
\draw[fill=black] (3.5,-2.5) circle (0.1cm);
\draw[fill=black] (4.5,-2.5) circle (0.1cm);
\draw[fill=black] (5.5,-2.5) circle (0.1cm);
\draw[fill=black] (6.5,-2.5) circle (0.1cm);
\draw[fill=black] (7.5,-2.5) circle (0.1cm);

\draw[fill=black] (4.5,-3.5) circle (0.1cm);
\draw[fill=black] (5.5,-3.5) circle (0.1cm);
\draw[fill=black] (6.5,-3.5) circle (0.1cm);
\draw[fill=black] (7.5,-3.5) circle (0.1cm);

\draw[fill=black] (4.5,-4.5) circle (0.1cm);
\draw[fill=black] (5.5,-4.5) circle (0.1cm);
\draw[fill=black] (6.5,-4.5) circle (0.1cm);
\draw[fill=black] (7.5,-4.5) circle (0.1cm);

\draw[fill=black] (4.5,-5.5) circle (0.1cm);
\draw[fill=black] (5.5,-5.5) circle (0.1cm);
\draw[fill=black] (6.5,-5.5) circle (0.1cm);
\draw[fill=black] (7.5,-5.5) circle (0.1cm);

\draw[fill=black] (4.5,-6.5) circle (0.1cm);
\draw[fill=black] (5.5,-6.5) circle (0.1cm);
\draw[fill=black] (6.5,-6.5) circle (0.1cm);
\draw[fill=black] (7.5,-6.5) circle (0.1cm);
\end{tikzpicture}  \quad  $\overrightarrow{\ 2\mbox{-contraction }}$ \quad
    \begin{tikzpicture}[scale=0.5]
\draw[help lines, very thick, white, fill=blue!10] (0,1) -- (0,-1) -- (2,-1)--(2,1)--(0,1);
\draw[fill=black] (0.5,0.5) circle (0.1cm);
\draw[fill=black] (1.5,0.5) circle (0.1cm);

\draw[fill=black] (1.5,-0.5) circle (0.1cm);

\end{tikzpicture} 
\end{center}
\end{example}

\begin{definition}\label{def:p-monotone}
Let $p$ be a prime number and let $\mD=(c_1,\ldots,c_n)$ be a Ferrers diagram of order $n$. The diagram $\mD$ is called \textbf{$p$-monotone} if its $p$-contraction is monotone. 
    Similarly, $\mD$ is \textbf{$p$-convex} if its $p$-contraction is convex.
\end{definition}

We remark that, with this new definition, monotone Ferrers diagrams are a special instance of the larger family of $p$-monotone Ferrers diagrams. 

\begin{example} The following are examples of $2$-monotone Ferrers diagrams of order $8$.
$$\mD=(0,0,2,2),\qquad \mD=(2,2,4,4,6,6), \qquad \mD=(0,0,4,4,8,8,8,8,8)$$

   \centering    
   
   \begin{tikzpicture}[scale=0.5]
\draw[help lines, very thick, white, fill=blue!10] (0,1) -- (0,-3) -- (4,-3)--(4,1)--(0,1);

\draw[fill=black] (2.5,0.5) circle (0.1cm);
\draw[fill=black] (3.5,0.5) circle (0.1cm);

\draw[fill=black] (2.5,-0.5) circle (0.1cm);
\draw[fill=black] (3.5,-0.5) circle (0.1cm);

\end{tikzpicture} \;\;\;\qquad 
   \begin{tikzpicture}[scale=0.5]
\draw[help lines, very thick, white, fill=blue!10] (0,1) -- (0,-5) -- (6,-5)--(6,1)--(0,1);
\draw[fill=black] (0.5,0.5) circle (0.1cm);
\draw[fill=black] (1.5,0.5) circle (0.1cm);
\draw[fill=black] (2.5,0.5) circle (0.1cm);
\draw[fill=black] (3.5,0.5) circle (0.1cm);
\draw[fill=black] (4.5,0.5) circle (0.1cm);
\draw[fill=black] (5.5,0.5) circle (0.1cm);

\draw[fill=black] (0.5,-0.5) circle (0.1cm);
\draw[fill=black] (1.5,-0.5) circle (0.1cm);
\draw[fill=black] (2.5,-0.5) circle (0.1cm);
\draw[fill=black] (3.5,-0.5) circle (0.1cm);
\draw[fill=black] (4.5,-0.5) circle (0.1cm);
\draw[fill=black] (5.5,-0.5) circle (0.1cm);

\draw[fill=black] (2.5,-1.5) circle (0.1cm);
\draw[fill=black] (3.5,-1.5) circle (0.1cm);
\draw[fill=black] (4.5,-1.5) circle (0.1cm);
\draw[fill=black] (5.5,-1.5) circle (0.1cm);

\draw[fill=black] (2.5,-2.5) circle (0.1cm);
\draw[fill=black] (3.5,-2.5) circle (0.1cm);
\draw[fill=black] (4.5,-2.5) circle (0.1cm);
\draw[fill=black] (5.5,-2.5) circle (0.1cm);

\draw[fill=black] (4.5,-3.5) circle (0.1cm);
\draw[fill=black] (5.5,-3.5) circle (0.1cm);

\draw[fill=black] (4.5,-4.5) circle (0.1cm);
\draw[fill=black] (5.5,-4.5) circle (0.1cm);

\end{tikzpicture}  \; \qquad 
   \begin{tikzpicture}[scale=0.5]
\draw[help lines, very thick, white, fill=blue!10] (0,1) -- (0,-7) -- (8,-7)--(8,1)--(0,1);

\draw[fill=black] (2.5,0.5) circle (0.1cm);
\draw[fill=black] (3.5,0.5) circle (0.1cm);
\draw[fill=black] (4.5,0.5) circle (0.1cm);
\draw[fill=black] (5.5,0.5) circle (0.1cm);
\draw[fill=black] (6.5,0.5) circle (0.1cm);
\draw[fill=black] (7.5,0.5) circle (0.1cm);

\draw[fill=black] (2.5,-0.5) circle (0.1cm);
\draw[fill=black] (3.5,-0.5) circle (0.1cm);
\draw[fill=black] (4.5,-0.5) circle (0.1cm);
\draw[fill=black] (5.5,-0.5) circle (0.1cm);
\draw[fill=black] (6.5,-0.5) circle (0.1cm);
\draw[fill=black] (7.5,-0.5) circle (0.1cm);

\draw[fill=black] (2.5,-1.5) circle (0.1cm);
\draw[fill=black] (3.5,-1.5) circle (0.1cm);
\draw[fill=black] (4.5,-1.5) circle (0.1cm);
\draw[fill=black] (5.5,-1.5) circle (0.1cm);
\draw[fill=black] (6.5,-1.5) circle (0.1cm);
\draw[fill=black] (7.5,-1.5) circle (0.1cm);

\draw[fill=black] (2.5,-2.5) circle (0.1cm);
\draw[fill=black] (3.5,-2.5) circle (0.1cm);
\draw[fill=black] (4.5,-2.5) circle (0.1cm);
\draw[fill=black] (5.5,-2.5) circle (0.1cm);
\draw[fill=black] (6.5,-2.5) circle (0.1cm);
\draw[fill=black] (7.5,-2.5) circle (0.1cm);

\draw[fill=black] (4.5,-3.5) circle (0.1cm);
\draw[fill=black] (5.5,-3.5) circle (0.1cm);
\draw[fill=black] (6.5,-3.5) circle (0.1cm);
\draw[fill=black] (7.5,-3.5) circle (0.1cm);

\draw[fill=black] (4.5,-4.5) circle (0.1cm);
\draw[fill=black] (5.5,-4.5) circle (0.1cm);
\draw[fill=black] (6.5,-4.5) circle (0.1cm);
\draw[fill=black] (7.5,-4.5) circle (0.1cm);

\draw[fill=black] (4.5,-5.5) circle (0.1cm);
\draw[fill=black] (5.5,-5.5) circle (0.1cm);
\draw[fill=black] (6.5,-5.5) circle (0.1cm);
\draw[fill=black] (7.5,-5.5) circle (0.1cm);

\draw[fill=black] (4.5,-6.5) circle (0.1cm);
\draw[fill=black] (5.5,-6.5) circle (0.1cm);
\draw[fill=black] (6.5,-6.5) circle (0.1cm);
\draw[fill=black] (7.5,-6.5) circle (0.1cm);
\end{tikzpicture}  
\end{example}

The following straightforward result \rev{ensures} that  $p$-convex Ferrers diagrams and $p$-monotone Ferrers diagrams are adjoint to each other. 

\begin{lemma}\label{lemma:adj}
Let $p$ be a prime number and let $\mD=(c_1,\ldots,c_n)$ be a Ferrers diagram of order $n$. \rev{Then the following hold:
\begin{enumerate}[label=$(\arabic*)$]
    \item The diagrams $\mD$ and $\mD^{\top}$ have the same $p$-height.
    \item One has $(\mD^{\top})^{(p)}=(\mD^{(p)})^{\top}$.
    \item The diagram $\mD$ is $p$-monotone if and only if $\mD^\top$ is $p$-convex.
\end{enumerate}

}
\end{lemma}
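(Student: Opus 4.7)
The plan is to interpret the $p$-height geometrically: I will argue that the three conditions \ref{it:p0}--\ref{it:p2} in \cref{def:p-height} are jointly equivalent to $\mD \subseteq [n]^2$ being a union of $p^h \times p^h$ blocks, where the blocks come from the unique partition of $[n]^2$ into $p^h\times p^h$ squares that has $(1,n)$ as the top-right corner of one of them. Condition \ref{it:p0} guarantees that such a partition exists; condition \ref{it:p1} says that every column of $\mD$ has height a multiple of $p^h$, so each nonempty column is a disjoint union of full block-columns stacked from the top; and condition \ref{it:p2} asserts that within each group of $p^h$ consecutive columns aligned from the right, all column heights agree. Together these force each individual $p^h \times p^h$ block of the partition to lie either entirely inside or entirely outside $\mD$.

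Once this geometric reformulation is in place, part \emph{(1)} is essentially automatic: the adjunction $(i,j)\mapsto (n+1-j,n+1-i)$ is the reflection of $[n]^2$ across its antidiagonal, and this reflection stabilizes the above partition setwise. Hence $\mD$ is a union of blocks if and only if $\mD^\top$ is, and the two diagrams share the same $p$-height. For \emph{(2)}, the $p$-contraction is precisely the diagram obtained by collapsing each $p^h\times p^h$ block of $[n]^2$ to a single dot of $[n/p^h]^2$, in an order-preserving way. This collapsing operation commutes with the antidiagonal reflection, which I will verify by matching the formula $c'_i = c_{p^h i}/p^h$ against the analogous formula for the column vector of the adjoint at both the ambient scale $n$ and the contracted scale $n/p^h$.

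For \emph{(3)}, I combine \emph{(2)} with the general fact that an arbitrary Ferrers diagram $\mE$ of order $m$ is monotone if and only if $\mE^\top$ is convex. Given this fact, applied to $\mE = \mD^{(p)}$, the chain $\mD$ is $p$-monotone iff $\mD^{(p)}$ is monotone iff $(\mD^{(p)})^\top = (\mD^\top)^{(p)}$ is convex iff $\mD^\top$ is $p$-convex is immediate. The supporting fact itself I will obtain by direct computation on the column vectors: writing $\mE = (e_1,\ldots,e_m)$, one finds $e^\top_{j+1} - e^\top_j = |\{k : e_k = m-j\}|$ for $j \in \{1,\ldots,m-1\}$, so convexity of $\mE^\top$ is equivalent to each value in $\{1,\ldots,m-1\}$ being attained at most once among the $e_i$'s, which is precisely the strict-increase condition through intermediate heights that defines monotonicity of $\mE$.

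The only mild subtlety I expect lies in \emph{(2)}, where one must carefully identify the block at grid position $(A,B)$ of the $n/p^h \times n/p^h$ partition with the corresponding dot of $\mD^{(p)}$ and check that the small-scale reflection $(A,B)\mapsto (n/p^h+1-B,\, n/p^h+1-A)$ agrees with the large-scale one under collapsing. This is routine index bookkeeping rather than a genuine obstacle, and the rest of the proof is essentially a reorganization of the definitions in geometric language.
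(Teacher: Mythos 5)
Your proposal is correct. Note that the paper offers no proof of this lemma at all --- it is stated as ``straightforward'' --- so there is nothing to compare against; your block-decomposition argument is a legitimate and complete way to fill in the details. The key reformulation (that conditions \ref{it:p0}--\ref{it:p2} of \cref{def:p-height} for a given $h$ amount to $\mD$ being a union of $p^h\times p^h$ blocks of the standard partition of $[n]^2$, so that the antidiagonal reflection permutes blocks and descends to the antidiagonal reflection of $[n/p^h]^2$) is exactly the right observation, and it yields parts \emph{(1)} and \emph{(2)} at once. Your identity $e^\top_{l+1}-e^\top_l=|\{k : e_k=m-l\}|$ for the adjoint column vector is also correct (it follows from $e^\top_l=|\{k : e_k\ge m+1-l\}|$) and, combined with the fact that the $e_k$ are nondecreasing, it does give the equivalence between monotonicity of $\mE$ and convexity of $\mE^\top$ that part \emph{(3)} needs.
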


In the next section, we will describe a very useful representation of Ferrers diagram matrix spaces $\F^\mD$ when $\mD$ is $p$-monotone. Furthermore, by adjunction -- that is, transposing with respect to the antidiagonal -- we automatically obtain also a representation of $\F^\mD$ when $\mD$ is $p$-convex. 

The following remark generalizes \cref{rem:monotonicity} to the case of $p$-monotone Ferrers diagrams. 

    \begin{remark}\label{rem:p-monotonicity}
        Let  $\mD=(c_1,\ldots,c_n)$ be a $p$-monotone Ferrers diagram of order $n$ with $p$-height equal to $h$. Set 
        $\ell=\min\{i \,:\, c_i \neq 0\}$ and $r=\max\{ i \,:\, c_i\neq n\}.$ 
        It follows from $p$-monotonicity that $$\ell=1+s p^h \ \ \textup{and} \ \ r=t p^h, \ \ \textup{ for some }s,t\in\{0,\ldots,n/p^h\}.$$ 
        The Ferrers diagram $\mD$ being \rev{$p$-monotone}, we derive that, if $a,b\in\{s,\ldots, t+1\}$, then 
        \[
        a>b \ \ \Longrightarrow \ \ c_{ap^h}-c_{bp^h}\geq (a-b)p^h.
        \]
    \end{remark}

    \begin{remark}\label{rem:deletion-monotone}
    We note that, in the special case where $\mD$ is a $p$-monotone Ferrers diagram, \rev{for every choice of an integer $1\leq d\leq n$,
    one has that $\nu_0(\mD,d)\geq \nu_1(\mD,d)\geq\ldots\geq\nu_{d-1}(\mD,d)$
     and thus the value of $\nu_{\min}(\mD,d)$ can be rewritten as} 
    \[
    \rev{\nu_{\min}(\mD,d)=\min\{\nu_j(\mD,d)\ :\ j\in\{0,\ldots,d-1\}\}=\nu_{d-1}(\mD,d)=\sum_{i=1}^{n-d+1}c_i.}
    \]
    In other words, $\nu_{\min}(\mD,d)$ can be computed by exclusively deleting columns from the Ferrers diagram $\mD$. This is a direct consequence of \cref{rem:p-monotonicity}.
\end{remark}

\section{Flags and modular skew algebras}\label{sec:modular}

In this section we fix a prime $p$, a positive integer  $n=p^m$, and a field $\F$ of characteristic $p$ with a cyclic Galois extension $\LL$ of degree $n$. 
We fix a generator $\sigma$ of $\Gal(\LL/\F)$, and define $\bsig=\sigma-\id\in\LL[\sigma]$. \rev{Since the characteristic of the field divides the order of $\sigma$, resembling the classical group algebra setting, the skew algebra $\LL[\sigma]$ is called \textbf{modular}.} Note that, in this case the endomorphism $\bsig$ is nilpotent of order $n$, since 
$$\bsig^n=(\sigma-\id)^n=(\sigma-\id)^{p^m}=\sigma^{p^m}-\id,$$
which maps every element of $\LL$ to $0$. 
We define
\begin{equation}\label{eq:flag} \mF_i=\ker(\bsig^i), \quad \mbox { for every } i\in\{0,\ldots,n\}.
\end{equation}
In particular, we have that $\mF_0=\{0\}$, $\mF_1=\F$, and $\mF_n=\LL$.
We work under these assumptions until the end of the present section. The following result collects some immediate properties of the endomorphisms $\bsig^i$ and of the $\F$-subspaces $\mF_i$ of $\LL$.  

\begin{lemma}\label{lem:prop_bsig}
The following hold:
\begin{enumerate}[label=$(\arabic*)$]
 \item The set $\{\bsig^i : i \in\{0,\ldots,n-1\}\}$ is an $\LL$-basis of $\LL[\sigma]$.
 \item For each $i \in \{0,\ldots,n\}$, one has $\dim_\F\mF_i=i$.
 \item\label{it:3} For each $i \in \{1,\ldots,n\}$, one has $\mF_{i-1}\subseteq \mF_i$.
 \item\label{it:4} For every $i,j\in\{0,\ldots,n\}$, one has $\bsig^j(\mF_i)=\mF_{\max\{0,i-j\}}$.
\end{enumerate}
\end{lemma}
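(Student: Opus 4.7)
My plan is to handle the four items in a slightly different order than stated, since (2) and (4) are intertwined. The easiest are (1) and (3): for (1) I would expand $\bsig^i=(\sigma-\id)^i$ via the binomial theorem to get $\bsig^i=\sigma^i+\sum_{k<i}(-1)^{i-k}\binom{i}{k}\sigma^k$, observing that the change-of-basis matrix from $\{1,\sigma,\ldots,\sigma^{n-1}\}$ to $\{\id,\bsig,\ldots,\bsig^{n-1}\}$ is unitriangular and hence invertible over $\LL$; since the former set is an $\LL$-basis of $\LL[\sigma]$ by construction, so is the latter. For (3), if $x\in\mF_{i-1}$ then $\bsig^i(x)=\bsig(\bsig^{i-1}(x))=\bsig(0)=0$, giving $x\in\mF_i$.

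For (2), the inequality $\dim_\F\mF_i\leq i$ follows immediately from inequality \eqref{eq:bound_degree} applied to the $\sigma$-polynomial $\bsig^i$, which, thanks to (1), is seen to have $\sigma$-degree exactly $i$. To obtain equality, I would exploit the nilpotency identity $\bsig^n=\sigma^n-\id=0$, which forces $\mF_n=\LL$ and hence $\dim_\F\mF_n=n$. Now the restriction $\bsig|_{\mF_{i+1}}:\mF_{i+1}\to\mF_i$ has kernel $\mF_1\cap\mF_{i+1}=\mF_1$, which is $1$-dimensional (being equal to $\F$ by construction of $\sigma$), so rank-nullity gives $\dim_\F\mF_{i+1}\leq 1+\dim_\F\mF_i$. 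The dimensions therefore grow by at most one at each step, but must grow from $0$ at $i=0$ to $n$ at $i=n$ in exactly $n$ steps; hence each step contributes exactly one, forcing $\dim_\F\mF_i=i$ for every $i\in\{0,\ldots,n\}$.

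For (4), the inclusion $\bsig^j(\mF_i)\subseteq\mF_{\max\{0,i-j\}}$ is immediate: if $i\leq j$ then $\bsig^j(\mF_i)=\bsig^{j-i}(\bsig^i(\mF_i))=\{0\}=\mF_0$, while if $i\geq j$ then for $x\in\mF_i$ one has $\bsig^{i-j}(\bsig^j(x))=\bsig^i(x)=0$. For the reverse inclusion, I would compare dimensions using (2): the kernel of $\bsig^j$ restricted to $\mF_i$ is $\mF_j\cap\mF_i=\mF_{\min\{i,j\}}$, so rank-nullity together with (2) yields
\[
\dim_\F\bsig^j(\mF_i)=\dim_\F\mF_i-\dim_\F\mF_{\min\{i,j\}}=i-\min\{i,j\}=\max\{0,i-j\}=\dim_\F\mF_{\max\{0,i-j\}},
\]
so the containment is an equality. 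I expect no serious obstacle; the only subtlety is the upper-bound step in (2), which hinges essentially on using \eqref{eq:bound_degree} together with the modular identity $\bsig^n=0$, both of which are already at hand.
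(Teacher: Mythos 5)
Your proof is correct. The paper states this lemma without proof, presenting it as a collection of immediate properties, and your argument supplies exactly the details one would expect: the unitriangular change of basis for (1), the nesting of kernels for (3), and rank--nullity combined with $\bsig^n=0$ and $\ker\bsig=\F$ for (2) and (4). One small remark: your step-counting argument via $\dim_\F\mF_{i+1}\leq 1+\dim_\F\mF_i$ already forces $\dim_\F\mF_i=i$ on its own, so the appeal to \eqref{eq:bound_degree} for the upper bound, while valid for $i\leq n-1$, is not actually needed.
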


A direct consequence of Lemma \ref{lem:prop_bsig} is that $\mF=(\mF_0,\ldots,\mF_n)$ is a \textbf{full flag} of the  $\F$-vector space $\LL$, that is, $\{0\}=\mF_0\subset \mF_1\subset \ldots \subset \mF_{n-1}\subset \mF_n=\LL$ is a chain of $\F$-vector spaces and the $\F$-dimension of each quotient $\mF_i/\mF_{i-1}$ is equal to $1$.

\begin{definition} An $\F$-basis $\mB=(\beta_1,\ldots,\beta_n)$ of $\LL$  is called \textbf{$\mF$-compatible} if, for every index $i\in\{1,\ldots, n\}$, the $\F$-span $\gen{\beta_1,\ldots,\beta_i}_\F$ is equal to $\mF_i$.
\end{definition}

The following result on the multiplicative behaviour of the flag $\mF$ will be crucial in the proof of \cref{thm:monotone_representation}.

\begin{proposition}\label{prop:absorbing}
Let $a,b,h\geq 0$ be integers satisfying  $p^h(a+b)\leq n$. Then one has
    $$ \mF_{p^ha}\cdot \mF_{p^hb}\subseteq \mF_{p^h(a+b-1)}.$$
\end{proposition}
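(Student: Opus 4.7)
My plan is to prove the inclusion in two steps: first reduce to the case $h=0$ by using the characteristic-$p$ identity $(\sigma-\id)^{p^h}=\sigma^{p^h}-\id$, and then handle the $h=0$ case by an induction on $a+b$ relying on a twisted Leibniz rule.

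For the reduction, I would set $\tau=\sigma^{p^h}$. Since $\F$ has characteristic $p$, $\bsig^{p^h}=(\sigma-\id)^{p^h}=\sigma^{p^h}-\id=\tau-\id$, so $\mF_{p^h i}=\ker((\tau-\id)^i)$ for every $i$. The automorphism $\tau$ generates the Galois group of $\LL$ over its fixed field $\LL^{\langle\tau\rangle}$, and $[\LL:\LL^{\langle\tau\rangle}]=n/p^h=p^{m-h}$ is still a power of $p$. Hence the whole setup of this section applies with $(\F,\sigma,n)$ replaced by $(\LL^{\langle\tau\rangle},\tau,n/p^h)$, turning the claim with parameters $(a,b,h)$ into the $h=0$ instance of the same statement, now under the bound $a+b\leq n/p^h$.

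For the $h=0$ case I would induct on $a+b$. The engine of the induction is the twisted Leibniz rule, obtained by writing $\sigma=\id+\bsig$ and using multiplicativity of $\sigma$:
\[
\bsig(xy)=\sigma(x)\sigma(y)-xy=x\,\bsig(y)+\bsig(x)\,y+\bsig(x)\,\bsig(y).
\]
The base cases $\min\{a,b\}=0$ and $a=b=1$ are immediate from $\mF_0=\{0\}$ and $\mF_1=\F$. For the inductive step with $a,b\geq 1$ and $a+b\geq 3$, take $x\in\mF_a$ and $y\in\mF_b$. By \cref{lem:prop_bsig} one has $\bsig(x)\in\mF_{a-1}$ and $\bsig(y)\in\mF_{b-1}$, and the induction hypothesis then places each of the three products $x\,\bsig(y)$, $\bsig(x)\,y$, $\bsig(x)\,\bsig(y)$ in $\mF_{a+b-2}$. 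Hence $\bsig(xy)\in\mF_{a+b-2}$, which is equivalent to $xy\in\mF_{a+b-1}$.

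The only delicate point is the first step: one must notice that the characteristic-$p$ collapse $(\sigma-\id)^{p^h}=\sigma^{p^h}-\id$ reduces the general statement to the $h=0$ case over a suitably enlarged base field. Once this is in hand, the induction is routine, as each of the three Leibniz summands is governed by a strictly smaller instance of the same lemma.
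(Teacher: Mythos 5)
Your proposal is correct and is essentially the paper's own argument: the paper also inducts on $a+b$ and its key computation $\bsig^{p^h}(xy)=x\,\bsig^{p^h}(y)+\bsig^{p^h}(x)\,y+\bsig^{p^h}(x)\,\bsig^{p^h}(y)$ is exactly your twisted Leibniz rule applied to $\tau=\sigma^{p^h}$, using the same characteristic-$p$ collapse $\bsig^{p^h}=\sigma^{p^h}-\id$. The only cosmetic difference is that you make the reduction to $h=0$ explicit via the base change to $\LL^{\langle\tau\rangle}$, whereas the paper keeps the factor $p^h$ inside the induction throughout.
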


\begin{proof}
We work by induction on $a+b$. If $a=0$ or $b=0$, then the  claim is true since $\mF_0=\{0\}$. We assume thus that $a,b\geq 1$.  Let $x\in \mF_{p^ha}$ and $y\in\mF_{p^hb}$ be arbitrary and write \rev{
$$u=\bsig^{p^h}(x)=\sigma^{p^h}(x)-x \ \ \textup{ and } \ \ v=\bsig^{p^h}(y)=\sigma^{p^h}(y)-y.$$ 
\cref{lem:prop_bsig}\ref{it:4} implies that $u\in\mF_{p^h(a-1)}$ and $v\in\mF_{p^h(b-1)}$}.
We compute
   \[
   \sigma^{p^h}(xy)=\sigma^{p^h}(x)\sigma^{p^h}(y)=(x+u)(y+v)=xy+xv+yu+uv,
   \]
   from which we derive that 
   \[
   \bsig^{p^h}(xy)=
   \sigma^{p^h}(xy)-xy=xv+yu+uv\in \mF_{p^ha}\cdot\mF_{p^h(b-1)}+\mF_{p^hb}\cdot\mF_{p^h(a-1)}+\mF_{p^h(a-1)}\cdot\mF_{p^h(b-1)}.
   \]
   It follows from the induction hypothesis that $\bsig^{p^h}(xy)\in\mF_{p^h(a+b-2)}$ and so $xy\in\mF_{p^h(a+b-1)}$.
\end{proof}

    Let $\mD=(c_1,\ldots,c_n)$ be a Ferrers diagram of order $n=p^m$. Throughout we write $\LL[\sigma;\mD]$  for the $\F$-subspace 
    \[
    \LL[\sigma; \mD]=\bigoplus_{i=1}^n \mF_{c_i}\bsig^{i-1}= \left\{\sum_{i=1}^{n} \lambda_i \bsig^{i-1} \,:\, \lambda_i\in \mF_{c_{i}}\right\}
    \]
    of $\LL[\sigma]$ defined by $\mD$ with respect to the flag $\mF$.

   \begin{theorem}\label{thm:monotone_representation}
        Let $\mD=(c_1,\ldots,c_n)$ be a  $p$-monotone Ferrers diagram of order $n=p^m$. Let $\LL$ be a cyclic Galois extension of $\F$ of order $n$, whose Galois group is generated by $\sigma$. 
        \rev{Write $\mF=(\mF_0,\ldots,\mF_n)$, where $\mF_i$ is as in \eqref{eq:flag}, and let $\mB$ be an $\mF$-compatible $\F$-basis of $\LL$. Then, the map $\phi_{\mathcal{B}}$ in \eqref{eq:matrix_isomorphism} maps $\LL[\sigma;\mD]$ isomorphically into $\F^\mD$.} 
    \end{theorem}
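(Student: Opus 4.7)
\medskip

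\noindent\textbf{Proof plan.} Since $\phi_{\mathcal{B}}$ is already an $\F$-algebra isomorphism from $\LL[\sigma]$ onto $\F^{n\times n}$, its restriction to any subspace of $\LL[\sigma]$ is automatically injective, and hence isomorphic onto its image. Thus the only thing to prove is the inclusion
\[
\phi_{\mathcal{B}}(\LL[\sigma;\mD])\subseteq \F^{\mD}.
\]
A dimension count ($\sum_i \dim_{\F} \mF_{c_i} = \sum_i c_i = |\mD|$) additionally shows that the map is in fact onto $\F^{\mD}$, though this is not strictly required by the statement.

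The plan is to reformulate the support condition in the language of the flag $\mF$. Since $\mB=(\beta_1,\ldots,\beta_n)$ is $\mF$-compatible, $\mF_a$ is exactly the $\F$-span of the first $a$ basis vectors, so an endomorphism $T\in \End_\F(\LL)$ has a matrix in $\F^{\mD}$ (with respect to $\mB$) if and only if, for each $j\in[n]$, the $j$-th column of its matrix is supported in the first $c_j$ rows, equivalently $T(\beta_j)\in \mF_{c_j}$. Because $\LL[\sigma;\mD]$ is spanned by elements of the form $\lambda\,\bsig^{i-1}$ with $\lambda\in\mF_{c_i}$, it then suffices to verify that, for every $i,j\in[n]$ and every $\lambda\in\mF_{c_i}$,
\[
\lambda\cdot \bsig^{i-1}(\beta_j)\in \mF_{c_j}.
\]

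To prove this, I would split into cases. If $j<i$, then $\beta_j\in\mF_j$ and \cref{lem:prop_bsig}\ref{it:4} gives $\bsig^{i-1}(\beta_j)=0$. Otherwise, let $h$ be the $p$-height of $\mD$ and write $\mD^{(p)}=(c_1',\ldots,c_{n/p^h}')$, so that $c_i=p^h c'_{\lceil i/p^h\rceil}$. Set $a=\lceil i/p^h\rceil$ and $b=\lceil j/p^h\rceil$; from $j\geq i$ one gets $b\geq a$. The trivial cases $c'_a=0$ (where $\lambda=0$) and $c'_b=n/p^h$ (where $\mF_{c_j}=\LL$) are immediate. In the remaining case, \cref{lem:prop_bsig}\ref{it:4} yields $\bsig^{i-1}(\beta_j)\in\mF_{j-i+1}\subseteq\mF_{p^h(b-a+1)}$ (the second inclusion is a direct check writing $i,j$ as $(a-1)p^h+u$ and $(b-1)p^h+v$ with $1\leq u,v\leq p^h$). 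Applying the monotonicity of $\mD^{(p)}$ via \cref{rem:monotonicity} to $\mD^{(p)}$ gives $c'_a+(b-a+1)\leq c'_b+1\leq n/p^h$, which is precisely the hypothesis needed to invoke \cref{prop:absorbing}, whereupon
\[
\lambda\cdot\bsig^{i-1}(\beta_j)\ \in\ \mF_{p^h c'_a}\cdot\mF_{p^h(b-a+1)}\ \subseteq\ \mF_{p^h(c'_a+b-a)}\ \subseteq\ \mF_{p^h c'_b}\ =\ \mF_{c_j},
\]
where the last inclusion uses $c'_b-c'_a\geq b-a$, again by monotonicity of $\mD^{(p)}$.

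The main obstacle is neither conceptual nor analytic but rather combinatorial: one has to line up three separate pieces of integer bookkeeping, namely the sharp bound for $\bsig^{i-1}(\beta_j)$, the applicability hypothesis $p^h(a+b)\leq n$ of \cref{prop:absorbing}, and the inequality needed at the end. All three are controlled by the single fact that $\mD^{(p)}$ is a monotone Ferrers diagram, which is exactly why the definition of $p$-monotonicity is tailored as it is. Once the $p^h$-rounding is handled carefully, the proof reduces to one application of \cref{prop:absorbing} together with \cref{rem:monotonicity}.
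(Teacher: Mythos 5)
Your proposal is correct and follows essentially the same route as the paper: reduce to monomials $\lambda\bsig^{i-1}$, translate the support condition into flag containments via the $\mF$-compatible basis, reduce to block indices determined by the $p$-height, and conclude with one application of \cref{prop:absorbing} combined with the monotonicity of the $p$-contraction (the paper invokes \cref{rem:p-monotonicity}, which is the contracted form of \cref{rem:monotonicity}). The only cosmetic difference is that you verify the column condition $f(\beta_j)\in\mF_{c_j}$ directly while the paper checks the equivalent statement $f(\mF_j)\subseteq\mF_{c_j}$; the bookkeeping and case split are otherwise identical.
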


    \begin{proof}
       \rev{ Since $\phi_{\mathcal{B}}$ is injective, the $\F$-vector space $\LL[\sigma;\mD]$ is mapped to a space of the same $\F$-dimension. Now, $\dim_\F(\LL[\sigma;\mD])=c_1+\ldots+c_n=\dim_\F(\F^\mD)$, so to prove that $\phi_{\mathcal{B}}(\LL[\sigma;\mD])=\F^\mD$, it suffices to show that $\phi_{\mathcal{B}}(\LL[\sigma;\mD])\subseteq\F^\mD$. Doing this is equivalent to proving that }
        \begin{equation}\label{eq:containments}
            f(\mF_{i})\subseteq \mF_{c_{i}} \qquad \quad \mbox{ for every } f\in \LL[\sigma;\mD] \textup{ and } i\in\{1,\ldots,n\}.
        \end{equation}
        We proceed thus to prove \eqref{eq:containments}.
To this end, let $h$ be the $p$-height of $\mD$. This means that $p^h$ divides every entry $c_k$ of $\mD$ and, for every $k\in\{0,\ldots, n/p^h-1\}$, that $c_{1+kp^h}=c_{2+kp^h}=\ldots =c_{(k+1)p^h}$. 
Since, for each index $k$, one has 
\[
\mF_{1+kp^h}\subset \ldots \subset \mF_{(k+1)p^h} \ \ \textup{ and } \ \ \mF_{c_{1+kp^h}}=\ldots=\mF_{c_{(k+1)p^h}},
\]
checking \eqref{eq:containments} is equivalent to checking \eqref{eq:containments} for indices $i$ of the form $i=ap^h$ with $a\in\{1,\ldots n/p^h\}$. 
Let thus $a\in\{1,\ldots n/p^h\}$ and write $i=ap^h$. Let, moreover, $f\in \LL[\sigma;\mD]$: 
leveraging on linearity, we assume without loss of generality that $f=\lambda_{j} \bsig^{j-1}$, for some $j\in\{1,\ldots,n\}$ and $\lambda_{j}\in\mF_{c_j}$. Applying the same argument as before, we assume without loss of generality that $j=bp^h$ with $b\in\{1,\ldots, n/p^h\}$. Let $s$ and $t$ be as in \cref{rem:p-monotonicity}. 

If $b < s$, then $\mF_{j}=\{0\}$ and we derive trivially $f(\mF_i)=\{0\}\subseteq \mF_{c_i}$. In a dual way, if $a>t$, then $\mF_{c_{i}}=\LL$  and thus we have $f(\mF_i)\subseteq \mF_{c_i}$. Moreover, if $a\leq b$, then Lemma \ref{lem:prop_bsig}\ref{it:4} yields that $\bsig^{j-1}(\mF_{i})=\{0\}$ and, once again, we get $f(\mF_i)\subseteq \mF_{c_i}$.

We assume now that $s\leq b<a\leq t$ and write $c_i=cp^h$ and $c_j=dp^h$. \cref{rem:p-monotonicity} ensures that $c-d\geq a-b$ and so we have $p^h(d+a-b+1)\leq p^h(c+1)\leq n$.
With the aid of \cref{lem:prop_bsig}\ref{it:4} and \cref{prop:absorbing} we compute 
\[
f(\mF_i)=\lambda_j\bsig^{j-1}(\mF_{i})=\lambda_j\mF_{i-j+1}=\lambda_j(\mF_{ap^h-bp^h+1})\subseteq \mF_{dp^h}\mF_{(a-b+1)p^h}\subseteq \mF_{(a-b+d)p^h}.  
\]
 and thus $f(\mF_i)\subseteq \mF_{(a-b+d)p^h}\subseteq \mF_{cp^h}=\mF_{c_i}$.
    \end{proof}

 We conclude this section by providing a few  concrete examples.

\begin{example}\label{exa:representation_n=5}
Let $\F=\F_5$ and $n=5$. Consider the monotone Ferrers diagram $\mD=(1,3,4,5,5)$ and the space $\F^{\mD}$ representing the  matrices supported on $\mD$ over $\F=\F_5$.
    Furthermore, consider the degree $5$ extension field $\LL=\F_{5^5}=\F_5(\gamma)$, where $\gamma^5+4\gamma+3=0$. Let $\sigma$ be the $5$-Frobenius automorphism of $\F_{5^5}$ defined as $\sigma(\alpha)=\alpha^5$ and set $\bsig=\sigma-\id$. Take the full flag $\mF=(\mF_0,\mF_1,\mF_2,\mF_3,\mF_4,\mF_5)$ of $\F_{5^5}$ over $\F_5$ given by
    $$\mF_i=\ker(\bsig^i).$$      
    It is easy to see that $\mB=(1,\gamma^{2968},\gamma^{1531},\gamma^{1556},\gamma^{1566})$ is an $\mF$-compatible $\F_5$-basis of $\F_{5^5}$.
     By Theorem \ref{thm:monotone_representation}, the algebra $\F_5^{\mD}$ is then isomorphic to 
    $$\F_{5^5}[\sigma;\mD]= \mF_1\id \oplus \, \mF_3\bsig\oplus \mF_4\bsig^2\oplus \mF_5\bsig^3\oplus \mF_5\bsig^4,$$
    where the isomorphism is with respect to the chosen $\F_5$-basis $\mB$. 
\end{example}

\begin{example}\label{exa:n=8_UT}
    Let $\F=\F_2$ and $n=8$. Let, moreover, $\mD=\mT_8=(1,2,3,4,5,6,7,8)$ and write $\F^{\mD}$ for the space representing the  upper triangular $8\times 8$ matrices over $\F=\F_2$. Furthermore, consider the degree $8$ extension field $\LL=\F_{2^8}=\F_2(\gamma)$, where $\gamma^8+\gamma^4+\gamma^3+\gamma^2+1=0$. Let $\sigma:\F_{2^8}\rightarrow \F_{2^8}$ be the $2$-Frobenius automorphism which is defined as $\sigma(\alpha)=\alpha^2$ and let $\bsig=\sigma-\id$. As in \cref{exa:representation_n=5} let $\mF=(\mF_0,\ldots,\mF_8)$ be the flag in the $\F_2$-vector space $\F_{2^8}$ given by 
$$\mF_i=\ker(\bsig^i).$$
    Then an $\mF$-compatible $\F_2$-basis of $\F_{2^8}$ is given by $\mB=(1,\gamma^{170},\gamma^{136},\gamma^{204},\gamma^{222},\gamma^{38},\gamma^{143},\gamma^{5})$. By Theorem \ref{thm:monotone_representation}, the algebra $\F_2^{\mD}$ is isomorphic to 
    $$\F_{2^8}[\sigma;\mD]=\left\{\sum_{i=1}^8 \lambda_i\bsig^{i-1} \,:\, \lambda_i \in \mF_i\right\},$$
    where the isomorphism depends on  the $\F_2$-basis $\mB$. 
    Let, for instance, $f=\id+\gamma^{68}\bsig^2\in \F_{2^8}[\sigma;\mD]$:  a straightforward computation reveals that the matrix representation of $f$ with respect to $\mB$ is
    $$\left(\begin{array}{ccccccccc}
         1 \cellcolor{blue!10} & 0 \cellcolor{blue!10} & 1 \cellcolor{blue!10} & 1 \cellcolor{blue!10} & 0 \cellcolor{blue!10} & 0 \cellcolor{blue!10} & 1 \cellcolor{blue!10} & 0 \cellcolor{blue!10} \\
         0 & 1 \cellcolor{blue!10} & 1 \cellcolor{blue!10} & 1 \cellcolor{blue!10} & 0 \cellcolor{blue!10} & 1 \cellcolor{blue!10} & 1 \cellcolor{blue!10} & 1 \cellcolor{blue!10} \\
         0 & 0 & 0 \cellcolor{blue!10} & 0 \cellcolor{blue!10} & 0 \cellcolor{blue!10} & 1 \cellcolor{blue!10} & 0 \cellcolor{blue!10} & 0 \cellcolor{blue!10} \\
         0 & 0 & 0 & 0 \cellcolor{blue!10} & 1 \cellcolor{blue!10} & 0 \cellcolor{blue!10} & 0 \cellcolor{blue!10} & 1 \cellcolor{blue!10} \\
         0 & 0 & 0 & 0 & 1 \cellcolor{blue!10} & 0 \cellcolor{blue!10} & 1 \cellcolor{blue!10} & 1 \cellcolor{blue!10} \\
         0 & 0 & 0 & 0 & 0 & 1 \cellcolor{blue!10} & 1 \cellcolor{blue!10} & 1 \cellcolor{blue!10} \\
         0 & 0 & 0 & 0 & 0 & 0 & 0 \cellcolor{blue!10} & 0 \cellcolor{blue!10} \\
         0 & 0 & 0 & 0 & 0 & 0 & 0 & 0 \cellcolor{blue!10}
    \end{array}\right) \in \F_2^{\mD}.$$
\end{example}

\begin{example} Let $\mD=(4,4,4,4,8,8,8,8)$ be the $2$-monotone Ferrers diagram of order $8$ from Example \ref{exa:2-reduction}. With the same notation used in Example \ref{exa:n=8_UT}, the space $\F_2^{\mD}$ is isomorphic to 
$$\F_{2^8}[\sigma;\mD]=\left\{\sum_{i=1}^8\lambda_i \bsig^{i-1} \,:\, \lambda_1,\lambda_2,\lambda_3,\lambda_4 \in \mF_4, \lambda_5,\lambda_6,\lambda_7,\lambda_8 \in \mF_8,\right\},$$
where the isomorphism is induced by the $\F_2$-basis $\mB=(1,\gamma^{170},\gamma^{136},\gamma^{204},\gamma^{222},\gamma^{38},\gamma^{143},\gamma^{5})$.
\end{example}

    \section{Ferrers diagram rank-metric codes}\label{sec:ferrers}

In this section we prove our main results: \cref{thm:MFD-monotone}, \cref{th:strictly-monotone}, and \cref{thm:MDS-constr-column}. We  deal with the first two via the construction of MFD codes from \cref{thm:monotone_representation}, relying on \eqref{eq:bound_degree}. 
 We prove the third theorem with the aid of \cref{th:strictly-monotone} and \cref{prop:MFD-from-old}.

    \subsection{Monotone and convex Ferrers diagrams}

    In this section we give a constructive proof of Conjecture \ref{conj:ES} for $p$-monotone Ferrers diagrams and their adjoints over \rev{finite} fields of characteristic $p$. To this end, we work under the same assumptions and use the same notation as in \cref{sec:modular}. For an integer $1\leq d\leq n$ and a Ferrers diagram $\mD=(c_1,\ldots,c_n)$ of order $n=p^m$, we write additionally $\LL[\sigma; \mD]_{n-d}$  for the $\F$-subspace 
    \[
    \LL[\sigma; \mD]_{n-d}=\LL[\sigma]_{n-d}\cap\LL[\sigma;\mD]=\bigoplus_{i=1}^{n-d} \mF_{c_i}\bsig^{i-1}= \left\{\sum_{i=1}^{n-d} \lambda_i \bsig^{i-1} \,:\, \lambda_i\in \mF_{c_{i}}\right\}
    \]
    of polynomials in $\LL[\sigma;\mD]$ of degree at most $n-d$. 

        \begin{theorem}\label{thm:construction_monotone}
        Let $\F$ be a finite field, of characteristic $p$ and order $q$, and let $\mD=(c_1,\ldots,c_n)$ be a $p$-monotone Ferrers diagram of order $n=p^m$. Let $\LL$ be an extension of $\F$ of degree $n$ and let $\sigma$ be the $q$-Frobenius automorphism of $\LL$. Let $d$ be an integer with $1\le d\le n$ and write $\bsig=\sigma-\id$.
        \rev{Write $\mF=(\mF_0,\ldots,\mF_n)$, where $\mF_i$ is as in \eqref{eq:flag}, and let $\mB$ be an $\mF$-compatible $\F$-basis of $\LL$. Then the map $\phi_{\mathcal{B}}$ in \eqref{eq:matrix_isomorphism} maps $\LL[\sigma; \mD]_{n-d}$ isomorphically into a $[\mD,\nu_{\min}(\mD,d),d]_{\F}$ MFD code.}
    \end{theorem}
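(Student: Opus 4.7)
My plan is to verify that $\mC = \phi_{\mB}(\LL[\sigma;\mD]_{n-d})$ satisfies the three conditions needed to be an MFD code with the claimed parameters: that $\mC$ is contained in $\F^{\mD}$, that $\dim_{\F}\mC = \nu_{\min}(\mD,d)$, and that the minimum rank of a nonzero element of $\mC$ is exactly $d$.

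Containment and the dimension count are immediate consequences of already-proven results. Since $\LL[\sigma;\mD]_{n-d}$ is by construction an $\F$-subspace of $\LL[\sigma;\mD]$, \cref{thm:monotone_representation} ensures that $\phi_{\mB}$ sends it isomorphically into $\F^{\mD}$. The direct-sum decomposition defining $\LL[\sigma;\mD]_{n-d}$, combined with $\dim_{\F}\mF_{c_i} = c_i$ from \cref{lem:prop_bsig}, gives $\dim_{\F}\mC = \sum_{i=1}^{n-d+1} c_i$; since $\mD$ is $p$-monotone, \cref{rem:deletion-monotone} identifies this sum with $\nu_{\min}(\mD,d)$.

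The minimum rank is handled by the standard Delsarte--Gabidulin-style argument. Every $f \in \LL[\sigma;\mD]_{n-d} \subseteq \LL[\sigma]_{n-d}$ has $\sigma$-degree at most $n-d$---indeed the $\sigma$- and $\bsig$-degrees coincide, since $\bsig^k$ has leading $\sigma$-term $\sigma^k$---so \eqref{eq:bound_degree} yields $\dim_{\F}\ker f \leq n-d$ and hence $\rk(f) \geq d$. To see that this lower bound is attained, assume $\nu_{\min}(\mD,d) > 0$ (otherwise $\mC$ is trivial and there is nothing to prove): the monotonicity of the $c_i$'s then forces $c_{n-d+1} \geq 1$, so $\bsig^{n-d}$ lies in $\LL[\sigma;\mD]_{n-d}$, and by \cref{lem:prop_bsig}\ref{it:4} its kernel $\mF_{n-d}$ has $\F$-dimension $n-d$, yielding rank exactly $d$. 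The heavy lifting has already been absorbed into \cref{thm:monotone_representation}, so I do not expect any substantial obstacle beyond matching $\sigma$- and $\bsig$-degrees and producing the rank-$d$ witness.
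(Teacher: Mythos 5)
Your proof is correct and follows essentially the same route as the paper's: containment and injectivity via \cref{thm:monotone_representation}, the rank lower bound via \eqref{eq:bound_degree}, and the dimension count via \cref{lem:prop_bsig} and \cref{rem:deletion-monotone} (your sum $\sum_{i=1}^{n-d+1}c_i$ is the correct one matching the intended definition of $\LL[\sigma;\mD]_{n-d}$ as polynomials of $\bsig$-degree at most $n-d$). Your two additions --- the observation that $\sigma$-degree and $\bsig$-degree coincide, and the explicit witness $\bsig^{n-d}$ showing the minimum rank is exactly $d$ rather than merely at least $d$ --- are small refinements that the paper leaves implicit, not a different approach.
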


    \begin{proof}
    Thanks to \cref{thm:monotone_representation}, the algebra $\LL[\sigma;\mD]$ maps isomorphically onto $\F^\mD$ via $\phi_{\mathcal{B}}$. Moreover, it follows from \eqref{eq:bound_degree} that every \rev{nonzero} element of $\LL[\sigma; \mD]_{n-d}$ is mapped to a matrix of rank at least $d$. Computing the $\F$-dimension of $\LL[\sigma; \mD]_{n-d}$ we derive from \cref{lem:prop_bsig} and \cref{rem:deletion-monotone} that
    \[
    \dim_{\F}(\LL[\sigma; \mD]_{n-d})=\sum_{i=1}^{n-d}\dim_{\F}\mF_{c_i}=\sum_{i=1}^{n-d}c_i=\nu_{\min}(\mD,d).
    \]
    This concludes the proof.
    \end{proof}

    We derive the following result as an immediate corollary of \cref{thm:construction_monotone} and \cref{lemma:adj}.

\begin{theorem}\label{thm:MFD-monotone}
Let $d,m$ be positive integers and let $p$ be a prime number. Write $n=p^m$ and let $1\le d \le n$. Then the following hold: 
    \begin{enumerate}[label=(\arabic*)]
    \item Conjecture \ref{conj:ES} holds true for $p$-monotone Ferrers diagrams of order $n$ over all finite fields of characteristic $p$.
    \item Conjecture \ref{conj:ES} holds true for $p$-convex Ferrers diagrams of order $n$ over all finite fields of characteristic $p$.
    \end{enumerate}
\end{theorem}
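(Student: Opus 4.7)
The plan is to obtain both parts by essentially invoking \cref{thm:construction_monotone} and then handling the passage to adjoints via \cref{lemma:adj}. For part (1), the key observation is that the hypotheses of \cref{thm:construction_monotone} are all met: if $\F$ is any finite field of characteristic $p$, then $\F$ admits a cyclic Galois extension $\LL$ of any prescribed degree, in particular of degree $n=p^m$, with Galois group generated by the $q$-Frobenius $\sigma$, where $q=|\F|$. The flag $\mF=(\mF_0,\ldots,\mF_n)$ defined in \eqref{eq:flag} is a full flag by \cref{lem:prop_bsig}, and so an $\mF$-compatible $\F$-basis $\mB$ of $\LL$ can be built inductively by picking, for each $i\in\{1,\ldots,n\}$, any vector $\beta_i\in\mF_i\setminus\mF_{i-1}$. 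With these ingredients in place, \cref{thm:construction_monotone} directly produces, for any $p$-monotone Ferrers diagram $\mD$ of order $n=p^m$ and any $1\leq d\leq n$, a $[\mD,\nu_{\min}(\mD,d),d]_\F$ MFD code, which is exactly the content of \cref{conj:ES} in this case.

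For part (2), the strategy is to reduce to part (1) via adjunction. Concretely, given a $p$-convex Ferrers diagram $\mD$ of order $n=p^m$, \cref{lemma:adj} ensures that $\mD^\top$ is $p$-monotone of the same order $n$. By part (1), there exists a $[\mD^\top,\nu_{\min}(\mD^\top,d),d]_{\F}$ MFD code $\mC'\subseteq \F^{\mD^\top}$. I would then transform $\mC'$ into a code supported on $\mD$ by applying the antidiagonal transposition $A\mapsto A^{\mathrm{adj}}$ that swaps entry $(i,j)$ with entry $(n+1-j,n+1-i)$. This map is an $\F$-linear isomorphism $\F^{\mD^\top}\to\F^{\mD}$ (since it implements exactly the definition of the adjoint diagram), and it preserves rank because it is the composition of the usual matrix transpose with a reversal of the order of rows and columns, each of which preserves rank. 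Consequently, the image $\mC=\{A^{\mathrm{adj}}\ :\ A\in\mC'\}$ is a $[\mD,\dim_\F\mC',d]_{\F}$ rank-metric code.

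To conclude, I need to verify that $\dim_\F \mC = \nu_{\min}(\mD,d)$, which follows from the remark following \eqref{eq:gen_SBbound}, where it is pointed out that $\nu_{\min}(\mD,d)=\nu_{\min}(\mD^\top,d)$ as a direct consequence of the combinatorial interpretation of $\nu_{\min}$ in terms of deletions of the first rows and last columns. Hence $\mC$ is a $[\mD,\nu_{\min}(\mD,d),d]_{\F}$ MFD code, completing the proof. The only place where one might expect a subtlety is the existence of an $\mF$-compatible basis in the setting of \cref{thm:construction_monotone}; however, this is entirely mechanical since the flag is full. No step of the argument requires field-size restrictions, and everything hinges on the modular skew-algebra representation already established in \cref{thm:monotone_representation}.
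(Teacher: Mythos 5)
Your proposal is correct and follows essentially the same route as the paper, which derives this theorem as an immediate corollary of \cref{thm:construction_monotone} and \cref{lemma:adj}. The details you supply (existence of the degree-$n$ cyclic extension and of an $\mF$-compatible basis, rank-preservation of the antidiagonal transposition, and the identity $\nu_{\min}(\mD,d)=\nu_{\min}(\mD^\top,d)$) are exactly the ones the paper leaves implicit.
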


We illustrate Theorem \ref{thm:MFD-monotone} with a concrete example.

\begin{example}
 Let $\F=\F_5$, $n=5$ and $d=4$. Let $\mD=(1,3,4,5,5)$ be the monotone Ferrers diagram  from Example \ref{exa:representation_n=5}. In this case $\nu_{\min}(\mD,4)=4$, and Theorem \ref{thm:MFD-monotone} ensures the existence of a  $[\mD,4,4]_{\F_5}$ MFD code. A concrete example is given by:
 $$\F_{5^5}[\sigma;\mD]_1=\mF_1\id\oplus\mF_3\bsig=\langle \id,\bsig,\gamma^{2968}\bsig,\gamma^{1531}\bsig\rangle_{\F_5}.$$
 Using the $\F_5$-basis $\mB=(1,\gamma^{2968},\gamma^{1531},\gamma^{1556},\gamma^{1566})$ of $\F_{5^5}$ given in Example \ref{exa:representation_n=5}, we obtain that the $[\mD,4,4]_{\F_5}$ MFD code in $\F_5^{\mD}$ is the $\F_5$-subspace generated by the following matrices:
$$\left(\begin{array}{ccccc} 
1\cellcolor{blue!10} & 0\cellcolor{blue!10} & 0\cellcolor{blue!10} & 0\cellcolor{blue!10} & 0\cellcolor{blue!10} \\
0 & 1\cellcolor{blue!10} & 0\cellcolor{blue!10} & 0\cellcolor{blue!10} & 0\cellcolor{blue!10} \\
0 & 0\cellcolor{blue!10} & 1\cellcolor{blue!10} & 0\cellcolor{blue!10} & 0\cellcolor{blue!10} \\
0 & 0 & 0\cellcolor{blue!10} & 1\cellcolor{blue!10} & 0\cellcolor{blue!10} \\
0 & 0 & 0 & 0 \cellcolor{blue!10}& 1\cellcolor{blue!10} 
\end{array}\right),\,
\left(\begin{array}{ccccc}
0\cellcolor{blue!10} & 1\cellcolor{blue!10} & 0\cellcolor{blue!10} & 0\cellcolor{blue!10} & 0\cellcolor{blue!10} \\
0 & 0\cellcolor{blue!10} & 1 \cellcolor{blue!10}& 0 \cellcolor{blue!10}& 0 \cellcolor{blue!10}\\
0 & 0\cellcolor{blue!10} & 0\cellcolor{blue!10} & 1\cellcolor{blue!10} & 0 \cellcolor{blue!10}\\
0 & 0 & 0\cellcolor{blue!10} & 0\cellcolor{blue!10} & 1\cellcolor{blue!10} \\
0 & 0 & 0 & 0 \cellcolor{blue!10}& 0\cellcolor{blue!10} 
\end{array}\right),\,
\left(\begin{array}{ccccc} 
   0 \cellcolor{blue!10}   &  0 \cellcolor{blue!10}  &   0 \cellcolor{blue!10}  &   0  \cellcolor{blue!10} &   4\cellcolor{blue!10} \\
   0    &  1 \cellcolor{blue!10}  &   1 \cellcolor{blue!10}  &   0 \cellcolor{blue!10}  &   0\cellcolor{blue!10} \\
   0    &  0  \cellcolor{blue!10} &   2 \cellcolor{blue!10}  &   2 \cellcolor{blue!10}  &   0\cellcolor{blue!10} \\
   0    &  0   &   0 \cellcolor{blue!10}  &   3 \cellcolor{blue!10}  &   3\cellcolor{blue!10} \\
   0    &  0   &   0   &   0  \cellcolor{blue!10} &   4\cellcolor{blue!10}
\end{array}\right),\,
\left(\begin{array}{ccccc} 
    0  \cellcolor{blue!10} &   0 \cellcolor{blue!10}  &   0 \cellcolor{blue!10}  &   1 \cellcolor{blue!10}  &   0 \cellcolor{blue!10}\\
    0   &   0  \cellcolor{blue!10} &   0  \cellcolor{blue!10} &   0  \cellcolor{blue!10} &   0 \cellcolor{blue!10} \\
    0   &   1 \cellcolor{blue!10}  &   2  \cellcolor{blue!10} &   1 \cellcolor{blue!10}  &   0\cellcolor{blue!10} \\
    0   &   0   &   3 \cellcolor{blue!10}  &   1 \cellcolor{blue!10}  &   3\cellcolor{blue!10} \\
    0   &   0   &   0   &   1 \cellcolor{blue!10}  &   2\cellcolor{blue!10} \\
\end{array}\right).
$$\end{example}

\subsection{Strictly monotone and initially convex Ferrers diagrams}

Among monotone Ferrers diagrams, a distinguished subfamily  is given by strictly monotone ones, whose adjoints are the initially convex Ferrers diagrams.

\begin{definition}\label{def:strictmonotone_initiallyconvex}
    A Ferrers diagram $\mD=(c_1,\ldots,c_n)$ is called \textbf{strictly monotone} if, whenever $c_i>0$, one has $c_{i+1}>c_i$.
    A Ferrers diagram $\mD=(c_1,\ldots,c_n)$ is called \textbf{initially convex} if, for every $i\in\{1,\ldots,n-1\}$, one has $c_1\leq 1$ and $c_{i+1}-c_i\leq 1$.
\end{definition}

It turns out that a Ferrers diagram is initially convex if and only if it is \emph{in canonical form} in the sense of  \cite[Definition 3.6]{cotardo2023diagonals}. Although the two notions coincide, they appear
in different contexts and for apparently unrelated purposes. \rev{We remark that, in contrast with the case of monotone and convex Ferrers diagrams, the empty diagram $\mD=\emptyset$ is strictly monotone and initially convex, while the full diagram $\mD=[n]^2$ is neither (unless $n=1$).}

\begin{definition}
    Let $p$ be a prime number. A Ferrers diagram $\mD=(c_1,\ldots,c_n)$ is called \textbf{strictly $p$-monotone} if its $p$-contraction is strictly monotone.
    A Ferrers diagram $\mD=(c_1,\ldots,c_n)$ is called \textbf{initially $p$-convex} if its $p$-contraction is initially convex.
\end{definition}

\begin{remark}\label{rmk:monotone-p}
Let $p$ be a prime number and let $\mD$ be
a strictly monotone Ferrers diagram. \rev{If} $\mD$ is nonempty, \rev{then} the $p$-height of $\mD$ is equal to $0$ and so $\mD^{(p)}=\mD$. In particular, strictly monotone Ferrers diagrams are strictly $p$-monotone for every choice of a prime $p$.
\end{remark}

The following result is straightforward.

\begin{lemma}\label{lem:adj-p-conv}
Let $p$ be a prime number.
    A Ferrers diagram $\mD$ is strictly $p$-monotone if and only if $\mD^\top$ is initially $p$-convex.
\end{lemma}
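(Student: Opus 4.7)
The plan is to reduce the statement to the ``non-$p$'' version and then verify the latter by a direct comparison of column heights and row widths.

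First, I will use the preceding lemma (stating that $(\mD^\top)^{(p)} = (\mD^{(p)})^\top$ and that $\mD$ and $\mD^\top$ have equal $p$-height), together with the definitions of strictly $p$-monotone and initially $p$-convex via the $p$-contraction, to reduce to proving the following non-modular version: for every Ferrers diagram $\mathcal{E} = (c_1, \ldots, c_n)$, $\mathcal{E}$ is strictly monotone if and only if $\mathcal{E}^\top$ is initially convex.

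Next, for a Ferrers diagram $\mathcal{E}=(c_1,\dots,c_n)$, I would set $r_i = |\{j : c_j \geq i\}|$ for $i \in \{1,\ldots,n\}$ and $r_{n+1}=0$, so that $r_i$ is the width of the $i$-th row of $\mathcal{E}$. A short direct check from the definition $\mathcal{E}^\top=\{(n+1-j,n+1-i):(i,j)\in\mathcal{E}\}$ shows that, if $\mathcal{E}^\top=(c'_1,\dots,c'_n)$, then $c'_k = r_{n+1-k}$ for all $k$. The key observation is then that for every $i\in\{1,\ldots,n\}$ one has
\[
r_i - r_{i+1} = |\{j : c_j = i\}|.
\]

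Using this identity, I would verify the two directions symmetrically. On one hand, $\mathcal{E}$ is strictly monotone if and only if every positive integer $i$ occurs at most once among $c_1,\dots,c_n$, which by the identity above is equivalent to $r_i-r_{i+1}\leq 1$ for all $i\in\{1,\ldots,n\}$. On the other hand, $\mathcal{E}^\top$ is initially convex if and only if $c'_1\leq 1$ and $c'_{k+1}-c'_k\leq 1$ for all $k\in\{1,\ldots,n-1\}$; translating via $c'_k=r_{n+1-k}$ this reads $r_n\leq 1$ and $r_i-r_{i+1}\leq 1$ for $i\in\{1,\ldots,n-1\}$, which together are exactly the same system of inequalities (since $r_n\leq 1$ is the instance $i=n$ with $r_{n+1}=0$).

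There is no real obstacle here beyond bookkeeping: the content of the lemma is a conjugate-partition computation, and the only point requiring care is the index shift $c'_k=r_{n+1-k}$ coming from the antidiagonal reflection, together with the separate initial condition $c'_1\leq 1$ in the definition of initially convex, which corresponds precisely to the strict-monotonicity constraint that at most one column can have the maximal height $n$.
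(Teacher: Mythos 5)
Your proof is correct and complete. The paper states \cref{lem:adj-p-conv} without proof (it is introduced as ``straightforward''), and your argument---reducing to the $p$-height-zero case via \cref{lemma:adj} and then verifying the conjugate-partition identity $c'_k=r_{n+1-k}$ together with $r_i-r_{i+1}=|\{j : c_j=i\}|$---is exactly the routine verification the authors intend; in particular, your observation that the extra initial condition $c'_1\leq 1$ in \cref{def:strictmonotone_initiallyconvex} corresponds precisely to the constraint $r_n\leq 1$ (at most one column of full height) is the one point genuinely worth spelling out.
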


The following results ensures, up to iteration, that the property of being strictly $p$-monotone is preserved via top-right embeddings of Ferrers diagrams into larger square grids. 

\begin{lemma}\label{lem:s-monotone_embed}
Let $p$ be a prime number and let $\mD=(c_1,\ldots,c_n)$ be a strictly $p$-monotone Ferrers diagram of order $n$ of $p$-height $h$. Then 
$$\mD'=(\underbrace{\,0\ldots,0\,}_{p^h \textup{ times}},c_1,\ldots,c_n)$$
is a strictly $p$-monotone Ferrers diagram of order $n+p^h$ and $p$-height $h$.
\end{lemma}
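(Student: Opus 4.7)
The plan is to establish the three claims about $\mD'$ (that it is a Ferrers diagram of order $n+p^h$, that its $p$-height equals $h$, and that it is strictly $p$-monotone) in sequence. Validity of $\mD'$ as a Ferrers diagram is immediate: the column vector $(0,\ldots,0,c_1,\ldots,c_n)$ is non-decreasing, and every entry is at most $n \leq n+p^h$.

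For the $p$-height, the first half is to check that $h$ satisfies conditions \ref{it:p0}--\ref{it:p2} of \cref{def:p-height} for $\mD'$. Divisibility of $n+p^h$ by $p^h$ is inherited from $p^h \mid n$; each entry of $\mD'$ is either $0$ or some $c_i$ and is thus divisible by $p^h$; and slicing the columns of $\mD'$ into blocks of $p^h$ consecutive columns from the right reproduces the blocks of $\mD$ together with an extra leading all-zero block, so each block is constant. The harder part is maximality. Assuming $\mD$ is nonempty (the empty case being trivial), I would argue that if some $h'>h$ were admissible for $\mD'$, then condition \ref{it:p2} would force the rightmost $p^{h'}$ columns of $\mD'$ to coincide; translating via the formula $c'_i = c_{p^h i}/p^h$, this forces two or more consecutive entries at the right end of $\mD^{(p)}$ to agree. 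However, nonemptiness of $\mD$ gives $c_n>0$, hence $c'_{n/p^h} > 0$, and strict monotonicity of $\mD^{(p)}$ forbids consecutive equal positive entries -- a contradiction.

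Once the $p$-height of $\mD'$ is known to equal $h$, a direct application of the definition of $p$-contraction yields $(\mD')^{(p)} = (0, c'_1, \ldots, c'_{n/p^h})$, that is, $\mD^{(p)}$ with a $0$ prepended. Strict monotonicity transfers: the defining condition in \cref{def:strictmonotone_initiallyconvex} is vacuous for the leading $0$, while on the remaining entries it coincides with the strict monotonicity of $\mD^{(p)}$. Hence $(\mD')^{(p)}$ is strictly monotone and $\mD'$ is strictly $p$-monotone. The main obstacle is the maximality part of the $p$-height argument: this is the unique place where the strict monotonicity hypothesis is truly decisive, since the remaining verifications amount to bookkeeping with the definitions.
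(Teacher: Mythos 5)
The paper states \cref{lem:s-monotone_embed} without proof, and your direct verification is essentially the argument the authors leave implicit: admissibility of $h$ for $\mD'$ is bookkeeping, maximality is the one place where strict monotonicity enters, and $(\mD')^{(p)}=(0,c'_1,\ldots,c'_{n/p^h})$ inherits strict monotonicity because the defining condition is vacuous at a zero entry. Two degenerate cases deserve more care than you give them. When $n=p^h$, the contraction $\mD^{(p)}$ has a single entry, so ``two or more consecutive entries at the right end of $\mD^{(p)}$'' does not parse; what condition \ref{it:p2} for $h'>h$ actually forces there is agreement between the last block of $\mD$ (of value $c_n>0$) and the prepended zero block, which is just as absurd, so only the phrasing needs adjusting (argue with $(\mD')^{(p)}$ rather than $\mD^{(p)}$). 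More seriously, the empty case is not trivial: for $\mD=(0,0)$ with $p=2$ one has $h=1$, yet $\mD'=(0,0,0,0)$ has $2$-height $2$, so the $p$-height assertion of the lemma genuinely fails for all-zero diagrams (for $\mD=\emptyset$ the height is $v_p(n)$, and writing $n=p^hm$ with $p\nmid m$, the new order $p^h(m+1)$ can pick up extra factors of $p$). This is a blemish of the statement rather than of your argument --- $\mD'$ is still strictly $p$-monotone, and only nonempty diagrams matter in the proof of \cref{th:strictly-p-monotone} --- but it should be excluded explicitly rather than dismissed as trivial.
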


Observe that Lemma \ref{lem:s-monotone_embed} is not valid if we only assume the Ferrers diagram to be monotone.

\begin{theorem}\label{th:strictly-p-monotone}
    Let $p$ be a prime number and let $d,n$ be positive integers with $1\le d \le n$. Then the following hold:
    \begin{enumerate}[label=(\arabic*)]
    \item Conjecture \ref{conj:ES} holds true for strictly $p$-monotone Ferrers diagrams \rev{of order $n$} over any finite field of characteristic $p$.
    \item Conjecture \ref{conj:ES} holds true for initially $p$-convex Ferrers diagrams \rev{of order $n$} over any finite field of characteristic $p$.
    \end{enumerate}
\end{theorem}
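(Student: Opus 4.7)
The plan is to reduce the statement to the case $n=p^m$ already established by \cref{thm:MFD-monotone}, using the embedding supplied by \cref{lem:s-monotone_embed}. Fix a finite field $\F$ of characteristic $p$, a strictly $p$-monotone Ferrers diagram $\mD=(c_1,\ldots,c_n)$ of order $n$ with $p$-height $h$, and an integer $d\in\{1,\ldots,n\}$. Since $p^h$ divides $n$, for any integer $m$ with $p^m\geq n$ we have $m\geq h$ and $k:=(p^m-n)/p^h\in\Z_{\geq 0}$. Applying \cref{lem:s-monotone_embed} exactly $k$ times produces a strictly $p$-monotone (hence $p$-monotone) Ferrers diagram
\[
\mD'=(\underbrace{0,\ldots,0}_{p^m-n\text{ times}},c_1,\ldots,c_n)
\]
of order $p^m$. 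By \cref{thm:MFD-monotone} applied to $\mD'$, there exists a $[\mD',\nu_{\min}(\mD',d),d]_{\F}$ MFD code $\mC'\subseteq\F^{\mD'}$.

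Since every column height of $\mD'$ is bounded by $n$ and the first $p^m-n$ columns of $\mD'$ are empty, the nonzero entries of any matrix in $\F^{\mD'}$ are confined to the $n\times n$ subgrid spanned by rows $1,\ldots,n$ and columns $p^m-n+1,\ldots,p^m$. This yields a canonical rank-preserving $\F$-linear isomorphism $\F^{\mD'}\cong \F^{\mD}$, under which $\mC'$ corresponds to a subspace $\mC\subseteq\F^{\mD}$ of the same $\F$-dimension and minimum rank. A direct computation shows that the prepended empty columns contribute $\max\{0,-d+1+j\}=0$ to each $\nu_j(\mD',d)$ for $j\in\{0,\ldots,d-1\}$, giving $\nu_j(\mD',d)=\nu_j(\mD,d)$ for all such $j$ and in particular $\nu_{\min}(\mD',d)=\nu_{\min}(\mD,d)$. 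Hence $\mC$ is a $[\mD,\nu_{\min}(\mD,d),d]_{\F}$ MFD code, proving part~(1).

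For part~(2), if $\mD$ is initially $p$-convex, then by \cref{lem:adj-p-conv} the adjoint $\mD^{\top}$ is strictly $p$-monotone, so part~(1) supplies a $[\mD^{\top},\nu_{\min}(\mD^{\top},d),d]_{\F}$ MFD code. Transposing its elements yields an $\F$-subspace of $\F^{\mD}$ with the same dimension and minimum rank, and the equality $\nu_{\min}(\mD,d)=\nu_{\min}(\mD^{\top},d)$ recorded after \eqref{eq:gen_SBbound} completes the argument.

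The only genuinely nontrivial ingredient is \cref{lem:s-monotone_embed}: prepending exactly $p^h$ empty columns at a time is essential so as not to inflate the $p$-height of the enlarged diagram, and strict $p$-monotonicity (as opposed to mere $p$-monotonicity) is precisely what makes this iteration admissible. Once that lemma is at our disposal, the argument above is a routine assembly of \cref{thm:MFD-monotone}, the obvious rank-preservation under padding with zero rows and columns, and the vanishing of $\max\{0,-d+1+j\}$ for $j\leq d-1$.
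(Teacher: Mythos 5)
Your proposal is correct and follows essentially the same route as the paper: embed $\mD$ into a strictly $p$-monotone diagram $\mD'$ of order $p^m$ by iterating \cref{lem:s-monotone_embed} exactly $(p^m-n)/p^h$ times, invoke \cref{thm:MFD-monotone}, and transfer the code back via the evident isometry (the paper compresses your explicit verification that $\nu_{\min}(\mD',d)=\nu_{\min}(\mD,d)$ and the rank-preserving identification into the phrase ``clearly isometric''). The adjunction argument for part (2) is likewise identical.
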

\begin{proof}
    We only prove the result for strictly $p$-monotone Ferrers diagrams: the one for initially $p$-convex Ferrers diagrams follows then from \cref{lem:adj-p-conv}. Let $\mD=(c_1,\ldots,c_n)$ be a strictly $p$-monotone Ferrers diagram of $p$-height $h$. Let $\F$ be a finite field of characteristic $p$ and $m=\min\{i \,:\, p^i \geq n\}$.
    Since $p^h$ divides $p^{m}-n$, 
    iterating \cref{lem:s-monotone_embed} precisely $(p^m-n)/p^h$ times, we embed $\mD$ into a strictly $p$-monotone Ferrers diagram $\mD'=(c_1',\ldots,c_{p^m}')$ of order $p^m$, where 
    $$c_i'=\begin{cases}
        0 & \mbox{ if } i\leq p^m-n, \\
        c_{i-\rev{(p^m-n)}} & \mbox { if } i>p^m-n.
    \end{cases}$$ 
    \cref{thm:MFD-monotone} ensures the existence of a $[\mD',\nu_{\min}(\mD',d),d]_\F$ MFD code, which is clearly isometric to a $[\mD,\nu_{\min}(\mD,d),d]_\F$ MFD code.
\end{proof}

We derive the following as a direct consequence of \cref{th:strictly-p-monotone} and \cref{rmk:monotone-p}.

\begin{theorem}\label{th:strictly-monotone}
    Let $d,n$ be positive integers with $1\le d \le n$. Then the following hold:
    \begin{enumerate}[label=(\arabic*)]
    \item Conjecture \ref{conj:ES} holds true for strictly monotone Ferrers diagrams \rev{of order $n$} over any finite field.
    \item Conjecture \ref{conj:ES} holds true for initially convex Ferrers diagrams \rev{of order $n$} over any finite field.
    \end{enumerate}
\end{theorem}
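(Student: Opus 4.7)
The plan is to bootstrap \cref{th:strictly-p-monotone} to a characteristic-free statement by applying it separately in each characteristic. Concretely, fix a finite field $\F$ of characteristic $p$ and a strictly monotone Ferrers diagram $\mD$ of order $n$. The trivial case is when $\mD$ is empty, for which the statement holds vacuously; otherwise $\mD$ is nonempty and \cref{rmk:monotone-p} applies to yield that $\mD$ has $p$-height $h=0$, so $\mD^{(p)}=\mD$ is strictly monotone. By the very definition of ``strictly $p$-monotone'', this means $\mD$ is strictly $p$-monotone with respect to the characteristic of the base field. At this point the work is already done by \cref{th:strictly-p-monotone}(1), which produces the desired $[\mD,\nu_{\min}(\mD,d),d]_\F$ MFD code and establishes assertion (1).

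For assertion (2), the plan is to reduce to (1) via adjunction. If $\mD$ is initially convex, then by \cref{def:strictmonotone_initiallyconvex} (or by combining \cref{lem:adj-p-conv} with \cref{rmk:monotone-p}) the adjoint diagram $\mD^\top$ is strictly monotone, so part (1) gives a $[\mD^\top,\nu_{\min}(\mD^\top,d),d]_\F$ MFD code $\mC$. Since matrix transposition preserves rank and sends $\F^{\mD^\top}$ to $\F^\mD$, the $\F$-subspace $\{A^\top : A\in\mC\}\subseteq\F^\mD$ has the same dimension $\nu_{\min}(\mD^\top,d)$ and the same minimum rank $d$. Using the equality $\nu_{\min}(\mD,d)=\nu_{\min}(\mD^\top,d)$ recorded after \eqref{eq:gen_SBbound}, this produces the required MFD code for $\mD$. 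Equivalently, one could verify that every initially convex diagram is initially $p$-convex for all $p$ and invoke \cref{th:strictly-p-monotone}(2) directly, but the transposition argument is shorter.

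Given that the substantial technical work has been carried out in \cref{th:strictly-p-monotone} (itself resting on \cref{thm:MFD-monotone} and ultimately on the skew-algebra representation of \cref{thm:monotone_representation}), no serious obstacle remains here; the proof is essentially a one-step invocation in each characteristic, plus an adjunction to pass between strictly monotone and initially convex diagrams. The only items requiring explicit mention are the reduction to the nonempty case and the compatibility of transposition with the support-and-rank bookkeeping.
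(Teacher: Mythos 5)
Your argument is correct and coincides with the paper's own (essentially one-line) proof: by \cref{rmk:monotone-p} a strictly monotone diagram is strictly $p$-monotone for every prime $p$, so \cref{th:strictly-p-monotone} applies in whatever characteristic the given finite field has, and the initially convex case follows by adjunction. The only small imprecision is that adjunction here is reflection across the \emph{antidiagonal}, so the rank-preserving bijection $\F^{\mD^\top}\to\F^{\mD}$ is $A\mapsto JA^\top J$ with $J$ the exchange matrix rather than plain transposition; this changes nothing of substance, and your alternative route of invoking \cref{th:strictly-p-monotone}(2) directly is exactly what the paper does.
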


We remark that the last result holds over any finite field, in contrast with \cref{thm:MFD-monotone}, which requires the characteristic $p$ of the field
to be the unique prime divisor of $n$. 

\begin{corollary}\label{cor:upper-triangular}
    Conjecture \ref{conj:ES} holds true for upper triangular matrices over any finite field.
\end{corollary}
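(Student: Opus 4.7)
The plan is to observe that the upper triangular Ferrers diagram $\mT_n$ is a special case of a strictly monotone Ferrers diagram, and then simply invoke \cref{th:strictly-monotone}. Recall that $\mT_n=\{(i,j)\in[n]^2 : i\le j\}$, which in column-vector notation is $\mT_n=(1,2,3,\ldots,n)$, i.e.\ $c_i=i$ for every $i\in\{1,\ldots,n\}$.

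First I would verify strict monotonicity against \cref{def:strictmonotone_initiallyconvex}: the condition requires that whenever $c_i>0$, one has $c_{i+1}>c_i$. In our case every $c_i=i>0$ and $c_{i+1}=i+1>i=c_i$, so $\mT_n$ is strictly monotone. Next, I would apply part (1) of \cref{th:strictly-monotone} to $\mT_n$: for every finite field $\F$ and every integer $d$ with $1\le d\le n$, there exists a $[\mT_n,\nu_{\min}(\mT_n,d),d]_{\F}$ MFD code, which is precisely the content of \cref{conj:ES} for $\mT_n$.

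There is really no obstacle here; the work has already been done in \cref{th:strictly-monotone}, whose proof in turn relies on embedding $\mT_n$ into a square Ferrers diagram of order $p^m$ via \cref{lem:s-monotone_embed} and then applying the modular construction of \cref{thm:MFD-monotone}. The only thing to note is that, since $\mT_n$ is strictly monotone, \cref{rmk:monotone-p} ensures it is strictly $p$-monotone for the characteristic $p$ of any prescribed base field, so the reduction to the modular case goes through uniformly without any restriction on $|\F|$.
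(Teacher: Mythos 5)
Your proposal is correct and is exactly the argument the paper intends: \cref{cor:upper-triangular} is stated as an immediate consequence of \cref{th:strictly-monotone}, since $\mT_n=(1,2,\ldots,n)$ satisfies $c_{i+1}=i+1>i=c_i$ whenever $c_i>0$ and is therefore strictly monotone. Your additional remarks tracing the reduction through \cref{rmk:monotone-p}, \cref{lem:s-monotone_embed}, and \cref{thm:MFD-monotone} accurately reflect how the paper's machinery delivers the result without any restriction on the field size.
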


\begin{corollary}\label{cor:blocks}
If $p$ is a prime number and $\mD$ is a Ferrers diagram whose $p$-contraction is upper triangular, then \cref{conj:ES} holds true over any finite field of characteristic $p$.
\end{corollary}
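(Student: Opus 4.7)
The plan is to observe that \cref{cor:blocks} is an immediate specialization of \cref{th:strictly-p-monotone}(1), once one recognizes that the upper triangular Ferrers diagram is strictly monotone. The strategy has essentially two steps: verify this basic observation, then invoke the stated theorem.

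First, I would verify that $\mT_m=(1,2,\ldots,m)$ is strictly monotone in the sense of \cref{def:strictmonotone_initiallyconvex}: for every index $i\in\{1,\ldots,m-1\}$ one has $c_i=i>0$ and $c_{i+1}=i+1>i=c_i$, so the defining condition is clearly satisfied.

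Next, given a Ferrers diagram $\mD$ whose $p$-contraction $\mD^{(p)}$ is upper triangular, $\mD^{(p)}$ equals $\mT_{n/p^h}$ (where $h$ is the $p$-height of $\mD$), and is therefore strictly monotone. By the definition of strictly $p$-monotone Ferrers diagrams, this is precisely the assertion that $\mD$ is strictly $p$-monotone. Applying \cref{th:strictly-p-monotone}(1) then yields the existence of an MFD code with the parameters predicted by \cref{conj:ES} for $\mD$ over any finite field of characteristic $p$, concluding the argument.

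There is essentially no obstacle here, as the corollary is little more than a convenient rephrasing of a special case of \cref{th:strictly-p-monotone}. The only nontrivial content lies entirely upstream, in \cref{thm:monotone_representation} and \cref{thm:construction_monotone}, which are already established.
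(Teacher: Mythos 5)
Your proposal is correct and matches the paper's (implicit) derivation: since $\mT_{n/p^h}$ is strictly monotone, $\mD$ is strictly $p$-monotone by definition, and \cref{th:strictly-p-monotone}(1) applies directly; in particular you rightly route through \cref{th:strictly-p-monotone} rather than \cref{thm:MFD-monotone}, which would require the order of $\mD$ to be a power of $p$.
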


We observe that \cref{cor:blocks} partially answers an open problem from \cite[Section~VIII]{etzion2016optimal} asking whether MFD codes exist for Ferrers diagrams of the form $(2,2,4,4,\ldots,2r,2r)$.

\begin{example}
   In this example we illustrate how to construct an MFD code of upper triangular matrices in the smallest \rev{formerly} open case over $\F_2$. For this, let $n=6$ and $d=4$ and consider the strictly monotone Ferrers diagram $\mD=(1,2,3,4,5,6)$. In this case, we have
   $\nu_{\min}(\mD,4)=6$, and now we show how to construct a $[\mD,6,4]_{\F_2}$ MFD code.

    The smallest power of $2$ which is at least $n$ is $n'=8$, and we can extend $\mD$ to the Ferrers diagram $\mD'=(0,0,1,2,3,4,5,6)$ of order $8$, which is still strictly monotone, as a consequence of Lemma \ref{lem:s-monotone_embed}. Thus, we consider the degree $n'=8$ extension field of $\F_2$ given by $\F_{2^8}=\F_2(\gamma)$, where $\gamma^8+\gamma^4+\gamma^3+\gamma^2+1=0$. As in Example \ref{exa:n=8_UT}, we let $\sigma$ be the $2$-Frobenius automorphism of $\F_{2^8}$, we set $\bsig=\sigma-\id$, and take the full flag $\mF=(\mF_0,\ldots,\mF_8)$, where $\mF_i=\ker(\bsig^i)$. The basis $\mB=(1,\gamma^{170},\gamma^{136},\gamma^{204},\gamma^{222},\gamma^{38},\gamma^{143},\gamma^{5})$ is $\mF$-compatible, and Theorem \ref{thm:monotone_representation} yields that $\F_2^\mD\cong\F_2^{\mD'}$ is isomorphic to 
    $$\F_{2^8}[\sigma;\mD']=\bigoplus_{i=3}^8 \mF_{i-2}\bsig^{i-1}=\left\{\sum_{i=3}^8 \lambda_i\bsig^{i-1} \,:\, \lambda_i \in \mF_{i-2}\right\},$$
    where the isomorphism is defined using the $\F_2$-basis $\mB$. 
    The $[\mD',6,4]_{\F_2}$ MFD code representation in $\F_{2^8}[\sigma;\mD']$ is then given by 
    $$\F_{2^8}[\sigma;\mD']_4=\langle \bsig^2,\bsig^3,\gamma^{170}\bsig^3,\bsig^4,\gamma^{170}\bsig^4,\gamma^{136}\bsig^4\rangle_{\F_2}.$$
    A straightforward computation shows that writing each element with respect to the $\F_2$-basis $\mB$ -- and then taking the top-right $6\times 6$ submatrices -- we obtain the $[\mD,6,4]_{\F_2}$ MFD code in $\F_2^{\mD}$ as the span of the following six matrices:
    $$\left(\begin{array}{cccccccc}   
     1  \cellcolor{blue!10}  & 0 \cellcolor{blue!10}  & 0 \cellcolor{blue!10}  & 0  \cellcolor{blue!10}  &  0  \cellcolor{blue!10}  &  0  \cellcolor{blue!10}  \\
    0  &  1  \cellcolor{blue!10} &  0  \cellcolor{blue!10} &  0  \cellcolor{blue!10} & 0  \cellcolor{blue!10} &  0  \cellcolor{blue!10} \\
    0 &  0 & 1  \cellcolor{blue!10}  & 0  \cellcolor{blue!10} &  0 \cellcolor{blue!10}  &  0 \cellcolor{blue!10}  \\
   0 &   0 &   0   & 1  \cellcolor{blue!10}  &   0  \cellcolor{blue!10}   &  0  \cellcolor{blue!10} \\
     0   &  0  &   0   &  0   &  1  \cellcolor{blue!10}  &   0  \cellcolor{blue!10} \\
      0   &  0  &   0   & 0   &  0 &   1  \cellcolor{blue!10} 
     \end{array}\right), \,
\left(\begin{array}{cccccccc}      
    0 \cellcolor{blue!10} &  1   \cellcolor{blue!10} &  0  \cellcolor{blue!10}  &  0  \cellcolor{blue!10}  &  0  \cellcolor{blue!10}  &  0  \cellcolor{blue!10} \\
   0  &  0  \cellcolor{blue!10}  &  1  \cellcolor{blue!10}  &  0  \cellcolor{blue!10}  &  0   \cellcolor{blue!10} &  0  \cellcolor{blue!10} \\
   0  &  0  &  \cellcolor{blue!10}  0  &  \cellcolor{blue!10}  1  &  \cellcolor{blue!10}  0  &  0 \cellcolor{blue!10}  \\
   0  &  0  &  0  &  0  \cellcolor{blue!10}  &  1  \cellcolor{blue!10}  &  0  \cellcolor{blue!10} \\
   0  &  0  &  0  &  0  &  0  \cellcolor{blue!10}  &  1 \cellcolor{blue!10}  \\
   0  &  0  &  0  &  0  &  0  &  0  \cellcolor{blue!10} 
   \end{array}\right),\,
\left(\begin{array}{cccccccc}   
   0 \cellcolor{blue!10} &  0  \cellcolor{blue!10}  &  1  \cellcolor{blue!10}  &  0   \cellcolor{blue!10} &  1  \cellcolor{blue!10}  &  0  \cellcolor{blue!10} \\
   0  &  1  \cellcolor{blue!10}  &  1  \cellcolor{blue!10}  &  1  \cellcolor{blue!10}  &  0  \cellcolor{blue!10}  &  1  \cellcolor{blue!10} \\
   0  &  0  &  0  \cellcolor{blue!10}  &  0  \cellcolor{blue!10}  &  1  \cellcolor{blue!10}  &  0  \cellcolor{blue!10} \\
   0  &  0  &  0  &  1  \cellcolor{blue!10}  &  1  \cellcolor{blue!10}  &  1  \cellcolor{blue!10} \\
   0  &  0  &  0  &  0  &  0  \cellcolor{blue!10}  &  0 \cellcolor{blue!10}  \\
   0  &  0  &  0  &  0  &  0  &  1 \cellcolor{blue!10}  
   \end{array}\right),   $$
   
   $$   \left(\begin{array}{cccccccc}       
   0  \cellcolor{blue!10}  &  0  \cellcolor{blue!10}  &  1  \cellcolor{blue!10}   &  0  \cellcolor{blue!10}  &  0  \cellcolor{blue!10}  &  0 \cellcolor{blue!10}  \\
     0  &  0  \cellcolor{blue!10}  &  0  \cellcolor{blue!10}  &  1  \cellcolor{blue!10}  &  0  \cellcolor{blue!10}  &  0  \cellcolor{blue!10} \\
     0  &  0  &  0 \cellcolor{blue!10}   &  0  \cellcolor{blue!10}  &  1  \cellcolor{blue!10}  &  0 \cellcolor{blue!10}  \\
    0  &  0  &  0  &  0  \cellcolor{blue!10}  &  0  \cellcolor{blue!10}  &  1 \cellcolor{blue!10}  \\
     0  &  0  &  0  &  0  &  0  \cellcolor{blue!10}  &  0 \cellcolor{blue!10}  \\
    0  &  0  &  0  &  0  &  0  &  0 \cellcolor{blue!10}  
   \end{array}\right),\,
   \left(\begin{array}{cccccccc}        
     0  \cellcolor{blue!10}  &  0  \cellcolor{blue!10}  &  0  \cellcolor{blue!10}  &  1  \cellcolor{blue!10}  &  0  \cellcolor{blue!10}  &  1 \cellcolor{blue!10}  \\
    0  &  0  \cellcolor{blue!10}  &  1  \cellcolor{blue!10}  &  1  \cellcolor{blue!10}  &  1  \cellcolor{blue!10}  &  0 \cellcolor{blue!10}  \\
     0  &  0  &  0  \cellcolor{blue!10}  &  0  \cellcolor{blue!10}  &  0  \cellcolor{blue!10}  &  1  \cellcolor{blue!10} \\
    0  &  0  &  0  &  0  \cellcolor{blue!10}  &  1  \cellcolor{blue!10}  &  1  \cellcolor{blue!10} \\
    0  &  0  &  0  &  0  &  0  \cellcolor{blue!10}  &  0  \cellcolor{blue!10} \\
     0  &  0  &  0  &  0  &  0  &  0 \cellcolor{blue!10}  
   \end{array}\right), \,
   \left(\begin{array}{cccccccc}         
     0  \cellcolor{blue!10}  &  0  \cellcolor{blue!10}  &  0  \cellcolor{blue!10}  &  0 \cellcolor{blue!10}   &  0 \cellcolor{blue!10}   &  1 \cellcolor{blue!10}  \\
    0  &  0  \cellcolor{blue!10}  &  0  \cellcolor{blue!10}  &  1  \cellcolor{blue!10}  &  1  \cellcolor{blue!10}  &  1 \cellcolor{blue!10}  \\
    0  &  0  &  1  \cellcolor{blue!10}  &  0  \cellcolor{blue!10}  &  1  \cellcolor{blue!10}  &  0  \cellcolor{blue!10} \\
     0  &  0  &  0  &  1  \cellcolor{blue!10}  &  0  \cellcolor{blue!10}  &  0 \cellcolor{blue!10}  \\
     0  &  0  &  0  &  0  &  0  \cellcolor{blue!10}  &  0  \cellcolor{blue!10} \\
    0  &  0  &  0  &  0  &  0  &  0 \cellcolor{blue!10}  
   \end{array}\right).  $$
   Observe that, if we wanted to construct a $[\mD,6,4]_{\F}$ MFD code over a finite field $\F$ of some other (positive) characteristic we would have to redefine $n'$. For instance, if we chose $\F=\F_3$, then  $\mD$ would be mapped to $\mD'=(0,0,0,1,2,3,4,5,6)$, which is of order $9=3^2$, and consequently the representation of $\F_3^{\mD'}$  would become  $\F_{3^9}[\sigma;\mD']$. 
\end{example}

\begin{remark}\label{rmk:Q}
    As a consequence of 
\cite[Corollary~1]{ballico2015linear}, if for a pair $(\mD,d)$ there exists a maximum Ferrers diagram code over some finite field $\F$, then such a code can be lifted to an MFD code over $\Q$. In particular, \cref{thm:MFD-monotone} and \cref{th:strictly-monotone} yield the existence of MFD codes over $\Q$ for $p$-monotone and strictly $p$-monotone Ferrers diagrams, and their adjoints. 
\end{remark}

\subsection{Extension to MDS-constructible pairs}

 We conclude the paper by extending Theorem \ref{th:strictly-monotone} to all MDS-constructible pairs $(\mD,d)$; cf.\ \cref{prop:smon_are_MDSconst}. We recall that, for MDS-constructible pairs \cref{conj:ES} is known to be true only over large finite fields; cf.\ \cref{sec:rank-metric}. Here, we will prove it for every finite field.
 We start by showing that strictly monotone Ferrers diagrams and their adjoints are MDS-constructible with respect to any sensible distance.

 \begin{proposition}\label{prop:smon_are_MDSconst}
     Let $\mD$ be a strictly monotone or initially convex Ferrers diagram of order $n$. Then, for every integer $2\le d \le n$, the pair $(\mD,d)$ is MDS-constructible. 
 \end{proposition}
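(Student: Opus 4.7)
The plan is to reduce the initially convex case to the strictly monotone case via adjunction, and then establish MDS-constructibility in the strictly monotone case by a direct combinatorial identity. The adjunction $(r, J) \mapsto (n+1-J, n+1-r)$ preserves the diagonal index $J - r + 1$, so $|\mD^\top \cap \Delta_i^n| = |\mD \cap \Delta_i^n|$ for every $i$; combined with $\nu_{\min}(\mD, d) = \nu_{\min}(\mD^\top, d)$, MDS-constructibility of $\mD$ is equivalent to that of $\mD^\top$. Since the adjoint of an initially convex diagram is strictly monotone by \cref{lem:adj-p-conv} (with any prime $p$, recalling \cref{rmk:monotone-p}), it suffices to treat the case where $\mD = (c_1, \ldots, c_n)$ is strictly monotone.

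For strictly monotone $\mD$, which is in particular monotone, \cref{rem:deletion-monotone} yields
\[
\nu_{\min}(\mD, d) \;=\; \sum_{i=1}^{n-d+1} c_i,
\]
that is, the number of points of $\mD$ lying in the first $n - d + 1$ columns. The claim therefore reduces to proving the combinatorial identity
\[
\sum_{i=1}^{n-d+1} c_i \;=\; \sum_{i=1}^n \max\{0,\; |\mD \cap \Delta_i^n| - d + 1\}.
\]

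The key observation is that, for strictly monotone $\mD$, each diagonal $\Delta_i^n$ meets $\mD$ in a contiguous ``bottom-right tail''. Indeed, if $(r, J) \in \mD$ with $J < n$, then $c_J \ge r \ge 1$, and strict monotonicity forces $c_J < n$ (otherwise $c_{J+1} > n$), so $r + 1 \le n$; \cref{rem:monotone_diagonal} then gives $(r+1, J+1) \in \mD$. Iterating, for each $i$ there exists a threshold $J_i \in \{i, \ldots, n+1\}$ such that $\mD \cap \Delta_i^n = \{(J - i + 1, J) : J_i \le J \le n\}$, and hence $N_i := |\mD \cap \Delta_i^n| = n - J_i + 1$.

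Counting the points of $\mD \cap \Delta_i^n$ that lie in the first $n - d + 1$ columns is now immediate: it equals $\max\{0, (n - d + 1) - J_i + 1\} = \max\{0, N_i - d + 1\}$. Summing over $i$ and regrouping the total count of points of $\mD$ in the first $n - d + 1$ columns by diagonals produces the displayed identity, so $(\mD, d)$ is MDS-constructible. I do not expect any substantive obstacle: the content is essentially the observation that strict monotonicity forces each diagonal intersection with $\mD$ to be a tail, which exactly matches the ``discard the last $d - 1$ columns'' description of $\nu_{\min}(\mD, d)$ with the ``discard the last $d - 1$ points of each diagonal'' description of the MDS sum.
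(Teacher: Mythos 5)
Your proof is correct and follows essentially the same route as the paper: reduce the initially convex case to the strictly monotone case by adjunction (which preserves diagonals and $\nu_{\min}$), express $\nu_{\min}(\mD,d)$ as the count of points in the first $n-d+1$ columns via \cref{rem:deletion-monotone}, and use \cref{rem:monotone_diagonal} to show each diagonal meets $\mD$ in a tail so that its contribution to that count is exactly $\max\{0,|\mD\cap\Delta_i^n|-d+1\}$. The paper phrases the diagonal step as showing $\Delta_i^n\cap R\subseteq\mD$ whenever the diagonal meets $\mD$ in the first $n-d+1$ columns, but this is the same observation as your threshold $J_i$.
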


\begin{proof}
 We start by assuming that $\mD$ is strictly monotone so, thanks to Remark \ref{rem:deletion-monotone}, we have that  $\nu_{\min}(\mD,d)=|\mD \cap ([n]\times[n-d+1])|$. Moreover, for every $i\in[n]$, strict monotonicity implies $c_i\le i$ and thus $\mD\subseteq \mT_n$.  
     With a slight abuse of notation identifying $\mT_{n-d+1}$ with its image in $[n]^2$, this means that   \begin{equation}\label{eq:numin_SM}\nu_{\min}(\mD,d)=|\mD\cap\mT_{n-d+1}|.
    \end{equation}
    Set $R=[n]\times([n]\setminus[n-d+1])$ and, for each $i \in [n-d+1]$, write $r_i=|\mD\cap \Delta_i^n\cap \mT_{n-d+1}|$. Assuming that $r_i>0$,  \cref{rem:monotone_diagonal} yields that
    $(\Delta_i^n\cap R)\subseteq \mD$ and thus
    \begin{equation}\label{eq:ri}r_i=|\mD\cap (\Delta_i^n\cap \mT_{n-d+1})|=|\mD\cap \Delta_i^n|-|\mD\cap (\Delta_i^n\cap R)|=|\mD\cap \Delta_i^n|-d+1.\end{equation}
    Combining \eqref{eq:numin_SM} and \eqref{eq:ri} we finally obtain
    $$\nu_{\min}(\mD,d)=|\mD\cap\mT_{n-d+1}|=\sum_{i=1}^{n-d+1}r_i=\sum_{i=1}^{n-d+1}\max\{0,|\mD\cap\Delta_i^n|-d+1\},$$
     which shows that the pair $(\mD,d)$ is MDS-constructible. This also concludes the proof as initially convex Ferrers diagrams are adjoints of strictly monotone ones and MDS-constructibility is preserved under adjunction.
\end{proof}

Observe that Proposition \ref{prop:smon_are_MDSconst} is not true in general if we consider the wider classes of monotone or $p$-monotone Ferrers diagrams (and their adjoints), as the next example shows. 

\begin{example}
In this example, for given positive integers $n$ and $2\leq d\leq n$, write 
$$\nu_{\mathrm{MDS}}(\mD,d)=\sum_{i=1}^{n-d+1}\max\{0,|\mD\cap\Delta_i^n|-d+1\}.$$
In the following table, we collect the values of $\nu_{\min}(\mD,d)$ and $\nu_{\mathrm{MDS}}(\mD,d)$ for the first two monotone Ferrers diagrams from \cref{exa:monotone}, which we label in order $\mD_1$ and $\mD_2$. Note that here $n=5$ and $\mD_1$ is strictly monotone, whilst $\mD_2$ is not. 
\begin{center}
    \begin{tabular}{c|c|c|c|c}
    $d$ & $\nu_{\min}(\mD_1,d)$ & $\nu_{\mathrm{MDS}}(\mD_1,d)$ & $\nu_{\min}(\mD_2,d)$ & $\nu_{\mathrm{MDS}}(\mD_2,d)$  \\
    \hline
       2  & 4 & 4 & {\bf 12} & {\bf 10}\\ \hline
       3  & 1 & 1 & {\bf 7}  & {\bf 6}\\ \hline
       4  & 0 & 0  & 3  & 3\\ \hline
       5  & 0 & 0 & 1  & 1
    \end{tabular}
\end{center}
\end{example}

\begin{definition}
 Let $\mD=(c_1,\ldots,c_n)$ be a Ferrers diagram of order $n$, let $d \in \{2,\ldots,n\}$ and let $j\in\{0,\ldots,d-1\}$. The pair  $(\mD,d)$ is called \textbf{$j$-Singleton} if 
 $$\nu_{\min}(\mD,d)\rev{=\nu_j(\mD,d)}=
 \sum_{i=1}^{n-j}\max\{0,c_i-d+1+j\}.$$
\end{definition}

Informally speaking, a pair $(\mD,d)$ is $j$-Singleton if $\nu_{\min}(\mD,d)$ is obtained by counting the number of dots in $\mD$ after removing the last $j$ columns and the first $d-j-1$ rows. Because of this and \eqref{eq:gen_SBbound}, every pair $(\mD,d)$, where $\mD$ is a Ferrers diagram of order $n$ and $2\le d \le n$, is $j$-Singleton for some $j\in\{0,\ldots,d-1\}$. It is not difficult to construct examples of pairs $(\mD,d)$ that are $j$-Singleton for more than one choice of $j$. 

\medskip

Until the end of the paper, we will use the following notation. For any positive integers $n$ and $d$ with $2\le d \le n$ and every $j \in \{0,\ldots,d-1\}$, define 
the following subsets of $[n]^2$:
\begin{align*}
\mS_{n,d,j}& =\{(i,\ell) \,:\, i \in \{d-j,\ldots, n\}, \ell  \in [n-j] \}, \\
\mT_{n,d,j}&=\mS_{n,d,j}\cap \mT_{n}, \\
 \mL_{n,d,j}&=[n]^2\setminus \mS_{n,d,j}.
\end{align*}
 With the notation above, observe that, for any Ferrers diagram $\mD$ of order $n$, one has
$$\sum_{i=1}^{n-j}\max\{0,c_i-d+1+j\}=| \mD\cap \mS_{n,d,j}|=|\mD|-|\mD\cap \mL_{n,d,j}|=\nu_j(\mD,d),$$
and hence, the value $\nu_{\min}(\mD,d)$ in \eqref{eq:gen_SBbound} can be written as
$$ \rev{\nu_{\min}(\mD,d)=\min\left\{\nu_j(\mD,d)\ :\ j\in\{0,\ldots, d-1\}\right\}=\min\left\{ | \mD\cap \mS_{n,d,j}|\ :\ j\in\{0,\ldots, d-1\}\right\}. }$$
In particular, $(\mD,d)$ is $j$-Singleton if and only if $\nu_{\min}(\mD,d)=\nu_j(\mD,d)$.

\begin{example}\label{ex:STL}
    For $n=8$, $d=4$ and $j=1$, the figure below represents the three sets $\mS_{8,4,1}$, $\mT_{8,4,1}$ and $\mL_{8,4,1}$ as follows. The black dots represent $\mS_{8,4,1}$, the orange dots represent $\mL_{8,4,1}$ and the dots contained in the red triangular area correspond to $\mT_{8,4,1}$. 
\begin{center}
    \begin{tikzpicture}[scale=0.5]
\draw[help lines, very thick, white, fill=blue!10] (0,1) -- (0,-7) -- (8,-7)--(8,1)--(0,1);
\draw[help lines,  thick, red, fill=red!10] (2,-1) -- (7,-1) -- (7,-6)--(6,-6)--(6,-5)--(5,-5)--(5,-4)--(4,-4)--(4,-3)--(3,-3)--(3,-2)--(2,-2)--(2,-1);
\draw[orange,fill=orange] (0.5,0.5) circle (0.1cm);
\draw[orange,fill=orange]  (1.5,0.5) circle (0.1cm);
\draw[orange,fill=orange]  (2.5,0.5) circle (0.1cm);
\draw[orange,fill=orange] (3.5,0.5) circle (0.1cm);
\draw[orange,fill=orange] (4.5,0.5) circle (0.1cm);
\draw[orange,fill=orange] (5.5,0.5) circle (0.1cm);
\draw[orange,fill=orange] (6.5,0.5) circle (0.1cm);
\draw[orange,fill=orange] (7.5,0.5) circle (0.1cm);

\draw[orange,fill=orange] (0.5,-0.5) circle (0.1cm);
\draw[orange,fill=orange] (1.5,-0.5) circle (0.1cm);
\draw[orange,fill=orange] (2.5,-0.5) circle (0.1cm);
\draw[orange,fill=orange] (3.5,-0.5) circle (0.1cm);
\draw[orange,fill=orange] (4.5,-0.5) circle (0.1cm);
\draw[orange,fill=orange] (5.5,-0.5) circle (0.1cm);
\draw[orange,fill=orange] (6.5,-0.5) circle (0.1cm);
\draw[orange,fill=orange] (7.5,-0.5) circle (0.1cm);

\draw[fill=black] (0.5,-1.5) circle (0.1cm);
\draw[fill=black] (1.5,-1.5) circle (0.1cm);
\draw[fill=black] (2.5,-1.5) circle (0.1cm);
\draw[fill=black] (3.5,-1.5) circle (0.1cm);
\draw[fill=black] (4.5,-1.5) circle (0.1cm);
\draw[fill=black] (5.5,-1.5) circle (0.1cm);
\draw[fill=black] (6.5,-1.5) circle (0.1cm);
\draw[orange,fill=orange] (7.5,-1.5) circle (0.1cm);

\draw[fill=black] (0.5,-2.5) circle (0.1cm);
\draw[fill=black] (1.5,-2.5) circle (0.1cm);
\draw[fill=black] (2.5,-2.5) circle (0.1cm);
\draw[fill=black] (3.5,-2.5) circle (0.1cm);
\draw[fill=black] (4.5,-2.5) circle (0.1cm);
\draw[fill=black] (5.5,-2.5) circle (0.1cm);
\draw[fill=black] (6.5,-2.5) circle (0.1cm);
\draw[orange,fill=orange] (7.5,-2.5) circle (0.1cm);

\draw[fill=black] (0.5,-3.5) circle (0.1cm);
\draw[fill=black] (1.5,-3.5) circle (0.1cm);
\draw[fill=black] (2.5,-3.5) circle (0.1cm);
\draw[fill=black] (3.5,-3.5) circle (0.1cm);
\draw[fill=black] (4.5,-3.5) circle (0.1cm);
\draw[fill=black] (5.5,-3.5) circle (0.1cm);
\draw[fill=black] (6.5,-3.5) circle (0.1cm);
\draw[orange,fill=orange] (7.5,-3.5) circle (0.1cm);

\draw[fill=black] (0.5,-4.5) circle (0.1cm);
\draw[fill=black] (1.5,-4.5) circle (0.1cm);
\draw[fill=black] (2.5,-4.5) circle (0.1cm);
\draw[fill=black] (3.5,-4.5) circle (0.1cm);
\draw[fill=black] (4.5,-4.5) circle (0.1cm);
\draw[fill=black] (5.5,-4.5) circle (0.1cm);
\draw[fill=black] (6.5,-4.5) circle (0.1cm);
\draw[orange,fill=orange] (7.5,-4.5) circle (0.1cm);

\draw[fill=black] (0.5,-5.5) circle (0.1cm);
\draw[fill=black] (1.5,-5.5) circle (0.1cm);
\draw[fill=black] (2.5,-5.5) circle (0.1cm);
\draw[fill=black] (3.5,-5.5) circle (0.1cm);
\draw[fill=black] (4.5,-5.5) circle (0.1cm);
\draw[fill=black] (5.5,-5.5) circle (0.1cm);
\draw[fill=black] (6.5,-5.5) circle (0.1cm);
\draw[orange,fill=orange] (7.5,-5.5) circle (0.1cm);

\draw[fill=black] (0.5,-6.5) circle (0.1cm);
\draw[fill=black] (1.5,-6.5) circle (0.1cm);
\draw[fill=black] (2.5,-6.5) circle (0.1cm);
\draw[fill=black] (3.5,-6.5) circle (0.1cm);
\draw[fill=black] (4.5,-6.5) circle (0.1cm);
\draw[fill=black] (5.5,-6.5) circle (0.1cm);
\draw[fill=black] (6.5,-6.5) circle (0.1cm);
\draw[orange,fill=orange] (7.5,-6.5) circle (0.1cm);
\end{tikzpicture}
\end{center}
\end{example}

\begin{lemma}\label{lem:jSing}
 Let $\mD$ be a Ferrers diagram of order $n$, let $d \in \{2,\ldots,n\}$, and let $j\in\{0,\ldots,d-1\}$ be such that $(\mD,d)$ is $j$-Singleton. Let $\mD'\subseteq\mD$ be a Ferrers diagram of order $n$ with the property that $\mD\cap \mL_{n,d,j}=\mD'\cap \mL_{n,d,j}$. Then $(\mD',d)$ is $j$-Singleton. 
 \end{lemma}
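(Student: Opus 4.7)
My plan is to compare $\nu_j(\mD',d)=|\mD'\cap\mS_{n,d,j}|$ with $\nu_k(\mD',d)=|\mD'\cap\mS_{n,d,k}|$ for an arbitrary $k\in\{0,\ldots,d-1\}$ and show the first is at most the second; this is precisely what it means for $(\mD',d)$ to be $j$-Singleton. The natural tool is the inclusion-exclusion identity
\[
|\mD'\cap\mS_{n,d,k}|-|\mD'\cap\mS_{n,d,j}| = |\mD'\cap(\mS_{n,d,k}\setminus\mS_{n,d,j})|-|\mD'\cap(\mS_{n,d,j}\setminus\mS_{n,d,k})|,
\]
which reduces the problem to comparing these two symmetric differences.

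The key observation, which requires no case analysis on the sign of $k-j$, is that $\mS_{n,d,k}\setminus\mS_{n,d,j}\subseteq\mL_{n,d,j}$ by the very definition of $\mL_{n,d,j}=[n]^2\setminus\mS_{n,d,j}$. Since, by hypothesis, $\mD$ and $\mD'$ agree on $\mL_{n,d,j}$, this forces
\[
|\mD'\cap(\mS_{n,d,k}\setminus\mS_{n,d,j})| = |\mD\cap(\mS_{n,d,k}\setminus\mS_{n,d,j})|.
\]
At the same time, from $\mD'\subseteq\mD$ we obtain the opposite-direction bound
\[
|\mD'\cap(\mS_{n,d,j}\setminus\mS_{n,d,k})|\leq|\mD\cap(\mS_{n,d,j}\setminus\mS_{n,d,k})|.
\]

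Combining the equality and the inequality above with the decomposition, applied this time to $\mD$ in place of $\mD'$, I would conclude
\[
|\mD'\cap\mS_{n,d,k}|-|\mD'\cap\mS_{n,d,j}|\geq|\mD\cap\mS_{n,d,k}|-|\mD\cap\mS_{n,d,j}|\geq 0,
\]
where the last inequality is exactly the assumption that $(\mD,d)$ is $j$-Singleton. Since $k$ was arbitrary, this yields $\nu_j(\mD',d)=\nu_{\min}(\mD',d)$, proving the lemma. I do not anticipate a genuine obstacle here: the geometric fact $\mS_{n,d,k}\setminus\mS_{n,d,j}\subseteq\mL_{n,d,j}$ is immediate from the definitions and is what makes the hypothesis $\mD\cap\mL_{n,d,j}=\mD'\cap\mL_{n,d,j}$ couple perfectly with the inclusion $\mD'\subseteq\mD$ in opposite directions on the two pieces of the symmetric difference.
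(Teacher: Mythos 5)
Your proof is correct and rests on exactly the same two observations as the paper's: that $\mD$ and $\mD'$ agree on $\mL_{n,d,j}$ forces equality of the counts on the part of $\mS_{n,d,k}$ outside $\mS_{n,d,j}$, while $\mD'\subseteq\mD$ gives the inequality elsewhere. The only difference is bookkeeping: you decompose along the symmetric difference of the regions $\mS_{n,d,k}$ and $\mS_{n,d,j}$, whereas the paper decomposes the diagram as $|\mD|=|\mD'|+|\mD\setminus\mD'|$ and routes both comparisons through the offset $|\mD\setminus\mD'|$ -- the resulting inequality $\nu_k(\mD',d)-\nu_j(\mD',d)\geq\nu_k(\mD,d)-\nu_j(\mD,d)$ is the same.
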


\begin{proof}
For any $i \in \{0,\ldots,d-1\}$ we have
\begin{align}\label{eq:nu_i}\nu_{\min}(\mD,d)\leq \nu_i(\mD,d)&=|\mD|-|\mD\cap \mL_{n,d,i}|=|\mD'|+|\mD\setminus \mD'|-|\mD\cap \mL_{n,d,i}| \nonumber \\
 &\leq |\mD'|+|\mD\setminus \mD'|-|\mD'\cap \mL_{n,d,i}|=\nu_i(\mD',d)+|\mD\setminus \mD'|.
 \end{align}
 On the other hand, by hypothesis $(\mD,d)$ is $j$-Singleton, and thus we have
 \begin{equation}\label{eq:nu_j}\nu_{\min}(\mD,d)=\nu_j(\mD,d)=|\mD\cap \mS_{n,d,j}|=|\mD|-|\mD\cap \mL_{n,d,j}|=\nu_j(\mD',d)+|\mD\setminus \mD'|.
 \end{equation}
 Combining \eqref{eq:nu_i} with \eqref{eq:nu_j}, we get $\nu_{\min}(\mD',d)=\nu_j(\mD',d)$.
\end{proof}

\begin{lemma}\label{lem:MDScolumn}
    Let $\mD$ be a Ferrers diagram of order $n$ and let $d \in \{2,\ldots,n\}$, $j\in\{0,\ldots,d-1\}$. Assume that $(\mD,d)$ is MDS-constructible and $j$-Singleton. Then the following hold:
    \begin{enumerate}[label=(\arabic*)]
    \item\label{lem:MDS1} One has $\mD \cap \mS_{n,d,j}=\mD \cap \mT_{n,d,j}$. 
    \item\label{lem:MDS1.5} One has $\{i \in[n-d+1] \,:\, |\mD\cap\Delta_i^n|\geq d\}=\{i \in[n-d+1]\,: \, \mD\cap\Delta_i^n\cap \mS_{n,d,j} \neq \emptyset \}$.
    \item\label{lem:MDS2} If $i \in [n-d+1]$ and $\mD\cap \Delta_i^n\cap \mS_{n,d,j}\neq \emptyset$, then 
    $\mD\cap \Delta_i^n\supseteq \Delta_i^n\cap \mL_{n,d,j}$.
    \end{enumerate}
\end{lemma}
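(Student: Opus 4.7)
The plan is to couple MDS-constructibility and $j$-Singleton-ness into a single chain of inequalities that is forced to collapse to equalities. The key geometric input is that every diagonal $\Delta_i^n$ is contained in the upper triangle $\mT_n$ and, since the $\Delta_i^n$ partition $\mT_n$, one has $\sum_i |\mD \cap \Delta_i^n \cap \mS_{n,d,j}| = |\mD \cap \mT_{n,d,j}|$. A short parametric count on pairs $(k,k+i-1)\in\Delta_i^n$ inside $\mS_{n,d,j}$ gives $|\Delta_i^n \cap \mS_{n,d,j}| = n-d-i+2$ and therefore $|\Delta_i^n \cap \mL_{n,d,j}| = d-1$ for all $j \in \{0,\ldots,d-1\}$ and $i \in [n-d+1]$; a value that is, perhaps surprisingly, independent of $j$ in this range.

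From the ensuing bound $|\mD \cap \Delta_i^n \cap \mL_{n,d,j}| \leq d-1$ I would derive the pointwise estimate $|\mD \cap \Delta_i^n \cap \mS_{n,d,j}| \geq \max\{0, |\mD \cap \Delta_i^n| - d + 1\}$. Summing over $i$ and invoking MDS-constructibility on the left-hand side and $j$-Singleton-ness on the right-hand side, I obtain
$$\nu_{\min}(\mD,d) \;\leq\; \sum_{i=1}^n |\mD \cap \Delta_i^n \cap \mS_{n,d,j}| \;=\; |\mD \cap \mT_{n,d,j}| \;\leq\; |\mD \cap \mS_{n,d,j}| \;=\; \nu_{\min}(\mD,d).$$
Since the extremes coincide, every inequality in this chain must be an equality, both globally and pointwise in $i$.

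From $|\mD \cap \mT_{n,d,j}| = |\mD \cap \mS_{n,d,j}|$ combined with $\mT_{n,d,j} \subseteq \mS_{n,d,j}$ I would extract part \ref{lem:MDS1}. The pointwise equality $|\mD \cap \Delta_i^n \cap \mS_{n,d,j}| = \max\{0, |\mD \cap \Delta_i^n| - d + 1\}$ makes both sides simultaneously zero or positive, delivering part \ref{lem:MDS1.5}. Finally, when both sides are positive, rearranging gives $|\mD \cap \Delta_i^n \cap \mL_{n,d,j}| = d-1 = |\Delta_i^n \cap \mL_{n,d,j}|$, which forces $\Delta_i^n \cap \mL_{n,d,j} \subseteq \mD$ and establishes part \ref{lem:MDS2}. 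The only step requiring care is the elementary diagonal count in the opening, particularly checking that the relevant bounds on $k$ indeed coincide for every admissible $j$; after that, the three conclusions are forced by the pigeonhole-style collapse of the inequality chain.
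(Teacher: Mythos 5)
Your proof is correct and follows essentially the same route as the paper: the same chain $\nu_{\min}(\mD,d)\le\sum_i|\mD\cap\Delta_i^n\cap\mS_{n,d,j}|=|\mD\cap\mT_{n,d,j}|\le|\mD\cap\mS_{n,d,j}|=\nu_{\min}(\mD,d)$, driven by the count $|\Delta_i^n\cap\mL_{n,d,j}|=d-1$, with all three parts extracted from the forced pointwise equalities. The only (cosmetic) difference is that the paper first proves $X\subseteq Y$ by a separate pigeonhole claim and sums over $X$, whereas your uniform pointwise bound $|\mD\cap\Delta_i^n\cap\mS_{n,d,j}|\ge\max\{0,|\mD\cap\Delta_i^n|-d+1\}$ absorbs that step.
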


\begin{proof}
     Define the subsets $X$ and $Y$ of $[n-d+1]$ as
     $$
     X=\{i \in[n-d+1] \,:\, |\mD\cap\Delta_i^n|\geq d\} \ \ \textup{ and } \ \  Y=\{i \in[n-d+1]\,: \, \mD\cap\Delta_i^n\cap \mS_{n,d,j} \neq \emptyset \}.
     $$
     \rev{We claim that $X\subseteq Y$. To see this, let $i\in X$ and note that  $|\Delta_i^n\cap \mathcal{S}_{n,d,j}|=n-d-i+2$. Since $|\mD\cap\Delta_i^n|\geq d$ and both $\Delta_i^n\cap \mathcal{S}_{n,d,j}$ and $\mD\cap\Delta_i^n$ are contained in the set $\Delta_i^n$ of cardinality $n-i+1$, they must intersect nontrivially. Since $i$ was chosen arbitrarily, the claim is proven.
     
     We proceed by first proving \ref{lem:MDS1} and then  \ref{lem:MDS1.5} and \ref{lem:MDS2} together.}
     
\ref{lem:MDS1}  Since $(\mD,d)$ is $j$-Singleton, we have that
\begin{equation}\label{eq:11}\nu_{\min}(\mD,d)=|\mD\cap \mS_{n,d,j}|.
    \end{equation}
    On the other hand, $(\mD,d)$ is MDS-constructible and thus we have 
\begin{align}\label{eq:numin_ineq}\nu_{\min}(\mD,d)&=\sum_{i\in X}(|\mD\cap\Delta_i^n|-d+1)=\sum_{i\in X}(|\mD\cap\Delta_i^n|-|\Delta_i^n\cap \mL_{n,d,j}|) \nonumber\\
    &\leq \sum_{i\in X}(|\mD\cap\Delta_i^n|-|\mD\cap(\Delta_i^n\cap \mL_{n,d,j})|)=\sum_{i\in X}(|\mD \cap(\Delta_i^n\cap \mS_{n,d,j})|) \\
    &\le \sum_{i\in Y}(|\mD \cap(\Delta_i^n\cap \mS_{n,d,j})|) =|\mD \cap \mT_{n,d,j}|.\nonumber
    \end{align}
    Combining \eqref{eq:11} and \eqref{eq:numin_ineq}, we derive that $\mD\cap \mS_{n,d,j}=\mD\cap \mT_{n,d,j}$.

\ref{lem:MDS1.5}-\ref{lem:MDS2} Thanks to the proof of \ref{lem:MDS1}, we know that the inequalities in \eqref{eq:numin_ineq} are in fact  all equalities and so, for each $i\in X$ we have $\mD\cap (\Delta_i^n\cap \mL_{n,d,j})=\Delta_i^n\cap \mL_{n,d,j}$. Furthermore, by the definition of $Y$ and using the fact that the second to last inequality in \eqref{eq:numin_ineq} is an equality, we deduce that $Y=X$ and the proof is complete.  
\end{proof}

\begin{example}\label{exa:MDS-j}
Let $\mD=(0,2,2,3,3,5,6,8)$ be a Ferrers diagram of order $8$ and let $d=4$. It is easy to verify that $(\mD,4)$ is MDS-constructible and $j$-Singleton for $j=1$. We can see that $\mD\cap \mS_{8,4,1}=\mD\cap\mT_{8,4,1}$. Moreover, for every index $2\le i \le 5$ we have that $\mD\cap \Delta_i^8\cap \mT_{8,4,1}\neq 0$   and indeed, for each such index, it holds that $\mD\cap \Delta_i^8\supseteq \Delta_i^8\cap \mL_{8,4,1}$.
\begin{center}
       \begin{tikzpicture}[scale=0.5]
\draw[help lines, very thick, white, fill=blue!10] (0,1) -- (0,-7) -- (8,-7)--(8,1)--(0,1);
\draw[help lines,  thick, red, fill=red!10] (2,-1) -- (7,-1) -- (7,-6)--(6,-6)--(6,-5)--(5,-5)--(5,-4)--(4,-4)--(4,-3)--(3,-3)--(3,-2)--(2,-2)--(2,-1);
\draw[orange,fill=orange]  (1.5,0.5) circle (0.1cm);
\draw[orange,fill=orange]  (2.5,0.5) circle (0.1cm);
\draw[orange,fill=orange] (3.5,0.5) circle (0.1cm);
\draw[orange,fill=orange] (4.5,0.5) circle (0.1cm);
\draw[orange,fill=orange] (5.5,0.5) circle (0.1cm);
\draw[orange,fill=orange] (6.5,0.5) circle (0.1cm);
\draw[orange,fill=orange] (7.5,0.5) circle (0.1cm);

\draw[orange,fill=orange] (1.5,-0.5) circle (0.1cm);
\draw[orange,fill=orange] (2.5,-0.5) circle (0.1cm);
\draw[orange,fill=orange] (3.5,-0.5) circle (0.1cm);
\draw[orange,fill=orange] (4.5,-0.5) circle (0.1cm);
\draw[orange,fill=orange] (5.5,-0.5) circle (0.1cm);
\draw[orange,fill=orange] (6.5,-0.5) circle (0.1cm);
\draw[orange,fill=orange] (7.5,-0.5) circle (0.1cm);

\draw[fill=black] (3.5,-1.5) circle (0.1cm);
\draw[fill=black] (4.5,-1.5) circle (0.1cm);
\draw[fill=black] (5.5,-1.5) circle (0.1cm);
\draw[fill=black] (6.5,-1.5) circle (0.1cm);
\draw[orange,fill=orange] (7.5,-1.5) circle (0.1cm);

\draw[fill=black] (5.5,-2.5) circle (0.1cm);
\draw[fill=black] (6.5,-2.5) circle (0.1cm);
\draw[orange,fill=orange] (7.5,-2.5) circle (0.1cm);

\draw[fill=black] (5.5,-3.5) circle (0.1cm);
\draw[fill=black] (6.5,-3.5) circle (0.1cm);
\draw[orange,fill=orange] (7.5,-3.5) circle (0.1cm);

\draw[fill=black] (6.5,-4.5) circle (0.1cm);
\draw[orange,fill=orange] (7.5,-4.5) circle (0.1cm);

\draw[orange,fill=orange] (7.5,-5.5) circle (0.1cm);

\draw[orange,fill=orange] (7.5,-6.5) circle (0.1cm);
\draw[help lines,  dashed, red, thin] (1,1) -- (8,-6);
\draw[help lines,  dashed, red, thin] (2,1) -- (8,-5);
\draw[help lines,  dashed, red, thin] (3,1) -- (8,-4);
\draw[help lines,  dashed, red, thin] (4,1) -- (8,-3);
\end{tikzpicture}
\end{center}
\end{example}

The following is the main result of this section, which ensures in particular that Conjecture \ref{conj:ES} holds true for every MDS-constructible pair $(\mD,d)$.

\begin{theorem}\label{thm:MDS-constr-column}
    Let $\mD$ be a Ferrers diagram of order $n$ and let $2\leq d \leq n$ be an integer. If $(\mD,d)$ is MDS-constructible, then there exists a  $[\mD,\nu_{\min}(\mD,d),d]_\F$ MFD code over any finite field $\F$.
\end{theorem}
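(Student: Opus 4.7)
The plan is to reduce the MDS-constructible case to the strictly monotone case already handled by \cref{th:strictly-monotone}, and then to transfer the resulting MFD code via \cref{prop:MFD-from-old}. Since $\nu_{\min}(\mD,d)$ is the minimum of the quantities $\nu_j(\mD,d)$ over $j\in\{0,\ldots,d-1\}$, the pair $(\mD,d)$ is automatically $j$-Singleton for at least one such $j$; fix one. Applying \cref{lem:MDScolumn} to this $j$ yields three pieces of structural information that will be decisive: the intersection $\mD\cap \mS_{n,d,j}$ coincides with $\mD\cap \mT_{n,d,j}$, so the ``bulk'' of $\mD$ has an upper-triangular shape; the active diagonals (those with $|\mD\cap\Delta_i^n|\geq d$) are exactly those meeting $\mS_{n,d,j}$; and on every such active diagonal the part lying in $\mL_{n,d,j}$ is already entirely contained in $\mD$.

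Using these constraints, I would next construct a strictly monotone Ferrers diagram $\mD^\star$ of order $n$ whose active diagonals carry the same numbers of elements as those of $\mD$. The candidate column heights can be read off directly from \cref{lem:MDScolumn}: on $\mS_{n,d,j}$ they trace the upper-triangular shape inherited from $\mT_{n,d,j}$, and outside of $\mS_{n,d,j}$ one fills in just enough to force the nonzero heights to increase strictly while still matching the active diagonal counts. By \cref{prop:smon_are_MDSconst}, $\mD^\star$ is then itself MDS-constructible, so $\nu_{\min}(\mD^\star,d)$ reduces to the diagonal sum; matching the active diagonals of $\mD^\star$ to those of $\mD$ gives $\nu_{\min}(\mD^\star,d)=\nu_{\min}(\mD,d)$.

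With such a $\mD^\star$ in place, \cref{th:strictly-monotone} provides a $[\mD^\star,\nu_{\min}(\mD^\star,d),d]_\F$ MFD code $\mC^\star$ over the prescribed finite field $\F$. Finally, \cref{prop:MFD-from-old} is applied to $\mC^\star$ using the matching active-diagonal data of $\mD$ and $\mD^\star$, producing the desired $[\mD,\nu_{\min}(\mD,d),d]_\F$ MFD code. The adjunction symmetry between strictly monotone and initially convex diagrams can be invoked in case the natural choice of $\mD^\star$ turns out to be more comfortably described in the initially convex regime; either way, \cref{th:strictly-monotone} covers it.

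The hardest part will be the combinatorial construction of $\mD^\star$ and checking that \cref{prop:MFD-from-old} truly applies. Simple examples, such as $\mD=(0,2,2,3,3,5,6,8)$ with $d=4$, show that no Ferrers sub-diagram of $\mD$ can be simultaneously strictly monotone and match all of the active diagonal sizes of $\mD$; so $\mD^\star$ will in general have to differ from $\mD$ on some columns outside of $\mS_{n,d,j}$, and the transfer step is not merely an inclusion $\F^{\mD^\star}\hookrightarrow\F^{\mD}$. Verifying that the structural guarantees of \cref{lem:MDScolumn} feed correctly into the hypotheses of \cref{prop:MFD-from-old}, so that the MFD property is preserved under this nontrivial transfer, is the heart of the argument.
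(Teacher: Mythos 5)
Your reduction has a genuine gap at precisely the point you flag as ``the heart of the argument'': the transfer from $\mD^\star$ to $\mD$. The only transfer tool available, \cref{prop:MFD-from-old}, requires the two diagrams to be nested --- either $\mD^\star\subseteq\mD$ with equal $\nu_{\min}$, or $\mD^\star\supseteq\mD$ with $\nu_{\min}(\mD,d)=\nu_{\min}(\mD^\star,d)-|\mD^\star\setminus\mD|$. You correctly observe that a strictly monotone diagram matching the active diagonal sizes of $\mD$ will in general be neither a sub- nor a super-diagram of $\mD$, so ``matching active-diagonal data'' is simply not a hypothesis under which \cref{prop:MFD-from-old} says anything; no mechanism is provided for moving a code between two incomparable supports while preserving both the support condition and the minimum rank. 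In addition, the construction of $\mD^\star$ itself (``fill in just enough to force the nonzero heights to increase strictly while still matching the active diagonal counts'') is not carried out, and it is not clear that such a diagram of order $n$ always exists, since strict monotonicity imposes relations among the diagonal counts.

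The paper's proof avoids the non-nested transfer by a sandwich in which every step is an inclusion. Fixing $j$ with $(\mD,d)$ $j$-Singleton and letting $\ell$ be the smallest active diagonal index, it sets $\mD''=\bigcup_{i=\ell}^n\Delta_i^n$, a copy of $\mT_{n-\ell+1}$, hence strictly monotone, and $\mD'=(\mD\cap\mS_{n,d,j})\cup\bigl(\bigcup_{i=\ell}^n(\Delta_i^n\cap\mL_{n,d,j})\bigr)$. \cref{lem:MDScolumn} guarantees $\mD'\subseteq\mD$ and $\mD'\subseteq\mD''$ with $\mD'\cap\mL_{n,d,j}=\mD''\cap\mL_{n,d,j}$, whence \cref{lem:jSing} makes $(\mD',d)$ $j$-Singleton and a direct count gives $\nu_{\min}(\mD'',d)=\nu_{\min}(\mD',d)+|\mD''\setminus\mD'|$ and $\nu_{\min}(\mD',d)=\nu_{\min}(\mD,d)$. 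Then \cref{prop:MFD-from-old}\ref{it:old2} cuts the MFD code on $\mD''$ down to $\mD'$, and \cref{prop:MFD-from-old}\ref{it:old1} reinterprets it as an MFD code on $\mD$. If you want to rescue your outline, replacing the diagonal-matching $\mD^\star$ by this pair $(\mD',\mD'')$ is the missing idea.
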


Before giving a proof of Theorem \ref{thm:MDS-constr-column}, we recall a useful criterion to construct MFD codes on a given Ferrers diagram from MFD codes on  related Ferrers diagrams. This strategy was first adopted in the proof of \cite[Theorem 7]{etzion2016optimal} and the explicit statement can be found in \cite[Remarks II.12 \& II.14]{antrobus2019maximal}.

\begin{proposition}\label{prop:MFD-from-old}
    Let $\mD$ be a Ferrers diagram of order $n$ and let $2\le d \le n$ be an integer.
    \begin{enumerate}[label=(\arabic*)]
        \item\label{it:old1} Let $\mD'\subseteq \mD$ be a Ferrers diagram of order $n$ such that $\nu_{\min}(\mD,d)=\nu_{\min}(\mD',d)$. If $\C$ is a $[\mD',\nu_{\min}(\mD',d),d]_\F$ MFD code, then it is also a $[\mD,\nu_{\min}(\mD,d),d]_\F$ MFD code.
        \item\label{it:old2} Let $\mD'\supseteq \mD$ be a Ferrers diagram of order $n$ such that $\nu_{\min}(\mD,d)=\nu_{\min}(\mD',d)-|\mD'\setminus\mD|$. If $\C$ is a $[\mD',\nu_{\min}(\mD',d),d]_\F$ MFD code, then $\C\cap \F^{\mD}$ is a $[\mD,\nu_{\min}(\mD,d),d]_\F$ MFD code.
    \end{enumerate}
\end{proposition}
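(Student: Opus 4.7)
The plan for part \ref{it:old1} is essentially a tautology once the right containment is made. Since $\mD' \subseteq \mD$, the space of matrices supported on $\mD'$ sits inside the space supported on $\mD$, that is $\F^{\mD'}\subseteq \F^{\mD}$. Hence $\mC \subseteq \F^{\mD'}\subseteq \F^{\mD}$ becomes automatically an $\F$-subspace of $\F^{\mD}$, whose minimum rank distance is still $d$ (the set of nonzero elements is unchanged) and whose $\F$-dimension equals $\nu_{\min}(\mD',d)$. Invoking the hypothesis $\nu_{\min}(\mD',d)=\nu_{\min}(\mD,d)$, the code $\mC$ meets the bound of \cref{prop:bound_Ferrers} over $\mD$ and is therefore an MFD code on $\mD$.

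The plan for part \ref{it:old2} is to set $\mC'=\mC\cap \F^{\mD}$ and to sandwich $\dim_\F \mC'$ between two matching bounds. Since $\mD\subseteq \mD'$, the space $\F^{\mD}$ is an $\F$-subspace of $\F^{\mD'}$ of codimension $|\mD'\setminus \mD|$. The first step is to notice that $\mC'$ has minimum rank at least $d$, inherited from $\mC$, so that \cref{prop:bound_Ferrers} applied inside $\F^{\mD}$ yields $\dim_\F \mC' \leq \nu_{\min}(\mD,d)$. The second step is to lower-bound $\dim_\F \mC'$ by applying Grassmann's formula inside the ambient $|\mD'|$-dimensional space $\F^{\mD'}$:
\[
\dim_\F \mC' \;\geq\; \dim_\F \mC + \dim_\F \F^{\mD} - \dim_\F \F^{\mD'} \;=\; \nu_{\min}(\mD',d) + |\mD| - |\mD'|,
\]
which by the hypothesis $\nu_{\min}(\mD,d)=\nu_{\min}(\mD',d)-|\mD'\setminus \mD|$ is exactly $\nu_{\min}(\mD,d)$. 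Combining the two inequalities forces $\dim_\F \mC' = \nu_{\min}(\mD,d)$, so that $\mC'$ meets the bound of \cref{prop:bound_Ferrers} and is an MFD code.

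There is no real obstacle here: both items follow from the linear-algebraic behaviour of the supports combined with the Singleton-like bound. The only mild care needed is that \cref{prop:bound_Ferrers} is phrased for a code with a prescribed minimum rank $d$; since the minimum rank of $\mC'$ in part \ref{it:old2} is a priori only known to be at least $d$, one has to observe that applying the bound at the value $d$ still furnishes a valid upper estimate on $\dim_\F \mC'$, which is all that is used in the argument.
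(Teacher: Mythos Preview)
Your argument is correct. The paper does not actually prove \cref{prop:MFD-from-old}; it merely cites the result from \cite[Theorem~7]{etzion2016optimal} and \cite[Remarks~II.12 \&~II.14]{antrobus2019maximal}, so there is no ``paper's own proof'' to compare against. Your derivation---the trivial inclusion for part~\ref{it:old1} and the Grassmann/codimension estimate for part~\ref{it:old2}---is exactly the standard reasoning behind those references, and you have correctly flagged the only delicate point, namely that the minimum rank of $\mC'$ is a priori only $\geq d$; since $\nu_{\min}(\mD,\cdot)$ is nonincreasing in $d$, the upper bound $\dim_\F\mC'\le\nu_{\min}(\mD,d)$ remains valid, and equality of the two bounds then forces the claimed dimension.
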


We give here the proof of \cref{thm:MDS-constr-column} and we illustrate its steps explicitly in \cref{ex:Th17}.

\begin{proof}[Proof of \cref{thm:MDS-constr-column}]
  Let $\F$ be a finite field, let $(\mD,d)$ be an MDS-constructible pair and let $j \in \{0,\ldots,d-1\}$ be such that $(\mD,d)$ is $j$-Singleton.
 Define
     $$ 
     Y=\{i\in [n-d+1] \,: \, \mD\cap\Delta_i^n\cap \mS_{n,d,j} \neq \emptyset \}
     $$
     and note that, as a consequence of \eqref{eq:MDS-smallsum} and \cref{lem:MDScolumn}\ref{lem:MDS1.5}, one has 
     \begin{align*}
     \nu_{\min}(\mD,d)=\sum_{i=1}^{n}\max\{0,|\mD\cap\Delta_i^n|-d+1\}& =\sum_{i=1}^{n-d+1}\max\{0,|\mD\cap\Delta_i^n|-d+1\}\\ &=\sum_{i\in Y}\max\{0,|\mD\cap\Delta_i^n|-d+1\}.
     \end{align*}
     If $Y=\emptyset$, then $\nu_{\min}(\mD,d)=0$ and there is nothing to prove. Hence, assume $Y\neq\emptyset$. By the definition of $Y$, there exists $\ell \in [n-d+1]$ such that $Y=\{ i \,:\, \ell \le i \le n-d+1\}$. Fix such $\ell$ and define \rev{the following new Ferrers diagram}
     $$\mD'=(\mD\cap \mS_{n,d,j}) \cup \Big(\bigcup_{i=\ell}^n(\Delta_i^n\cap \mL_{n,d,j})\Big).$$
     By \cref{lem:MDScolumn}\ref{lem:MDS2}, for every $\ell \le i\le n-d+1$  we have that $\Delta_i^n\cap \mL_{n,d,j}\subseteq \mD$ while the same is satisfied for $i\geq n-d+2$ thanks to the fact that \rev{$\mD$ is a Ferrers diagram}. It follows in particular that $\mD'\subseteq \mD$. Moreover, it also holds that  $$\mD'\subseteq \mD''=\bigcup_{i=\ell}^n\Delta_i^n.$$
     The Ferrers diagram $\mD''$ is a copy of $\mT_{n-\ell+1}$ (considered as embedded into $[n]^2$) and hence it is strictly monotone and  $(\mD'',d)$ is $j'$-Singleton for every $j'$. In particular, it is $j$-Singleton. Furthermore, $\mD'$ and $\mD''$ have the same intersection with $\mL_{n,d,j}$. 
     Thus, Lemma \ref{lem:jSing} yields that $(\mD',d)$ is also $j$-Singleton.
     Since $\mD''$ is strictly monotone, by Theorem \ref{th:strictly-monotone} there exists a $[\mD'',\nu_{\min}(\mD'',d),d]_\F$ MFD code $\C''$, which we fix. Since $(\mD',d)$ and $(\mD'',d)$ are both $j$-Singleton and $\mD'\cap \mL_{n,d,j}=\mD''\cap \mL_{n,d,j}$, we compute
     \begin{align*}\nu_{\min}(\mD'',d)&=|\mD''\cap \mS_{n,d,j}|=|(\mD'\cap \mS_{n,d,j})|+|(\mD''\setminus\mD')\cap \mS_{n,d,j}|\\ &=\nu_{\min}(\mD',d)+|(\mD''\setminus\mD')\cap \mS_{n,d,j}|=\nu_{\min}(\mD',d)+|\mD''\setminus\mD'|.\end{align*}
    \cref{prop:MFD-from-old}\ref{it:old2} yields that $\C'=\C''\cap \F^{\mD'}$ is a $[\mD',\nu_{\min}(\mD',d),d]_\F$ MFD code. Finally, we have
    $$\nu_{\min}(\mD',d)=|\mD'\cap \mS_{n,d,j}|=|\mD\cap \mS_{n,d,j}|=\nu_{\min}(\mD,d)$$
    and $\mD'\subseteq \mD$. By Proposition \ref{prop:MFD-from-old}\ref{it:old1}, the code $\C'$ is thus also a $[\mD,\nu_{\min}(\mD,d),d]_{\F}$ MFD code.
     \end{proof}

  \begin{example}\label{ex:Th17}
      Let $\mD=(0,2,2,3,3,5,6,8)$ be the Ferrers Diagram from \cref{exa:MDS-j} and let $\F$ be an arbitrary finite field. In this case $(\mD,4)$ is $1$-Singleton and $\nu_{\min}(\mD,4)=\nu_1(\mD,4)=9$. In order to construct a $[\mD,9,4]_{\F}$ MFD code, we follow the proof of Theorem \ref{thm:MDS-constr-column} step by step. With the terminology introduced there and as illustrated below, we have
      $$Y=\{i\in[5] \,: \, \mD\cap\Delta_i^n\cap \mS_{n,d,j} \neq \emptyset \}=\{2,3,4,5\},$$
      and 
      $$\mD'=(\mD\cap \mS_{8,4,1}) \cup \Big(\bigcup_{i=2}^n(\Delta_i^n\cap \mL_{8,4,1})\Big)=(0,1,2,3,3,5,6,7).$$
      Furthermore,
      we have $\mD''=(0,1,2,3,4,5,6,7)$, as given in the third picture below. 
      \begin{center}
       \begin{tikzpicture}[scale=0.5]
\draw[help lines, very thick, white, fill=blue!10] (0,1) -- (0,-7) -- (8,-7)--(8,1)--(0,1);
\draw[help lines,  thick, red, fill=red!10] (2,-1) -- (7,-1) -- (7,-6)--(6,-6)--(6,-5)--(5,-5)--(5,-4)--(4,-4)--(4,-3)--(3,-3)--(3,-2)--(2,-2)--(2,-1);

\draw[orange,fill=orange]  (1.5,0.5) circle (0.1cm);
\draw[orange,fill=orange]  (2.5,0.5) circle (0.1cm);
\draw[orange,fill=orange] (3.5,0.5) circle (0.1cm);
\draw[orange,fill=orange] (4.5,0.5) circle (0.1cm);
\draw[orange,fill=orange] (5.5,0.5) circle (0.1cm);
\draw[orange,fill=orange] (6.5,0.5) circle (0.1cm);
\draw[orange,fill=orange] (7.5,0.5) circle (0.1cm);

\draw[orange,fill=orange] (1.5,-0.5) circle (0.1cm);
\draw[orange,fill=orange] (2.5,-0.5) circle (0.1cm);
\draw[orange,fill=orange] (3.5,-0.5) circle (0.1cm);
\draw[orange,fill=orange] (4.5,-0.5) circle (0.1cm);
\draw[orange,fill=orange] (5.5,-0.5) circle (0.1cm);
\draw[orange,fill=orange] (6.5,-0.5) circle (0.1cm);
\draw[orange,fill=orange] (7.5,-0.5) circle (0.1cm);

\draw[fill=black] (3.5,-1.5) circle (0.1cm);
\draw[fill=black] (4.5,-1.5) circle (0.1cm);
\draw[fill=black] (5.5,-1.5) circle (0.1cm);
\draw[fill=black] (6.5,-1.5) circle (0.1cm);
\draw[orange,fill=orange] (7.5,-1.5) circle (0.1cm);

\draw[fill=black] (5.5,-2.5) circle (0.1cm);
\draw[fill=black] (6.5,-2.5) circle (0.1cm);
\draw[orange,fill=orange] (7.5,-2.5) circle (0.1cm);

\draw[fill=black] (5.5,-3.5) circle (0.1cm);
\draw[fill=black] (6.5,-3.5) circle (0.1cm);
\draw[orange,fill=orange] (7.5,-3.5) circle (0.1cm);

\draw[fill=black] (6.5,-4.5) circle (0.1cm);
\draw[orange,fill=orange] (7.5,-4.5) circle (0.1cm);

\draw[orange,fill=orange] (7.5,-5.5) circle (0.1cm);

\draw[orange,fill=orange] (7.5,-6.5) circle (0.1cm);
\draw[help lines,  dashed, red, thin] (1,1) -- (8,-6);
\draw[help lines,  dashed, red, thin] (2,1) -- (8,-5);
\draw[help lines,  dashed, red, thin] (3,1) -- (8,-4);
\draw[help lines,  dashed, red, thin] (4,1) -- (8,-3);
\end{tikzpicture} \qquad 
       \begin{tikzpicture}[scale=0.5]
\draw[help lines, very thick, white, fill=blue!10] (0,1) -- (0,-7) -- (8,-7)--(8,1)--(0,1);

\draw[orange,fill=orange]  (1.5,0.5) circle (0.1cm);
\draw[orange,fill=orange]  (2.5,0.5) circle (0.1cm);
\draw[orange,fill=orange] (3.5,0.5) circle (0.1cm);
\draw[orange,fill=orange] (4.5,0.5) circle (0.1cm);
\draw[orange,fill=orange] (5.5,0.5) circle (0.1cm);
\draw[orange,fill=orange] (6.5,0.5) circle (0.1cm);
\draw[orange,fill=orange] (7.5,0.5) circle (0.1cm);

\draw[orange,fill=orange] (2.5,-0.5) circle (0.1cm);
\draw[orange,fill=orange] (3.5,-0.5) circle (0.1cm);
\draw[orange,fill=orange] (4.5,-0.5) circle (0.1cm);
\draw[orange,fill=orange] (5.5,-0.5) circle (0.1cm);
\draw[orange,fill=orange] (6.5,-0.5) circle (0.1cm);
\draw[orange,fill=orange] (7.5,-0.5) circle (0.1cm);

\draw[fill=black] (3.5,-1.5) circle (0.1cm);
\draw[fill=black] (4.5,-1.5) circle (0.1cm);
\draw[fill=black] (5.5,-1.5) circle (0.1cm);
\draw[fill=black] (6.5,-1.5) circle (0.1cm);
\draw[orange,fill=orange] (7.5,-1.5) circle (0.1cm);

\draw[fill=black] (5.5,-2.5) circle (0.1cm);
\draw[fill=black] (6.5,-2.5) circle (0.1cm);
\draw[orange,fill=orange] (7.5,-2.5) circle (0.1cm);

\draw[fill=black] (5.5,-3.5) circle (0.1cm);
\draw[fill=black] (6.5,-3.5) circle (0.1cm);
\draw[orange,fill=orange] (7.5,-3.5) circle (0.1cm);

\draw[fill=black] (6.5,-4.5) circle (0.1cm);
\draw[orange,fill=orange] (7.5,-4.5) circle (0.1cm);

\draw[orange,fill=orange] (7.5,-5.5) circle (0.1cm);

\draw[help lines,  dashed, red, thin] (1,1) -- (8,-6);
\draw[help lines,  dashed, red, thin] (2,1) -- (8,-5);
\draw[help lines,  dashed, red, thin] (3,1) -- (8,-4);
\draw[help lines,  dashed, red, thin] (4,1) -- (8,-3);
\end{tikzpicture} \qquad 
       \begin{tikzpicture}[scale=0.5]
\draw[help lines, very thick, white, fill=blue!10] (0,1) -- (0,-7) -- (8,-7)--(8,1)--(0,1);
\draw[orange,fill=orange]  (1.5,0.5) circle (0.1cm);
\draw[orange,fill=orange]  (2.5,0.5) circle (0.1cm);
\draw[orange,fill=orange] (3.5,0.5) circle (0.1cm);
\draw[orange,fill=orange] (4.5,0.5) circle (0.1cm);
\draw[orange,fill=orange] (5.5,0.5) circle (0.1cm);
\draw[orange,fill=orange] (6.5,0.5) circle (0.1cm);
\draw[orange,fill=orange] (7.5,0.5) circle (0.1cm);

\draw[orange,fill=orange] (2.5,-0.5) circle (0.1cm);
\draw[orange,fill=orange] (3.5,-0.5) circle (0.1cm);
\draw[orange,fill=orange] (4.5,-0.5) circle (0.1cm);
\draw[orange,fill=orange] (5.5,-0.5) circle (0.1cm);
\draw[orange,fill=orange] (6.5,-0.5) circle (0.1cm);
\draw[orange,fill=orange] (7.5,-0.5) circle (0.1cm);

\draw[fill=black] (3.5,-1.5) circle (0.1cm);
\draw[fill=black] (4.5,-1.5) circle (0.1cm);
\draw[fill=black] (5.5,-1.5) circle (0.1cm);
\draw[fill=black] (6.5,-1.5) circle (0.1cm);
\draw[orange,fill=orange] (7.5,-1.5) circle (0.1cm);

\draw[red, fill=red] (4.5,-2.5) circle (0.17cm);
\draw[fill=black] (5.5,-2.5) circle (0.1cm);
\draw[fill=black] (6.5,-2.5) circle (0.1cm);
\draw[orange,fill=orange] (7.5,-2.5) circle (0.1cm);

\draw[fill=black] (5.5,-3.5) circle (0.1cm);
\draw[fill=black] (6.5,-3.5) circle (0.1cm);
\draw[orange,fill=orange] (7.5,-3.5) circle (0.1cm);

\draw[fill=black] (6.5,-4.5) circle (0.1cm);
\draw[orange,fill=orange] (7.5,-4.5) circle (0.1cm);

\draw[orange,fill=orange] (7.5,-5.5) circle (0.1cm);

\draw[help lines,  dashed, red, thin] (1,1) -- (8,-6);
\draw[help lines,  dashed, red, thin] (2,1) -- (8,-5);
\draw[help lines,  dashed, red, thin] (3,1) -- (8,-4);
\draw[help lines,  dashed, red, thin] (4,1) -- (8,-3);
\end{tikzpicture}
      \end{center}
      It is easy to verify that $\nu_{\min}(\mD',4)=\nu_{\min}(\mD,4)=9$ and $\nu_{\min}(\mD'',4)=10=\nu_{\min}(\mD',4)+|\mD''\setminus\mD'|$. 
    Theorem \ref{th:strictly-monotone} ensures the existence of a $[\mD'',10,4]_{\F}$ MFD code $\C''$. Then, by Proposition  \ref{prop:MFD-from-old}\ref{it:old2}, the code $\C'=\C''\cap \F^{\mD'}$  is a $[\mD',9,4]_{\F}$ MFD code and it is also a $[\mD,9,4]_{\F}$ MFD code thanks to  Proposition \ref{prop:MFD-from-old}\ref{it:old1}.
  \end{example}

\end{document}